\newtheorem{definition}{Definition}[section]
\newtheorem{theorem}[definition]{Theorem}
\newtheorem{corollary}[definition]{Corollary}
\newtheorem{lemma}[definition]{Lemma}
\newtheorem{remark}[definition]{Remark}
\newtheorem{example}[definition]{Example}
\newcommand{\defas}{:=}
\newcommand{\ind}{\chi}
\newcommand{\nlOp}{\mathcal{L}}
\newcommand{\nlDom}{\Omega}
\newcommand{\nlBound}{\mathcal{I}}
\newcommand{\completeDom}{\nlDom \cup \nlBound}
\newcommand{\varOp}{A}
\newcommand{\varForce}{F}
\newcommand{\xb}{\mathbf{x}}
\newcommand{\yb}{\mathbf{y}}
\newcommand{\Vb}{\mathbf{V}}
\newcommand{\nb}{\mathbf{n}}
\newcommand{\Ub}{\mathbf{U}}
\newcommand{\Tb}{\mathbf{T}}
\newcommand{\kernel}{\gamma}
\newcommand{\kernelt}{\gamma^t}
\newcommand{\R}{\mathbb{R}}
\newcommand{\Rd}{\mathbb{R}^d}
\newcommand{\Hc}{\mathcal{H}}
\newcommand{\advar}{v}
\newcommand{\trialSpace}{V}
\newcommand{\testSpace}{V_c}
\newcommand{\weakSol}{u}
\newcommand{\transInvKernel}{J}
\newcommand{\grad}{\nabla}
\newcommand{\energyFunc}{E}
\newcommand{\AAMLagFunc}{G}
\newcommand{\objFunc}{J}
\newcommand{\data}{\bar{u}}
\newcommand{\g}{\gamma}
\newcommand{\shape}{\Gamma}
\newcommand{\shapet}{\shape^t}
\newcommand{\shapeFun}{J}
\newcommand{\shapeSpace}{\mathcal{A}}
\newcommand{\xt}{\xi^t}
\newcommand{\ballxy}{\chi_{B_\varepsilon(\xb)}(\yb)}
\def \N{\mathbb{N}}
\newcommand{\transformationMatrix}{\mathcal{B}}
\DeclareMathOperator{\di}{div}
\DeclareMathOperator{\tr}{trace}
\DeclareMathOperator{\supp}{supp}
\DeclareMathOperator{\dist}{dist}
\newcommand{\Ftb}{\mathbf{F}_{\mathbf{t}}}
\newcommand{\Ftzero}{\mathbf{F}_{0}}
\newcommand{\LtNSpace}{H}
\newcommand{\redFunc}{\objFunc^{red}}
\newcommand{\Ftbtilde}{\widetilde{\mathbf{F}}_t}
\newcommand{\vecfieldsspecific}{C_0^1(\completeDom,\R^d)}
\newcommand{\LtNOp}{\varOp^{LtN}}
\newcommand{\interfaceOp}{\LtNOp_{\shape}}
\newcommand{\interfaceOpt}{\LtNOp_{\shapet}}
\newcommand{\interfaceForce}{\varForce_{\shape}}
\newcommand{\shapeInnerProd}{b}
\newcommand{\lagrangian}{L}
\newcommand{\Id}{{\bf Id}}
\title{\textbf{Interface Identification constrained by Local-to-Nonlocal Coupling}}
\author{Matthias Schuster\thanks{Universitaet Trier, D-54286 Trier, Germany; Email: schusterm@uni-trier.de, volker.schulz@uni-trier.de}\hspace{2mm}\href{https://orcid.org/0000-0002-9355-1076}{\includegraphics[scale=0.06]{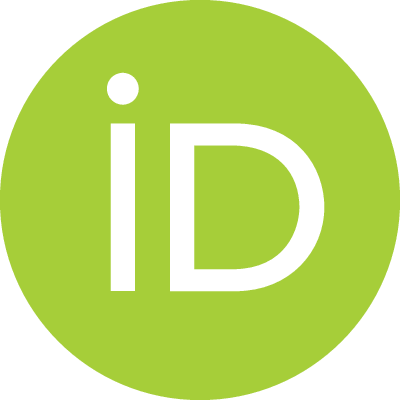}} \and Volker Schulz\footnotemark[1]\hspace{2mm}\href{https://orcid.org/0000-0001-7665-130X}{\includegraphics[scale=0.06]{orcid.eps}} }
\date{}
\begin{document}
	\maketitle
	\small
	\textbf{Abstract.}
	Models of physical phenomena that use nonlocal operators are better suited for some applications than their classical counterparts that employ partial differential operators. However, the numerical solution of these nonlocal problems can be quite expensive. Therefore, Local-to-Nonlocal couplings have emerged that combine partial differential operators with nonlocal operators. In this work, we make use of an energy-based Local-to-Nonlocal coupling that serves as a constraint for an interface identification problem.
	~\\
	\textbf{Keywords.} Shape optimization, energy-based Local-to-Nonlocal coupling, interface identification, Schwarz method.
	\normalsize
	\section{Introduction}
	Nonlocal models have been successfully applied to a broad range of applications, e.g. anomalous or fractional diffusion\cite{brockmann2008,suzuki2022,deliaAnomalousTransport}, peridynamics\cite{silling2000reformulation, javili2019peridynamics}, finance and jump processes\cite{metzler2000random, delia2017nonlocal, capodaglio_energy_coupling, glusa2023asymptotically}, machine learning\cite{de2023machine, you2020data} and image denoising\cite{buades2010image, delia2021bilevel}.
	Typically, nonlocal operators are integral operators that are dependent on a function $\kernel$, which is called \emph{kernel}. As a result, nonlocal operators allow interactions between different points in space and therefore some physical phenomena are better described in this nonlocal regime. One drawback is, that the computation of solutions regarding nonlocal equations can be quite costly. If, e.g. we apply the finite element method we need to assemble a stiffness matrix that is typically dense for nonlocal operators. Additionally, the weak formulation, as we see in Section \ref{chap:Dir_Prob}, consists of double integrals, which are computationally more demanding than single integrals that typically represent weak formulations of PDEs. Since these stiffness matrices are sparse for PDEs and some physical processes can also be modeled by PDEs, the coupling of nonlocal and 'local' partial differential operators is one possibility to develop a model that is accurate but at the same time computationally not too expensive.
	For more information on these Local-to-Nonlocal couplings we refer to the review paper \cite{coupling_review} and the references therein.
	As a result, the question arises, on which part of the domain the nonlocal operator should be deployed, which leads us to shape optimization and specifically to an interface identification problem as a first step to model this problem. An overview on the topic of shape optimization can be found in, e.g. \cite{sokolowski_Introduction, shapes_geometries, allaire_shape, henrot_shape}, and a similar interface identification problem constrained by partial differential equations is investigated in \cite{welker_diss}. For an analysis of the interface identification problem constrained by completely nonlocal equations we refer to \cite{shape_paper}.\\
	The paper is organized as follows: First, we recall nonlocal Dirichlet problems and their well-posedness of the corresponding weak formulation. A nonlocal Dirichlet problem is part of the energy-based Local-to-Nonlocal(LtN) coupling that is outlined in Section \ref{section:LtN}, which is followed by a small subsection on how to solve this LtN coupling with Schwarz methods. After that, we introduce an interface identification problem that is in our case constrained by the energy-based LtN coupling of Section \ref{section:interface_problem_formulation}. Chapter \ref{section:shape_opt} is then dedicated to shape optimization and contains basic notions of shape optimization as well as the development of the shape derivative for the reduced objective functional corresponding to the interface identification problem from Section \ref{section:interface_problem_formulation}. Additionally, we describe a well-known shape optimization algorithm in Chapter \ref{section:shape_opt_algorithm}, which we apply in Section \ref{section:num_ex} to solve the interface identification problem constrained by an energy-based LtN coupling for two numerical examples.	
	
	\section{Nonlocal Terminology and Framework}
	\label{chap:Dir_Prob}
	Let $\nlDom \subset \Rd$ be an open, connected and bounded domain.
	Moreover, we denote by $\kernel:\Rd \times \Rd \rightarrow [0,\infty)$ a nonnegative \emph{(interaction) kernel} and we let the \emph{nonlocal boundary} $\nlBound$ of $\Omega$ be defined in the following way 
	\begin{align}
		\label{def:nlBound}
		\nlBound \defas \{\yb \in \Rd \setminus \nlDom: \int_{\nlDom} \kernel(\xb, \yb) + \kernel(\yb, \xb) ~d\xb > 0 \}.   
	\end{align}
	As in \cite{DuAnalysis, vollmann_diss}, the kernel $\kernel$ is supposed to fulfill the next two requirements.
	\begin{itemize}
		\item[(K1)] There exist $\delta \in (0,\infty)$ and $\phi:\Rd \times \Rd \rightarrow [0,\infty )$ with $$\kernel(\xb,\yb) = \phi(\xb,\yb) \ind_{B_\delta(\xb)}(\yb).$$ 
		\item[(K2)] There exist $\kernel_0 \in (0, \infty)$ and $\epsilon \in (0, \delta)$ such that
		\begin{align*}
			\kernel_0 \leq \kernel(\xb,\yb) \text{ for } \yb \in B_{\epsilon}(\xb) \text{ and } \xb \in \nlDom.
		\end{align*} 
	\end{itemize}
	Then, we call an operator $\nlOp$ of the form
	\begin{align}
		\label{op:conv-diff}
		-\nlOp u(\xb) = \int\limits_{\Rd} \left(u(\xb)\kernel(\xb,\yb) - u(\yb)\kernel(\yb,\xb) \right) ~d\yb
	\end{align}
	a \emph{nonlocal convection-diffusion operator}. 
	With an appropriate forcing term $f$ and suitable boundary data $g$, which will both be specified later, we are now able to formulate a \emph{nonhomogeneous steady-state Dirichlet problem with boundary constraints} as
	\begin{align}
		\begin{split}
			\label{nlProblem}
			-\nlOp u &= f \text{ on } \nlDom\\
			u &= g \text{ on } \nlBound.
		\end{split}
	\end{align}
	Similar to the case of partial differential equations, the theory of nonlocal problems focuses on finding a weak solution to \eqref{nlProblem}. In order to derive such a weak formulation we multiply the first equation of  \eqref{nlProblem} by a test function $\advar: \nlDom \cup \nlBound \rightarrow \R$ with $\advar = 0$ on $\nlBound$ and then integrate over $\nlDom$, which results in
	\begin{align}
		\left(-\nlOp u, \advar \right)_{L^2(\nlDom)} &= \left(f, \advar \right)_{L^2(\nlDom)} \nonumber \\
		\Leftrightarrow \int_{\nlDom} \advar(\xb)\int_{\Rd} (u(\xb)\kernel(\xb,\yb) &- u(\yb)\kernel(\yb,\xb)) ~d\yb d\xb = \int_{\nlDom} f(\xb) \advar(\xb) ~d\xb. \label{firstVarForm}
	\end{align}
	Making use of Fubini’s theorem and applying $\advar(\yb) = 0$ for $\yb \in \nlBound$ yields an equivalent representation for the integral on the left-hand side
	\begin{align*}
		&\int_{\nlDom} \advar(\xb)\int_{\completeDom} (u(\xb)\kernel(\xb,\yb) - u(\yb)\kernel(\yb,\xb)) ~d\yb d\xb\\ 
		&= \frac{1}{2} \iint_{(\completeDom)^2} (\advar(\xb) - \advar(\yb)) (u(\xb)\kernel(\xb,\yb) - u(\yb)\kernel(\yb,\xb)) ~d\yb d\xb.
	\end{align*}
	As a next step, we define the bilinear operator $\varOp$ and the linear functional $\varForce$ as follows
	\begin{align*}
		\varOp(u,v) &\defas \frac{1}{2} \iint_{(\completeDom)^2} (\advar(\xb) - \advar(\yb)) (u(\xb)\kernel(\xb,\yb) - u(\yb)\kernel(\yb,\xb)) ~d\yb d\xb \text{ and} \\
		\varForce(v) &\defas \int_{\nlDom} f(\xb) \advar(\xb) ~d\xb.
	\end{align*}
	Then, we can introduce a (semi-)norm
	\begin{align}
		\begin{split} \label{norm}
			|u|_{\trialSpace(\completeDom)} &\defas \sqrt{\varOp(u,u)} \text{ and a norm } \\ 
			||u||_{\trialSpace(\completeDom)} &\defas |u|_{\trialSpace(\completeDom)} + ||u||_{L^2(\completeDom)},
		\end{split}
	\end{align}
	which we employ to define the following nonlocal energy spaces
	\begin{align*}
		\trialSpace(\completeDom) &\defas \{u \in L^2(\completeDom): ||u||_{\trialSpace(\completeDom)} < \infty\} \text{ and} \\
		\testSpace(\completeDom) &\defas \{u \in \trialSpace(\completeDom): u = 0 \text{ on } \nlBound\}.
	\end{align*}
	With these definitions we can now formulate a \emph{variational} or \emph{weak} formulation of problem \eqref{nlProblem}:
	\begin{definition}
		Given $f \in L^2(\nlDom)$ and $g \in \trialSpace(\completeDom)$, if $\weakSol \in \trialSpace(\completeDom)$ solves
		\begin{align}\label{weak_formulation}
			\begin{split}
				\varOp(\weakSol, \advar) &= \varForce(\advar) \text{ for all } \advar \in \testSpace(\completeDom) \text{ and} \\
				u&-g \in \testSpace(\completeDom),
			\end{split}	
		\end{align}
		then $u$ is called weak solution of \eqref{nlProblem}.
	\end{definition}
	Since $\varOp(u,\advar) = \varOp(u - g,\advar) + \varOp(g,\advar)$ and $u-g \in \testSpace(\completeDom)$, we are able to rephrase problem \ref{weak_formulation} as a homogeneous Dirichlet problem
	\begin{align}
		\label{weak_formulation_homogeneous}
		\begin{split}
			\textit{Given } f &\in L^2(\nlDom) \textit{ and } g \in \trialSpace(\completeDom) \textit{, find } \tilde{\weakSol} \in \testSpace(\completeDom) \textit{ s.t.}\\
			\varOp(\tilde{\weakSol}, \advar) &= \varForce(\advar) - \varOp(g,\advar) \textit{ for all } \advar \in \testSpace(\completeDom),\\
			\textit{then } \weakSol &\defas \tilde{\weakSol} + g \in \trialSpace(\completeDom) \textit{ is a solution of \eqref{weak_formulation}.}
		\end{split}
	\end{align} 
	\begin{example}
		\label{remark:examples_well_posedness}
		Two popular classes of symmetric kernels, where the well-posedness of \eqref{weak_formulation} is shown in, e.g. \cite{DuAnalysis, vollmann_diss}, are:
		\begin{itemize}
			\item Integrable kernels:\\
			There exist constants $\kernel_1, \kernel_2 \in  (0, \infty)$ with
			\begin{align*}
				\kernel_1 \leq \inf_{\xb \in \nlDom} \int_{\completeDom} \kernel(\xb,\yb) ~d\yb \text{ and } \sup_{\xb \in \nlDom} \int_{\completeDom} \left( \kernel(\xb,\yb) \right)^2 ~d\yb \leq \left( \kernel_2 \right)^2.
			\end{align*}
			Then the solution $u \in \trialSpace(\completeDom)$ for \eqref{weak_formulation} exists, is unique and in this case the spaces $\left(\testSpace(\completeDom), |\cdot|_{\trialSpace(\completeDom)} \right)$ and $\left( L_c^2(\completeDom), ||\cdot||_{L^2(\completeDom)} \right)$ as well as $\left(\trialSpace(\completeDom), ||\cdot||_{\trialSpace(\completeDom)} \right)$ and $\left( L^2(\completeDom), ||\cdot||_{L^2(\completeDom)} \right)$ are equivalent.
			\item Singular symmetric kernels:\\
			There exist constants $\kernel_1, \kernel_2 \in (0, \infty)$ and $s \in (0,1)$ with
			\begin{align*}
				\kernel_1 < \kernel(\xb,\yb)||\xb - \yb||_2^{d+2s} < \kernel_2 \text{ for all } \yb \in B_{\delta}(\xb) \text{ and } \xb \in \nlDom.
			\end{align*}
			Then the solution to \eqref{weak_formulation} exists, is unique and the spaces $\left(\testSpace(\completeDom), |\cdot|_{\trialSpace(\completeDom)} \right)$ and \\ $\left( H^s_c(\completeDom), |\cdot|_{H^s(\completeDom)} \right)$ as well as $\left(\trialSpace(\completeDom), ||\cdot||_{\trialSpace(\completeDom)} \right)$ and $\left(H^s(\completeDom), ||\cdot||_{H^s(\completeDom)} \right)$ are equivalent.
		\end{itemize}
		In the coupling formulation, that is presented in the Chapter \ref{section:LtN}, only integrable kernels with some additional assumptions, which will be specified later, are considered.
	\end{example}
	In the next Chapter, we present a Local-to-Nonlocal coupling, where we will observe a nonlocal Dirichlet problem on one subdomain. Additionally, we recapture under which assumptions this particular coupling is well-posed(see Lemma \ref{lemma:AcostaNormEquivalence} and Corollary \ref{cor:LtN_well_posedness}).

	\section{Interface Identification constrained by an energy-based Local-to-Nonlocal Coupling}
	\subsection{Energy-based Local-to-Nonlocal Coupling}
	\label{section:LtN}
	In this section we introduce a specific coupling of the Laplacian operator on one domain and a nonlocal operator on the other domain, which was first formulated in \cite{Acosta_coupling, AcostaDD}.\\ 
	Here, for any two sets $\hat{\nlDom}_1, \hat{\nlDom}_2 \subset \Rd$ we define the distance of those two domains as 
	\begin{align*}
		\dist(\hat{\nlDom}_1, \hat{\nlDom}_2) \defas \inf\limits_{\xb \in \hat{\nlDom}_1,~ \yb \in \hat{\nlDom}_2} ||\xb - \yb||_2.
	\end{align*}
	If $\hat{\nlDom}_1$ only consists of one element, i.e. there exists one $\xb \in \Rd$ such that $\hat{\nlDom}_1 = \{\xb\}$, we also use the short notation $\dist(\xb,\hat{\nlDom}_2) \defas \dist(\{\xb\},\hat{\nlDom}_2)$.\\
	From now on, the set $\nlDom$ is assumed to be an open and bounded Lipschitz domain and can be described as $\nlDom = \left(\overline{\nlDom}_l \cup \overline{\nlDom}_{nl}\right)^\circ$, where $\nlDom_{l}$ and $\nlDom_{nl}$ are open, bounded, connected, non-empty and disjoint sets. Moreover, $\nlDom_l$ is also supposed to be a Lipschitz domain and, since trivial couplings should be excluded, $\dist(\nlDom_l, \nlDom_{nl}) < \delta$ is assumed to hold. An example configuration is shown in Picture \ref{fig:LtN_setting}. From now on, we will refer to $\nlDom_{l}$ as the 'local' and to $\nlDom_{nl}$ as the 'nonlocal' domain.
	\begin{figure}[h!]
		\centering
		\def\svgwidth{0.35\textwidth}
		{\small 
\begingroup%
  \makeatletter%
  \providecommand\color[2][]{%
    \errmessage{(Inkscape) Color is used for the text in Inkscape, but the package 'color.sty' is not loaded}%
    \renewcommand\color[2][]{}%
  }%
  \providecommand\transparent[1]{%
    \errmessage{(Inkscape) Transparency is used (non-zero) for the text in Inkscape, but the package 'transparent.sty' is not loaded}%
    \renewcommand\transparent[1]{}%
  }%
  \providecommand\rotatebox[2]{#2}%
  \newcommand*\fsize{\dimexpr\f@size pt\relax}%
  \newcommand*\lineheight[1]{\fontsize{\fsize}{#1\fsize}\selectfont}%
  \ifx\svgwidth\undefined%
    \setlength{\unitlength}{425.97277063bp}%
    \ifx\svgscale\undefined%
      \relax%
    \else%
      \setlength{\unitlength}{\unitlength * \real{\svgscale}}%
    \fi%
  \else%
    \setlength{\unitlength}{\svgwidth}%
  \fi%
  \global\let\svgwidth\undefined%
  \global\let\svgscale\undefined%
  \makeatother%
  \begin{picture}(1,0.99973671)%
    \lineheight{1}%
    \setlength\tabcolsep{0pt}%
    \put(0,0){\includegraphics[width=\unitlength,page=1]{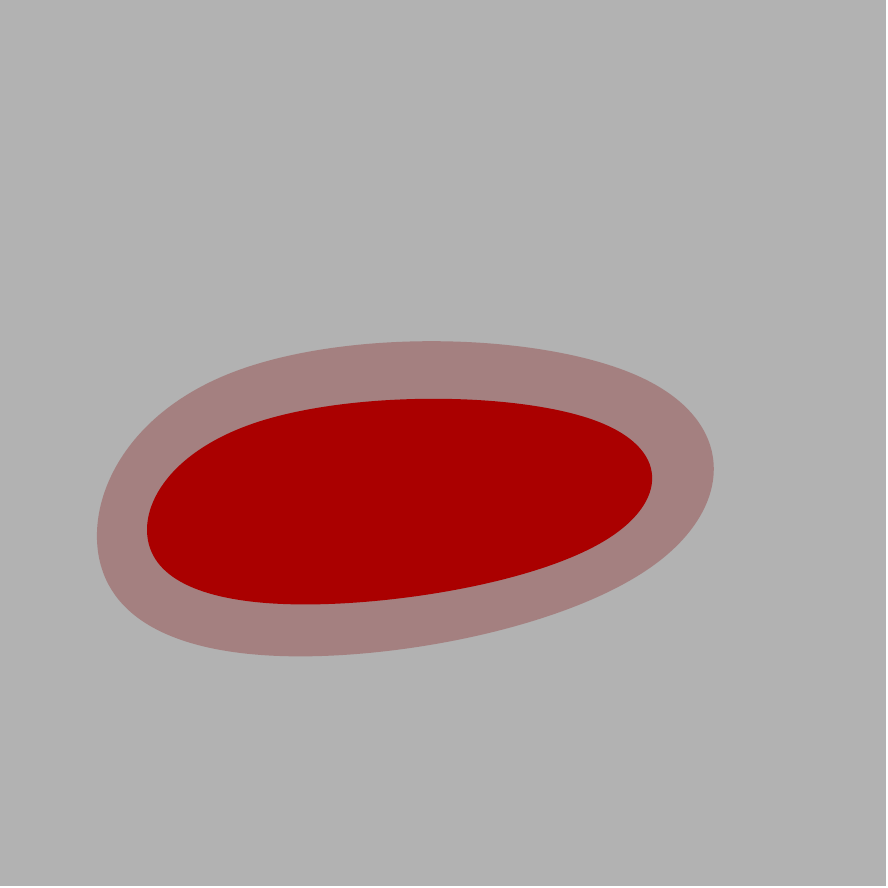}}%
    \put(0.38467928,0.43889045){\color[rgb]{0,0,0}\makebox(0,0)[lt]{\lineheight{1.25}\smash{\begin{tabular}[t]{l}$\nlDom_{nl}$\end{tabular}}}}%
    \put(0.30,0.80){\color[rgb]{0,0,0}\makebox(0,0)[lt]{\lineheight{1.25}\smash{\begin{tabular}[t]{l}$\nlDom_l$\end{tabular}}}}%
    \put(0.45,0.56){\color[rgb]{0,0,0}\makebox(0,0)[lt]{\lineheight{1.25}\smash{\begin{tabular}[t]{l}$\nlBound_{nl}$\end{tabular}}}}%
  \end{picture}%
\endgroup%
}
		\caption{Here, the nonlocal domain $\nlDom_{nl}$ is depicted in the darker red and the corresponding nonlocal domain $\nlBound_{nl}$ of the nonlocal domain $\nlDom_{nl}$ is colored in the lighter red. Moreover, the 'local' domain $\nlDom_l$ consists in this example of the nonlocal boundary $\nlBound_{nl}$ as well as the gray area.}
		\label{fig:LtN_setting}
	\end{figure}
	In the remaining part of this work we denote by $\partial \nlDom$, $\partial \nlDom_l$ and $\partial \nlDom_{nl}$ the \emph{local boundary} of $\nlDom$, $\nlDom_l$ and $\nlDom_{nl}$, respectively. Moreover, the nonlocal boundary $\nlBound_{nl}$ regarding the nonlocal domain $\nlDom_{nl}$ is analogously defined as in \eqref{def:nlBound}, i.e.
	\begin{align*}
		\nlBound_{nl} \defas \{\yb \in \Rd\setminus\nlDom_{nl}: \int_{\nlDom_{nl}} \kernel(\xb,\yb) + \kernel(\yb,\xb) ~d\yb d\xb > 0\}.
	\end{align*}
	Then, the coupling problem is formulated in the following way:
	\begin{align}
		- \nlOp_l (\weakSol)(\xb) &= f(\xb) && \text{for } \xb \in \nlDom_l, \nonumber \\
		\partial_n \weakSol_l &= 0 && \text{on } \partial \nlDom_l \cap \partial \nlDom_{nl}, \label{local_LtN_problem} \\
		\weakSol_l &= 0 && \text{on } \partial \nlDom_l \cap \partial \nlDom, \nonumber \\ \nonumber \\
		-\nlOp_{nl}(\weakSol)(\xb) &= f(\xb) && \text{for } \xb \in \nlDom_{nl}, \nonumber \\
		\weakSol_{nl} &= \weakSol_l && \text{on } \nlBound_{nl} \cap \nlDom_l, \label{nonlocal_LtN_problem} \\
		\weakSol_{nl} &= 0 && \text{on } \nlBound_{nl} \cap \nlBound, \nonumber
	\end{align}
	where 
	\begin{align*}
		- \nlOp_l (\weakSol)(\xb) \defas -\nlOp_{l}(\weakSol_l, \weakSol_{nl})(\xb) \defas& - \Delta \weakSol_l(\xb) + \int_{\nlDom_{nl}} \left( \weakSol_l(\xb) - \weakSol_{nl}(\yb) \right) \kernel(\xb, \yb) ~d\yb \quad \text{and}\\
		-\nlOp_{nl}(\weakSol)(\xb) \defas -\nlOp_{nl}(\weakSol_{l}, \weakSol_{nl})(\xb) \defas& 2\int\limits_{\nlDom_{nl}} \left( \weakSol_{nl}(\xb) - \weakSol_{nl}(\yb) \right) \kernel(\xb,\yb) ~d\yb \\
		&+ \int\limits_{\nlBound_{nl} \cap \nlDom_l} \left( \weakSol_{nl}(\xb) - \weakSol_{l}(\yb) \right) \kernel(\xb,\yb) ~d\yb + \int\limits_{\nlBound_{nl} \cap \nlBound} \weakSol_{nl}(\xb) \kernel(\xb,\yb) ~d\yb.
	\end{align*}
	Basically, we solve a nonlocal Dirichlet problem on the nonlocal domain $\nlDom_{nl}$, where the 'local' solution $\weakSol_l$ serves as boundary data on $\nlBound_{nl} \cap \nlDom_{l}$. For the 'local' problem \eqref{local_LtN_problem} the dependence on the nonlocal solution $\weakSol_{nl}$ is more implicit. 
	Here, the operator $\nlOp_l (\weakSol)(\xb)$ is equivalent to the Laplacian $\Delta \weakSol_l(\xb)$, if $\dist(\xb, \nlDom_{nl}) \geq \delta$ and otherwise, if $\dist(\xb,\nlDom_{nl}) < \delta$, some nonlocal flux is added to the Laplacian. Thus, in the second case the value of the operator $\nlOp_l$ is influenced by the 'nonlocal' solution $\weakSol_{nl}$. Since $\kernel = 0$ almost everywhere in $\nlDom_{nl} \times \left(\Rd \setminus  \left(\nlDom_{nl} \cup \nlBound_{nl}\right) \right)$ we sometimes integrate over $\nlDom_{l}$ and $\nlBound$ instead of $\nlBound_{nl} \cap \nlDom_{l}$ and $\nlBound_{nl} \cap \nlBound$ in the remaining part of this work.
	\begin{remark}
		According to \cite{AcostaDD} the assumption that $\nlDom_{nl}$ needs to be connected can be relaxed to $\hat{\delta}$-connectedness for some $\hat{\delta} \in [0,\delta)$. A domain $\hat{\nlDom}$ is said to be $\hat{\delta}$-connected, if $\hat{\nlDom}$ cannot be written as a union of two disjoint open sets, whose distance is greater than $\hat{\delta}$, i.e.
		\begin{align*}
			\nexists \hat{\nlDom}_1,\hat{\nlDom}_2 \text{ open}:\ \hat{\nlDom}_1 \cap \hat{\nlDom}_2 = \emptyset,\ \hat{\nlDom}_1 \cup \hat{\nlDom}_2 = \hat{\nlDom} \text{ and } \dist(\hat{\nlDom}_1, \hat{\nlDom}_2) > \hat{\delta}.
		\end{align*}
		However, since $\hat{\delta}$-connectedness has to be checked after every deformation of $\shape$, we only consider the case, where $\nlDom_{nl}$ is (0-)connected. Thus, we do not apply the general concept of $\hat{\delta}$-connectedness. 
	\end{remark}
	In order to formulate a weak formulation of the LtN coupling we define the LtN bilinear operator $\LtNOp$ and the linear functional $\varForce$ as 
	\begin{align*}
		\varForce (\advar) &\defas \int_{\nlDom_l}f(\xb) v_l(\xb) ~d\xb - \int_{\nlDom_{nl}} f(\xb) v_{nl}(\xb) ~d\xb \quad \text{and}\\
		\LtNOp (\weakSol, \advar) &\defas \int_{\nlDom_l} \grad u_l(\xb) \grad v_l(\xb) ~d\xb
		+ \int_{\nlDom_{nl}} \int_{\nlDom_{nl}} \left( u_{nl}(\xb) - u_{nl}(\yb) \right) \left( v_{nl}(\xb) - v_{nl}(\yb) \right) \kernel(\xb,\yb) ~d\yb d\xb 
		\\
		&+ \int_{\nlDom_{nl}} \int_{\nlDom_l} (u_{nl}(\xb) - u_l(\yb)) (v_{nl}(\xb) - v_l(\yb))\kernel(\xb,\yb) ~d\yb d\xb \\
		&+ \int_{\nlDom_{nl}}u_{nl}(\xb)v_{nl}(\xb) \int_{\nlBound} \kernel(\xb,\yb) ~d\yb d\xb.  
	\end{align*}
	Then we can define an inner product $<\cdot,\cdot\cdot>_{\LtNSpace}$ and the corresponding norm $||\cdot||_{\LtNSpace}$ as 
	\begin{align*}
		<\weakSol,\advar>_{\LtNSpace} \defas \LtNOp(\weakSol, \advar) \quad \text{and} \quad ||\weakSol||_{\LtNSpace} \defas \sqrt{\LtNOp(\weakSol, \weakSol)}.
	\end{align*}
	\begin{definition}
		The energy space $(\LtNSpace, ||\cdot||_{\LtNSpace})$  is defined as
		\begin{align*}
			\LtNSpace &\defas \{\weakSol=(\weakSol_l, \weakSol_{nl}): \weakSol_l \in H_{\partial \nlDom \cap \partial \nlDom_l }^1(\nlDom_l),\ \weakSol_{nl} \in L_c^2\left( \nlDom_{nl} \cup \nlBound_{nl} \right) \},
		\end{align*}
		where the spaces $H_{\partial \nlDom \cap \partial \nlDom_l }^1(\nlDom_l)$ and $L_c^2\left( \nlDom_{nl} \cup \nlBound_{nl} \right)$ are set as follows
		\begin{align*}
			H_{\partial \nlDom \cap \partial \nlDom_l }^1(\nlDom_l) &\defas \{\advar \in H^1(\nlDom_l): \advar = 0 \text{ on }  \partial \nlDom \cap \partial \nlDom_{l} \} \quad \text{and}\\
			L_c^2\left( \nlDom_{nl} \cup \nlBound_{nl} \right) &\defas \{\advar \in L^2(\nlDom_{nl} \cup \nlBound_{nl}): \advar = 0 \text{ on }  \nlBound_{nl}\}.
		\end{align*}
	\end{definition}
	
	\begin{definition}
		Given $f \in L^2(\nlDom)$, we call $\weakSol = (\weakSol_l, \weakSol_{nl}) \in \LtNSpace$ a weak solution to the coupling that consists of equations \eqref{local_LtN_problem} and \eqref{nonlocal_LtN_problem}, if $\weakSol$ fulfills
		\begin{align}
			\label{LtN_weak_form}
			\varOp^{LtN}(\weakSol,\advar) = \varForce(\advar) \text{ for all } \advar=(\advar_l, \advar_{nl}) \in \LtNSpace.
		\end{align}
	\end{definition}
	Thus, equation \eqref{LtN_weak_form} is the variational formulation of the combined problem.
	Next, we would like to point out, why this coupling is called energy-based.
	Therefore, we want to minimize the energy functional
	\begin{align*}
		\min_{\weakSol \in \LtNSpace}\energyFunc(\weakSol) &\defas \frac{1}{2} \LtNOp(\weakSol,\weakSol) - \varForce(\advar),
	\end{align*}
	and derive the first order necessary condition
	\begin{align*}
		d\energyFunc(\weakSol)(\advar) = \varOp^{LtN}(\weakSol,\advar) - \varForce(\advar) = 0 \text{ for all } \advar=(\advar_l, \advar_{nl}) \in \LtNSpace.
	\end{align*}
	which is equivalent to the variational formulation \eqref{LtN_weak_form}.
	Now, if we set the local component of the test vector $\advar_l = 0$, we get the variational formulation regarding the nonlocal Dirichlet problem \eqref{nonlocal_LtN_problem} of the LtN coupling
	\begin{align*}
		&\int_{\nlDom_{nl}} \int_{\nlDom_{nl}} \left( \advar_{nl}(\xb) - \advar_{nl}(\yb) \right) \left( \weakSol_{nl}(\xb) - \weakSol_{nl}(\yb) \right) \kernel(\xb,\yb) ~d\yb d\xb \\
		&+ \int_{\nlDom_{nl}} \advar_{nl}(\xb) \int_{\nlDom_l} \left( \weakSol_{nl}(\xb) - \weakSol_l(\yb) \right) \kernel(\xb,\yb) ~d\yb d\xb
		+ \int_{\nlDom_{nl}} \advar_{nl}(\xb)\weakSol_{nl}(\xb) \int_{\nlBound} \kernel(\xb,\yb) ~d\yb d\xb \\
		&= \int_{\nlDom_{nl}} f(\xb)\advar_{nl}(\xb) ~d\xb \text{ for all } \advar_{nl} \in \testSpace(\nlDom_{nl} \cup \nlBound_{nl}) = L^2_c(\nlDom_{nl} \cup \nlBound_{nl}).
	\end{align*}
	Analogously, by setting the nonlocal component of the test vector $\advar_{nl} = 0$, we derive the weak formulation for the local subproblem \eqref{local_LtN_problem} of the LtN coupling 
	\begin{align*}
		\int_{\nlDom_l} \grad \weakSol_l(\xb) \grad \advar_l(\xb) ~d\xb + \int_{\nlDom_l} \advar_l(\xb) \int_{\nlDom_{nl}} \left( \weakSol_l(\xb) - \weakSol_{nl}(\yb) \right)\kernel(\xb, \yb) ~d\yb d\xb \\
		= \int_{\nlDom_l}f(\xb)\advar_l(\xb) ~d\xb
		\text{ for all } \advar_l \in H_{\partial \nlDom_l \cap \partial \nlDom}^1(\nlDom_l).
	\end{align*}
	All in all, the coupling is called energy-based, because the weak formulations for \eqref{local_LtN_problem}, \eqref{nonlocal_LtN_problem} and the combined problem can be derived by investigating first order necessary conditions for the minimization of a specific energy functional, where we only consider certain test functions.\\
	For the purpose of having a well-posed LtN coupling, we need further restrictions. In addition to the assumptions (K1) and (K2) from Chapter \ref{chap:Dir_Prob}, the kernel $\kernel$ is supposed to be
	\begin{itemize}
		\item[(K3)] bounded and
		\item[(K4)] translation invariant, i.e. there exists a function $\transInvKernel: \Rd \rightarrow \R$ and $$\kernel(\xb, \yb) = J(\xb - \yb).$$
	\end{itemize}
	\begin{remark}
		\label{remark:comparison_kernel_Acosta}
		In contrast to this work, in \cite{AcostaDD} the kernel $\kernel$ is not assumed to be truncated. Instead they assume additionally the following conditions:
		\begin{itemize}
			\item The kernel $\kernel$ is integrable.
			\item (Compactness): The operator $C: L^2(\nlDom_{nl}) \rightarrow L^2(\nlDom_{nl})$ with $$C(u)(\xb) = \int_{\nlDom_{nl}}\transInvKernel(\xb - \yb)u(\yb) ~d\yb$$ is compact.
		\end{itemize}
		Then, in \cite[Lemma 2.6]{AcostaDD} it is shown that the space $(\LtNSpace, <\cdot,\cdot\cdot>_{\LtNSpace})$ is actually a Hilbert space.\\
		In our case the truncatedness and boundedness of $\kernel$ imply $\kernel \in L^1((\nlDom_{nl} \cup \nlBound_{nl}) \times (\nlDom_{nl} \cup \nlBound_{nl}))$ and $\kernel \in L^2((\nlDom_{nl} \cup \nlBound_{nl}) \times (\nlDom_{nl} \cup \nlBound_{nl}))$ since the domain $\nlDom_{nl}$ and therefore $\nlDom_{nl} \cup \nlBound_{nl}$ are also bounded. As a result, the compactness condition holds, because the operator $C$ is in this case a Hilbert-Schmidt operator and therefore compact(see \cite[Theorem 6.12]{Brezis}).
	\end{remark}
	Consequently, we derive the next Lemma.
	\begin{lemma}[{After \cite[Lemma 2.6]{AcostaDD}}]
		\label{lemma:AcostaNormEquivalence}
		Let $\kernel$ fulfill (K1)-(K4). Then, the space $(\LtNSpace,<\cdot,\cdot\cdot>_{\LtNSpace})$ is a Hilbert space and the norm $||\cdot||_{\LtNSpace}$ is equivalent to the norm of the product space $\left(H^1(\nlDom_l), ||\cdot||_{H^1(\nlDom_l)}\right)\times \left(L_c^2(\nlDom_{nl} \cup \nlBound_{nl}), ||\cdot||_{L^2(\nlDom_{nl})}\right)$.
	\end{lemma}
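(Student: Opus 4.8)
The plan is to prove the quantitative two-sided estimate
\begin{align*}
	c\left(\|\weakSol_l\|_{H^1(\nlDom_l)}^2 + \|\weakSol_{nl}\|_{L^2(\nlDom_{nl})}^2\right) \le \LtNOp(\weakSol,\weakSol) \le C\left(\|\weakSol_l\|_{H^1(\nlDom_l)}^2 + \|\weakSol_{nl}\|_{L^2(\nlDom_{nl})}^2\right)
\end{align*}
for all $\weakSol=(\weakSol_l,\weakSol_{nl})\in\LtNSpace$ with constants $0<c\le C<\infty$. Once this holds, the Hilbert space property follows: the product space $H^1_{\partial\nlDom\cap\partial\nlDom_l}(\nlDom_l)\times L_c^2(\nlDom_{nl}\cup\nlBound_{nl})$ is complete (each factor is a closed subspace cut out by a vanishing trace, resp.\ a vanishing volume constraint), the lower bound makes $\LtNOp(\weakSol,\weakSol)$ positive definite, and $\LtNOp$ is manifestly symmetric in its two arguments, so $<\cdot,\cdot\cdot>_\LtNSpace$ is an inner product whose completeness is inherited through the norm equivalence. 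The upper bound is routine: I would split $\LtNOp(\weakSol,\weakSol)$ into its four (individually nonnegative) summands, bound the gradient term by $|\weakSol_l|_{H^1(\nlDom_l)}^2$, and use $(a-b)^2\le 2a^2+2b^2$ together with the boundedness (K3) and truncated integrability of $\kernel$ (so that $\sup_\xb\int\kernel(\xb,\cdot)<\infty$, cf.\ the Remark) to convert the three integral terms into multiples of $\|\weakSol_{nl}\|_{L^2}^2$ and $\|\weakSol_l\|_{L^2(\nlDom_l)}^2\le\|\weakSol_l\|_{H^1(\nlDom_l)}^2$.

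For the lower bound the local contribution is immediate: since $\weakSol_l=0$ on $\partial\nlDom\cap\partial\nlDom_l$ (a boundary piece of positive surface measure), the Friedrichs/Poincaré inequality gives $\int_{\nlDom_l}|\grad\weakSol_l|^2\ge c_1\|\weakSol_l\|_{H^1(\nlDom_l)}^2$. The genuine difficulty, and the heart of the lemma, is a nonlocal Poincaré inequality controlling $\|\weakSol_{nl}\|_{L^2(\nlDom_{nl})}$, which I would establish by contradiction. Assume there is a sequence $\weakSol^n=(\weakSol_l^n,\weakSol_{nl}^n)$ with $\|\weakSol_l^n\|_{H^1}^2+\|\weakSol_{nl}^n\|_{L^2}^2=1$ and $\LtNOp(\weakSol^n,\weakSol^n)\to 0$. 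As all four summands are nonnegative, each tends to zero; by Friedrichs $\weakSol_l^n\to 0$ in $H^1(\nlDom_l)$, whence $\|\weakSol_{nl}^n\|_{L^2}\to 1$. Expanding the coupling and boundary terms and using $\weakSol_l^n\to 0$ in $L^2$, one obtains $\int_{\nlDom_{nl}}(\weakSol_{nl}^n)^2\,\psi\to 0$, where $\psi(\xb)=\int_{\nlBound_{nl}}\kernel(\xb,\yb)\,d\yb$ is the exterior interaction weight (the coupling and boundary contributions combine to an integral over $\nlBound_{nl}$ since $\kernel$ vanishes elsewhere), while the intra-domain term $\int_{\nlDom_{nl}}\int_{\nlDom_{nl}}(\weakSol_{nl}^n(\xb)-\weakSol_{nl}^n(\yb))^2\kernel\to 0$.

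The decisive tool is the compactness of the convolution operator $C\weakSol_{nl}(\xb)=\int_{\nlDom_{nl}}\kernel(\xb,\yb)\weakSol_{nl}(\yb)\,d\yb$, which is Hilbert--Schmidt under (K1)--(K4) (Remark). Passing to a weakly convergent subsequence $\weakSol_{nl}^n\rightharpoonup\weakSol_{nl}$ in $L^2(\nlDom_{nl})$, compactness yields $\langle C\weakSol_{nl}^n,\weakSol_{nl}^n\rangle_{L^2(\nlDom_{nl})}\to\langle C\weakSol_{nl},\weakSol_{nl}\rangle_{L^2(\nlDom_{nl})}$; combined with the weak lower semicontinuity of the convex functional $\weakSol_{nl}\mapsto\int M(\weakSol_{nl})^2$, where $M(\xb)=\int_{\nlDom_{nl}}\kernel(\xb,\yb)\,d\yb\ge 0$, the vanishing of the intra-domain term forces the limit to satisfy $\int_{\nlDom_{nl}}\int_{\nlDom_{nl}}(\weakSol_{nl}(\xb)-\weakSol_{nl}(\yb))^2\kernel=0$. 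By (K2) and the connectedness of $\nlDom_{nl}$ this means $\weakSol_{nl}$ is a.e.\ equal to a constant; since $\psi>0$ on a set of positive measure and $\int(\weakSol_{nl}^n)^2\psi\to 0$ gives $\psi^{1/2}\weakSol_{nl}^n\to 0$ strongly, that constant must vanish, i.e.\ $\weakSol_{nl}=0$. Finally I would exploit the uniform lower bound $2M(\xb)+\psi(\xb)\ge M(\xb)+\psi(\xb)=\int_{\nlDom_{nl}\cup\nlBound_{nl}}\kernel(\xb,\yb)\,d\yb\ge\int_{B_\epsilon(\xb)}\kernel(\xb,\yb)\,d\yb\ge\kernel_0|B_\epsilon|>0$, valid for every $\xb\in\nlDom_{nl}$ because $B_\epsilon(\xb)\subset\nlDom_{nl}\cup\nlBound_{nl}$ with $\kernel\ge\kernel_0$ there by (K2). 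Writing the nonlocal block as $\langle(2M+\psi)\weakSol_{nl}^n,\weakSol_{nl}^n\rangle-2\langle C\weakSol_{nl}^n,\weakSol_{nl}^n\rangle$ and letting $n\to\infty$, its value converges to $2\langle C\weakSol_{nl},\weakSol_{nl}\rangle=0$, so the uniform lower bound forces $\|\weakSol_{nl}^n\|_{L^2}^2\to 0$, contradicting $\|\weakSol_{nl}^n\|_{L^2}\to 1$.

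I expect the main obstacle to be precisely this nonlocal coercivity step. For an integrable (non-singular) kernel the bilinear form carries no Sobolev smoothing, so the only compactness at hand is that of $C$, and one must combine it delicately with the intra-domain diffusion term (which pins the weak limit down to a constant) and the exterior-interaction weight $\psi$ (which forces that constant to vanish); the uniform positivity of the total interaction mass furnished by (K2) is then what upgrades weak to strong convergence and closes the contradiction. The remaining care is bookkeeping: verifying that the coupling term $T_2$ indeed reduces, after $\weakSol_l^n\to 0$, to the $\psi$-weighted penalization so that the purely nonlocal argument applies, and checking that $\partial\nlDom\cap\partial\nlDom_l$ carries positive measure so that Friedrichs is available for the local block.
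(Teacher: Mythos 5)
Your proposal is correct in substance, but it takes a very different route from the paper: the paper's entire proof is a citation, namely that the claim ``follows directly from \cite[Lemma 2.6]{AcostaDD}'' once Remark \ref{remark:comparison_kernel_Acosta} has checked that a truncated, bounded kernel on a bounded domain is integrable and makes the convolution operator $C$ Hilbert--Schmidt, hence compact. You instead reconstruct the content of that cited lemma in-house: Friedrichs for the local block, an upper bound from (K1)/(K3), and --- the genuine core --- a nonlocal Poincar\'e inequality by contradiction, where compactness of $C$ turns weak into strong information, weak lower semicontinuity plus vanishing of the intra-domain term pins the weak limit to a constant via (K2) and connectedness of $\nlDom_{nl}$, the exterior weight $\psi$ kills the constant, and the uniform bound $M+\psi\geq\kernel_0|B_\epsilon|$ closes the contradiction; this is essentially the mechanism behind \cite[Lemma 2.6]{AcostaDD}, which is precisely why that reference demands compactness of $C$ as a hypothesis. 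What your version buys is a self-contained, checkable argument exposing exactly where each assumption enters (in particular that (K2) is what makes the total interaction mass uniformly positive, since $B_\epsilon(\xb)\subset\nlDom_{nl}\cup\nlBound_{nl}$ for $\xb\in\nlDom_{nl}$); what the paper's citation buys is brevity and delegation of the delicate analysis to the source. Two small points of bookkeeping in your write-up: writing the nonlocal block as $\langle(2M+\psi)\weakSol_{nl},\weakSol_{nl}\rangle-2\langle C\weakSol_{nl},\weakSol_{nl}\rangle$ tacitly uses symmetry of $\kernel$, which (K1)--(K4) do not literally give (translation invariance does not force $\transInvKernel$ even); the harmless fix is to replace $2M$ by $M(\xb)+\tilde M(\xb)$ with $\tilde M(\xb)\defas\int_{\nlDom_{nl}}\kernel(\yb,\xb)\,d\yb\geq 0$, which leaves your uniform lower bound intact. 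And the positive surface measure of $\partial\nlDom\cap\partial\nlDom_l$, which you rightly flag as needed for Friedrichs, is guaranteed in the paper's geometry because $\shape$ is a closed curve interior to $\nlDom$, so $\partial\nlDom\subset\partial\nlDom_l$.
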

	\begin{proof}
		Follows directly from \cite[Lemma 2.6]{AcostaDD} and Remark \ref{remark:comparison_kernel_Acosta}.
	\end{proof}
	\begin{corollary}
		\label{cor:LtN_well_posedness}
		As a consequence of Lemma \ref{lemma:AcostaNormEquivalence} and the Riesz representation theorem, there exists a unique solution to \eqref{LtN_weak_form}.
	\end{corollary}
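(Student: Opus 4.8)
The plan is to recognise that, thanks to Lemma \ref{lemma:AcostaNormEquivalence}, the variational problem \eqref{LtN_weak_form} is nothing but a direct application of the Riesz representation theorem. Indeed, the bilinear form $\LtNOp$ coincides \emph{by definition} with the inner product $<\cdot,\cdot\cdot>_{\LtNSpace}$ on the Hilbert space $(\LtNSpace, <\cdot,\cdot\cdot>_{\LtNSpace})$, so \eqref{LtN_weak_form} reads: find $\weakSol \in \LtNSpace$ such that $<\weakSol,\advar>_{\LtNSpace} = \varForce(\advar)$ for all $\advar \in \LtNSpace$. Consequently no separate coercivity estimate (as in a Lax--Milgram argument) is required; the symmetry, bilinearity and positive definiteness of $\LtNOp$ that make it a genuine inner product, together with the completeness of $\LtNSpace$, have already been supplied by Lemma \ref{lemma:AcostaNormEquivalence}. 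It therefore remains only to verify that $\varForce$ is a bounded linear functional on $\LtNSpace$ and then to invoke Riesz.

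First I would note that $\varForce$ is linear, which is immediate since it is a sum of two $L^2$ pairings against the fixed datum $f$. The single substantive step is continuity. For $\advar = (\advar_l, \advar_{nl}) \in \LtNSpace$, I would apply the Cauchy--Schwarz inequality to each term of $\varForce(\advar) = \int_{\nlDom_l} f \advar_l ~d\xb - \int_{\nlDom_{nl}} f \advar_{nl} ~d\xb$, using that $f \in L^2(\nlDom)$ restricts to elements of $L^2(\nlDom_l)$ and $L^2(\nlDom_{nl})$, to obtain
\[
	|\varForce(\advar)| \leq ||f||_{L^2(\nlDom_l)} ||\advar_l||_{L^2(\nlDom_l)} + ||f||_{L^2(\nlDom_{nl})} ||\advar_{nl}||_{L^2(\nlDom_{nl})}.
\]
The right-hand side is bounded by a constant (depending on $||f||_{L^2(\nlDom)}$) times the product-space norm $||\advar_l||_{H^1(\nlDom_l)} + ||\advar_{nl}||_{L^2(\nlDom_{nl})}$, since the $L^2(\nlDom_l)$-norm is dominated by the $H^1(\nlDom_l)$-norm. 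By the norm equivalence asserted in Lemma \ref{lemma:AcostaNormEquivalence}, this product norm is in turn controlled by a constant multiple of $||\advar||_{\LtNSpace}$, so that $\varForce \in \LtNSpace'$.

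Having established $\varForce \in \LtNSpace'$, the Riesz representation theorem yields a unique $\weakSol \in \LtNSpace$ with $<\weakSol,\advar>_{\LtNSpace} = \varForce(\advar)$ for every $\advar \in \LtNSpace$, which is precisely the unique weak solution of \eqref{LtN_weak_form}. I do not anticipate any real difficulty: the hard analytic content, namely that $\LtNOp$ induces an inner product under which $\LtNSpace$ is complete and norm-equivalent to the product space, is exactly what Lemma \ref{lemma:AcostaNormEquivalence} delivers. The only point requiring minor care is to cite $f \in L^2(\nlDom)$ so that the restrictions $f|_{\nlDom_l}$ and $f|_{\nlDom_{nl}}$ are square-integrable before applying Cauchy--Schwarz; everything else is routine.
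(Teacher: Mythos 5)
Your proof is correct and follows exactly the route the paper intends: the corollary is stated as an immediate consequence of Lemma \ref{lemma:AcostaNormEquivalence} (which supplies the Hilbert space structure with $\LtNOp$ as the inner product) together with the Riesz representation theorem, and you simply fill in the routine verification that $\varForce$ is a bounded linear functional via Cauchy--Schwarz and the norm equivalence. No discrepancy with the paper's (implicit) argument.
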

	Moreover, we will make use of Lemma \ref{lemma:AcostaNormEquivalence} in order to show that the averaged adjoint method, which will be introduced in Section \ref{section:AAM}, can be applied. Next, we present a way to iteratively compute the solution to \eqref{LtN_weak_form}.
	\subsection{Schwarz Methods}
	\label{section:schwarz}
	In Chapter \ref{section:num_ex}, we solve the energy-based LtN coupling by applying the multiplicative Schwarz method as described in \cite{AcostaDD}. Schwarz methods were developed by H.A. Schwarz around 1870 in order to solve a PDE on an 'irregular' domain\cite{schwarz1869}. In the following, we present the Schwarz method in the context of the LtN coupling problem.
	The basic idea is to solve problems \eqref{local_LtN_problem} and \eqref{nonlocal_LtN_problem} by finding solutions to the local and the nonlocal problem in an alternating fashion as it is shown in Algorithm \ref{alg:mult_schwarz}.\\
	\begin{algorithm}\label{alg:mult_schwarz}
		\textbf{Given}: $\weakSol^0=(\weakSol_l^0, \weakSol_{nl}^0)$.\\
		\While{termination criterion not fulfilled}{
			Find $\weakSol_l^k$, s.t.
			\begin{alignat*}{2}
				-\nlOp_l(\weakSol_l^k, \weakSol_{nl}^{k-1}) &= f \quad && \text{on } \nlDom_l, \\
				\partial_n \weakSol_l^k &= 0 && \text{on } \partial \nlDom_l \cap \partial \nlDom_{nl},\\
				\weakSol_l^k &= 0 && \text{on } \partial \nlDom_l \cap \nlDom.
			\end{alignat*}
			Find $\weakSol_{nl}^k$, s.t.
			\begin{alignat*}{2}
				-\nlOp_{nl}(\weakSol_{l}^k ,\weakSol_{nl}^k) &= f && \text{on } \nlDom_{nl}, \\
				\weakSol_{nl}^k &= \weakSol_l^k \quad && \text{on } \nlBound_{nl} \cap \nlDom_l,\\
				\weakSol_{nl}^k &= 0 && \text{on } \nlBound_{nl} \cap \nlBound.
			\end{alignat*}
			$k \leftarrow k+1$.
		}
		\caption{Multiplicative Schwarz Method for LtN coupling}
	\end{algorithm}
	Given the solution $\weakSol^{k-1}=(\weakSol^{k-1}_l, \weakSol^{k-1}_{nl})$ from the previous iteration, we first compute the solution $\weakSol_l^k$ of the local problem with $\weakSol^{k-1}_{nl}$ as a second argument of $-\nlOp_l$ and after that, we find $\weakSol_{nl}^k$ by solving the nonlocal problem with boundary data $\weakSol_l^k$. So, we use the new solution $\weakSol_l^k$ immediately in the same iteration.
	\begin{theorem}[{\cite[Lemma 2.9]{AcostaDD}}]
		Let $\weakSol = (\weakSol_l, \weakSol_{nl}) \in \LtNSpace$ be a solution to \eqref{LtN_weak_form}.
		Algorithm \ref{alg:mult_schwarz} yields a sequence $\weakSol^k=(\weakSol^k_l, \weakSol^k_{nl})$, such that
		\begin{align*}
			\weakSol_l^k \rightarrow \weakSol_l \text{ in } H^1_{\partial \nlDom \cap \nlDom_l}(\nlDom_l) \text{ and}\\
			\weakSol_{nl}^k \rightarrow \weakSol_{nl} \text{ in } L^2_c(\nlDom_{nl} \cup \nlBound_{nl}).  
		\end{align*} 
		Additionally, this method is geometrically convergent, i.e. there exists a constant $\varepsilon \in (0,1)$ with
		\begin{align*}
			\max\{||\weakSol_l^k - \weakSol_l||_{H^1_{\partial \nlDom \cap \partial \nlDom_l}(\nlDom_l)}, ||\weakSol_{nl}^k - \weakSol_{nl}||_{L^2(\nlDom_{nl})} \} \leq C \varepsilon^k,
		\end{align*}
		where $C \defas \max\{||\weakSol_l^0 - \weakSol_l||_{H^1_{\partial \nlDom \cap \partial \nlDom_l}(\nlDom_l)}, ||\weakSol_{nl}^0 - \weakSol_{nl}||_{L^2(\nlDom_{nl})} \}$.
	\end{theorem}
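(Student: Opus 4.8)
\emph{Proof strategy (proposal).} The plan is to recognize Algorithm \ref{alg:mult_schwarz} as the method of alternating orthogonal projections with respect to the energy inner product $<\cdot,\cdot\cdot>_{\LtNSpace}=\LtNOp(\cdot,\cdot)$, for which geometric convergence is classical. By Lemma \ref{lemma:AcostaNormEquivalence} the pair $(\LtNSpace,<\cdot,\cdot\cdot>_{\LtNSpace})$ is a Hilbert space whose norm is equivalent to the product norm; hence $\LtNOp$ is a genuine inner product, orthogonal projections are well defined, and convergence in $\|\cdot\|_{\LtNSpace}$ is equivalent to convergence in the component norms appearing in the statement.

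First I would introduce the two closed subspaces
\[
	W_l \defas \{(\advar_l,0): \advar_l \in H^1_{\partial\nlDom\cap\partial\nlDom_l}(\nlDom_l)\}, \qquad W_{nl} \defas \{(0,\advar_{nl}): \advar_{nl} \in L_c^2(\nlDom_{nl}\cup\nlBound_{nl})\},
\]
and observe that $\LtNSpace = W_l + W_{nl}$ with $W_l \cap W_{nl} = \{0\}$, since every $\advar=(\advar_l,\advar_{nl})$ splits uniquely. Let $P_l,P_{nl}$ be the $\LtNOp$-orthogonal projections onto $W_l,W_{nl}$. The key observation is that each half-step is the corresponding Galerkin projection: testing \eqref{LtN_weak_form} only against $(\advar_l,0)\in W_l$ reproduces exactly the local problem solved for $\weakSol_l^k$, and testing only against $(0,\advar_{nl})\in W_{nl}$ reproduces the nonlocal problem solved for $\weakSol_{nl}^k$. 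Writing $\mathbf{e}^{k}\defas \weakSol-\weakSol^{k}$, the local solve makes $\weakSol-(\weakSol_l^k,\weakSol_{nl}^{k-1})$ $\LtNOp$-orthogonal to $W_l$ with correction in $W_l$, and the nonlocal solve makes $\mathbf{e}^k$ orthogonal to $W_{nl}$ with correction in $W_{nl}$; by uniqueness of the orthogonal decomposition this gives the error propagation
\[
	\mathbf{e}^k = (\Id - P_{nl})(\Id - P_l)\,\mathbf{e}^{k-1}.
\]

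Next I would invoke the theory of alternating projections. Because $\LtNSpace = W_l + W_{nl}$ is closed and $W_l\cap W_{nl}=\{0\}$, the Friedrichs angle $\theta$ between $W_l$ and $W_{nl}$ is positive. Using $\Id-P_l=P_{W_l^\perp}$, $\Id-P_{nl}=P_{W_{nl}^\perp}$, and $W_l^\perp\cap W_{nl}^\perp=(W_l+W_{nl})^\perp=\{0\}$ together with the identity of the Friedrichs angle for a subspace pair and its orthogonal complements, one obtains
\[
	\|(\Id - P_{nl})(\Id - P_l)\|_{\LtNSpace} = \cos\theta =: \varepsilon < 1,
\]
so that $\|\mathbf{e}^k\|_{\LtNSpace}\le \varepsilon\,\|\mathbf{e}^{k-1}\|_{\LtNSpace}\le \varepsilon^k\,\|\mathbf{e}^0\|_{\LtNSpace}$. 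In particular $\weakSol_l^k\to\weakSol_l$ and $\weakSol_{nl}^k\to\weakSol_{nl}$ in $\|\cdot\|_{\LtNSpace}$, and the norm equivalence of Lemma \ref{lemma:AcostaNormEquivalence} transfers both the convergence and the geometric rate to the norms $\|\cdot\|_{H^1_{\partial\nlDom\cap\partial\nlDom_l}(\nlDom_l)}$ and $\|\cdot\|_{L^2(\nlDom_{nl})}$.

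The main subtlety is not the existence of a contraction—abstractly $\varepsilon<1$ comes for free from closedness of $W_l+W_{nl}=\LtNSpace$—but rather securing the Hilbert-space framework that makes the projection picture legitimate in the first place: this is exactly Lemma \ref{lemma:AcostaNormEquivalence}, whose proof rests on the compactness of the operator $C$ in Remark \ref{remark:comparison_kernel_Acosta} (here obtained from boundedness and truncation of $\kernel$), and it is the ingredient I would import from \cite[Lemma 2.9]{AcostaDD}. A secondary technical point is constant bookkeeping: the pure energy-norm contraction carries the equivalence constants from Lemma \ref{lemma:AcostaNormEquivalence} when passing to the component norms, so matching the precise constant $C$ of the statement requires either absorbing those constants into $\varepsilon$ or performing the contraction estimate directly in the product norm; this is the step I would expect to demand the most care.
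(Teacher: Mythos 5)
The paper never proves this theorem internally: the statement is imported wholesale from \cite[Lemma 2.9]{AcostaDD}, with the paper's contribution limited to verifying (Remark \ref{remark:comparison_kernel_Acosta}, Lemma \ref{lemma:AcostaNormEquivalence}) that truncated bounded kernels fall within the hypotheses of that reference. So there is no internal argument to match your proposal against; judged on its own merits, your alternating-projection proof is sound and is the classical (Lions-type) way to analyze a two-subspace multiplicative Schwarz method. Your key identification is in fact already latent in the paper: in Section \ref{section:LtN} the authors show that testing the coupled variational problem \eqref{LtN_weak_form} with $(\advar_l,0)$ and $(0,\advar_{nl})$ reproduces exactly the weak forms of the subproblems \eqref{local_LtN_problem} and \eqref{nonlocal_LtN_problem} that Algorithm \ref{alg:mult_schwarz} solves, and $\LtNOp$ is symmetric by construction, so with the Hilbert-space structure and coercivity of Lemma \ref{lemma:AcostaNormEquivalence} each half-step is indeed a Galerkin (i.e.\ $\LtNOp$-orthogonal) projection and the error recursion $\mathbf{e}^k=(\Id-P_{nl})(\Id-P_l)\,\mathbf{e}^{k-1}$ is legitimate. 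Your angle argument is also correct as stated, though it silently stacks three classical facts worth citing explicitly (e.g.\ Deutsch, Kayalar--Weinert): $\|P_{W_{nl}^\perp}P_{W_l^\perp}\|=\cos\theta_0(W_l^\perp,W_{nl}^\perp)$, the coincidence of the Dixmier and Friedrichs angles because $W_l^\perp\cap W_{nl}^\perp=(W_l+W_{nl})^\perp=\{0\}$, and the invariance of the Friedrichs angle under passage to orthogonal complements, with positivity of the angle supplied by the (trivial) closedness of $W_l+W_{nl}=\LtNSpace$.

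The one point where your argument does not literally deliver the stated theorem is the constant, and you identified it correctly yourself. The energy contraction gives $\|\mathbf{e}^k\|_{\LtNSpace}\le\varepsilon^k\|\mathbf{e}^0\|_{\LtNSpace}$, and passing through the norm equivalence of Lemma \ref{lemma:AcostaNormEquivalence} yields $\max\{\|\mathbf{e}_l^k\|_{H^1},\|\mathbf{e}_{nl}^k\|_{L^2}\}\le M\varepsilon^k\max\{\|\mathbf{e}_l^0\|_{H^1},\|\mathbf{e}_{nl}^0\|_{L^2}\}$ with $M\ge 1$ the quotient of the equivalence constants; since the energy norm is monotone but the componentwise maximum may transiently grow by the factor $M$, the bound with $C$ equal exactly to the initial componentwise error does not follow from norm equivalence alone (absorbing $M$ into $\varepsilon$ works only when $M\varepsilon<1$, or for $k$ beyond a finite threshold). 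So either the contraction must be estimated directly in the component norms, or the statement should be read with a generic multiplicative constant — a cosmetic, not structural, discrepancy. In short: your proposal is a correct and essentially self-contained substitute for the proof the paper delegates to \cite{AcostaDD}, whose genuinely nontrivial input is precisely the Hilbert-space lemma you invoke.
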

	Alternatively, we could also follow the approach of the \emph{additive Schwarz method} and employ only the former iterate $\weakSol^{k-1}$ inside one iteration and switch to $\weakSol^{k}$ as the current solution after the iteration, which enables us to solve both subproblems in parallel.\\
	In both cases, we repeat this alternating process until some termination criterion is met. In this work, since we apply the finite element method to solve the subproblems, we stop when the Euclidean norm of the residual is smaller than some given tolerance.
	
	\subsection{Problem Formulation}
	\label{section:interface_problem_formulation}
	Interface identification is a popular problem in PDE-constrained shape optimization, see, e.g. \cite{welker_diss, Sturm_diss, shapes_geometries}. Here, as mentioned earlier, we consider an open, bounded and connected domain $\nlDom$ with Lipschitz domain as it is depicted in Picture \ref{fig:LtN_setting}. We then call $\shape \defas \partial \nlDom_{nl} \cap \partial \nlDom_{l}$ \emph{interface} and assume that $\shape$ is a closed curve that decomposes $\nlDom$ into the Lipschitz domains $\nlDom_{nl}$ and $\nlDom_{l}$, i.e. $\nlDom = \nlDom_{nl} \dot{\cup} \shape \dot{\cup} \nlDom_l$. Therefore, we denote by $\nlDom(\shape)$ that $\nlDom$ is decomposed by $\shape$ as described above. Moreover, the space $\LtNSpace$ and the norm $||\cdot||_{\LtNSpace}$ are also dependent on $\shape$, which we indicate by $||\cdot||_{\LtNSpace(\shape)}$ and $\LtNSpace(\shape)$, respectively.\\
	From now on, let $f_{\shape} \defas f_{l} \ind_{\nlDom_l} + f_{nl} \ind_{\nlDom_{nl}}$ with $f_l, f_{nl} \in H^1(\nlDom)$. Given some data $\data \in H^1(\nlDom)$, that we would like to approximate by a weak solution $\weakSol$ of the LtN coupling \eqref{LtN_weak_form}, we then call
	\begin{align}
		\begin{split}\label{interface_problem}
			\min_{\weakSol, \shape} \objFunc(\weakSol, \shape) &\defas j(\weakSol, \Gamma) + j_{per}(\shape) = \int_{\nlDom(\shape)} \left(\weakSol(\xb) - \data(\xb)\right)^2 ~d\xb + \nu \int_{\shape} 1 ~ds \\
			\text{s.t.} \quad &\interfaceOp(\weakSol,\advar) = \interfaceForce(\advar) \text{ for all } \advar \in \LtNSpace(\shape),
		\end{split}
	\end{align}
	an \emph{interface identification problem}, which is in our case constrained by a Local-to-Nonlocal coupling. Here, the subscripts of $\varForce_{\shape}$ and $\LtNOp_{\shape}$ indicate the dependence of these functions on $\shape$.
	Since every admissible choice of $\shape \subset \nlDom$ yields a unique solution, this problem can be seen as only dependent on the choice of the interface $\shape$, i.e. we would like to find a shape $\shape$, such that the corresponding solution approximates our data $\data$ as good as possible. The second term in the objective functional $ j_{per}$ is called perimeter regularization with parameter $\nu > 0$ and is often applied in optimization to avoid an ill-posedness of the problem\cite{perimeter}.
	
	\section{Shape Optimization}
	\label{section:shape_opt}
	In this Chapter, we follow the steps in \cite{shape_paper}, where an interface identification constrained by purely nonlocal equations is investigated.
	\subsection{Basics of Shape Optimization}
	In order to introduce the definition of a shape derivative we need a family of mappings $\{\Ftb\}_{t \in [0,T]}$ with $\Ftb:\overline{\nlDom} \rightarrow \Rd$ and $\Ftzero = \Id$ for a given sufficiently small constant $T \in (0, \infty)$. We can then deform a domain $\shape \subset \nlDom$ according to $\{\Ftb\}_{t \in [0,T]}$ and get a family of perturbed shapes $\{\shapet\}_{t\in [0,T]}$, where $\shapet \defas \Ftb(\shape)$.
	In this work, as a family of mappings we will only use the so-called \emph{perturbation of identity}.
	\begin{definition}[Perturbation of Identity]
		For a vector field $\Vb \in C_0^k(\nlDom, \R)$, $k \in \N$ we define the \emph{perturbation of identity} as
		\begin{align}
			\Ftb: \overline{\nlDom} \rightarrow \R, \quad \Ftb(\xb) = \xb + t\Vb(\xb) = \left(\Id + t\Vb \right)(\xb),
		\end{align}
		where $\Id:\Rd \rightarrow \Rd$ is the identity function.
	\end{definition}
	In Figure \ref{fig:pert_id_explained} we present an example how the perturbation of identity maps $\shape$ to a new interface $\Ftb(\shape)$.
	\begin{figure}[h!]
		\centering
		\def\svgwidth{0.75\textwidth}
		{\small 
\begingroup%
  \makeatletter%
  \providecommand\color[2][]{%
    \errmessage{(Inkscape) Color is used for the text in Inkscape, but the package 'color.sty' is not loaded}%
    \renewcommand\color[2][]{}%
  }%
  \providecommand\transparent[1]{%
    \errmessage{(Inkscape) Transparency is used (non-zero) for the text in Inkscape, but the package 'transparent.sty' is not loaded}%
    \renewcommand\transparent[1]{}%
  }%
  \providecommand\rotatebox[2]{#2}%
  \newcommand*\fsize{\dimexpr\f@size pt\relax}%
  \newcommand*\lineheight[1]{\fontsize{\fsize}{#1\fsize}\selectfont}%
  \ifx\svgwidth\undefined%
    \setlength{\unitlength}{344.37659985bp}%
    \ifx\svgscale\undefined%
      \relax%
    \else%
      \setlength{\unitlength}{\unitlength * \real{\svgscale}}%
    \fi%
  \else%
    \setlength{\unitlength}{\svgwidth}%
  \fi%
  \global\let\svgwidth\undefined%
  \global\let\svgscale\undefined%
  \makeatother%
  \begin{picture}(1,0.41491243)%
    \lineheight{1}%
    \setlength\tabcolsep{0pt}%
    \put(0,0){\includegraphics[width=\unitlength,page=1]{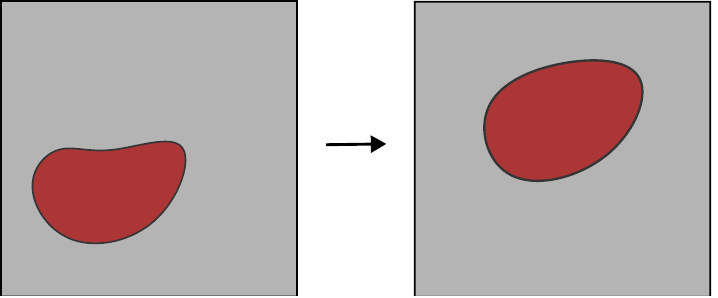}}%
    \put(0.48,0.22808561){\color[rgb]{0,0,0}\makebox(0,0)[lt]{\lineheight{1.25}\smash{\begin{tabular}[t]{l}$\Ftb$\end{tabular}}}}%
    \put(0,0){\includegraphics[width=\unitlength,page=2]{pert_id_explained.pdf}}%
    \put(0.13,0.21){\color[rgb]{0,0,0}\makebox(0,0)[lt]{\lineheight{1.25}\smash{\begin{tabular}[t]{l}$\xb$\end{tabular}}}}%
    \put(0.72,0.347){\color[rgb]{0,0,0}\makebox(0,0)[lt]{\lineheight{1.25}\smash{\begin{tabular}[t]{l}$\Ftb(\xb)$\end{tabular}}}}%
    \put(0.82,0.13916668){\color[rgb]{0,0,0}\makebox(0,0)[lt]{\lineheight{1.25}\smash{\begin{tabular}[t]{l}$\Ftb(\shape)$\end{tabular}}}}%
    \put(0.23,0.08331695){\color[rgb]{0,0,0}\makebox(0,0)[lt]{\lineheight{1.25}\smash{\begin{tabular}[t]{l}$\shape$\end{tabular}}}}%
  \end{picture}%
\endgroup%
}
		\caption{The perturbation of identity moves a point $\xb$ in the direction of a vector field $\Vb$ to the point $\Ftb(\xb)$. Applied on $\shape$, this interface is shifted and deformed to a new interface $\Ftb(\shape)$.}
		\label{fig:pert_id_explained}
	\end{figure}
	If $t$ is sufficiently small, such that $||t\Vb||_{W^{1,\infty}(\nlDom,\Rd)} < 1$, then the perturbation of identity $\Ftb:\overline{\nlDom} \rightarrow \overline{\nlDom}$ is Lipschitz, invertible and the inverse is also Lipschitz\cite{henrot_shape}. As a result, if $\shape$ is a Lipschitz domain, the perturbed domain $\shapet$ will also have a Lipschitz boundary. We are now able to present the definition of a shape derivative.
	\begin{definition}[Shape Derivative]
		Given a shape functional $\shapeFun:\shapeSpace \rightarrow \R$, where $\shapeSpace$ is an appropriate shape space, the \emph{Eulerian} or \emph{directional derivative} is defined as
		\begin{align}
			D_{\shape} \shapeFun(\shape)[\Vb] = \lim_{t \searrow 0} \frac{\shapeFun(\Ftb(\shape)) - \shapeFun(\shape)}{t}.
		\end{align}
		If for every $\Vb \in C_0^k(\nlDom, \Rd)$ the derivative $D_{\shape} J(\shape)[\Vb]$ exists and the function $\Vb \mapsto D_{\shape} \shapeFun (\shape) [\Vb]$ is linear and continuous, the derivative $D_{\shape} \shapeFun (\shape) [\Vb]$ is called \emph{shape derivative} at $\shape$ in the direction $\Vb$.
	\end{definition}
	In this work, we use the same framework as in \cite{linear_view} and, starting from an initial shape $\shape^0$, we define the corresponding shape space $\shapeSpace(\shape^0)$ as follows: 
	\begin{definition}[Shape Space]
		Given a domain $\shape^0 \subset \nlDom$ we set the corresponding \emph{shape space} $\shapeSpace(\shape^0)$ as
		\begin{align*}
			\shapeSpace(\shape^0) \defas \{\Tb(\shape^0)|~ \Tb:\overline{\nlDom} \rightarrow \overline{\nlDom} \text{ invertible} \}.
		\end{align*}
	\end{definition} 
	Since the composition $\Tb^n \circ \Tb^{n-1} \circ ... \circ \Tb^1$ of invertible functions $\Tb^k:\overline{\nlDom} \rightarrow \overline{\nlDom}$ for $k=1,...,n$ and $n \in \N$ is again an invertible function, the shape space $\shapeSpace(\shape^0)$ contains only domains that can be obtained by a finite number of suitable deformations of $\shape^0$. Because we only employ the perturbation of identity, the function $\Tb^k = \Id + t\Vb$ is invertible, if $||t\Vb||_{W^{1,\infty}(\nlDom,\Rd)} < 1$ for $k \in \N$ as mentioned above.
	\begin{remark}
		Since it could be the case that parts of the nonlocal boundary $\nlBound_{nl}$ are outside of $\nlDom$, i.e. $\nlBound \cap \nlBound_{nl} \neq \emptyset$, we extend every $\Vb \in C_0^k(\nlDom,\Rd)$ to $\overline{\nlDom} \cup \nlBound$ by zero. So, the boundary $\nlBound$ stays unchanged and only points inside $\nlDom$ can be moved according to $$\Vb \in C_0^k(\completeDom, \Rd) \defas \{\Vb:\overline{\nlDom} \cup \nlBound \rightarrow \Rd|~ \Vb = 0 \text{ on } \nlBound \text{ and } \left. \Vb \right|_{\overline{\nlDom}} \in C_0^k(\nlDom,\Rd) \}.$$ In the following, we only need vector fields that are one times continuously differentiable, i.e. $\Vb \in \vecfieldsspecific$.
	\end{remark}
	\subsection{Shape Derivative via the Averaged Adjoint Method}
	\label{section:AAM}
	As mentioned in Section \ref{section:interface_problem_formulation}, we assume the existence of a unique solution $\weakSol(\shape)$ for every admissible shape $\shape$, i.e. $\weakSol(\shape)$ satisfies $\interfaceOp(u(\shape), \advar) = \interfaceForce( \advar)$ for all $\advar \in \LtNSpace(\shape)$. Therefore, we investigate the so-called \emph{reduced problem}
	\begin{align}
		\label{eq_reduced_functional}
		\min\limits_{\shape \in \shapeSpace(\shape^0)}\; &\redFunc(\shape) := \objFunc(\weakSol(\shape), \shape).
	\end{align}
	Since we would like to apply derivative based minimization algorithms, we need to compute the shape derivative of the reduced objective functional $J^{red}$. There are several ways how the shape derivative of the objective functional can be computed, which are discussed in \cite{Sturm_diss}. 
	We choose the \emph{averaged adjoint method (AAM)} of \cite{sturm_minimax,AAM,Sturm_diss}, which is a material derivative-free approach to rigorously access the shape derivative of \eqref{eq_reduced_functional}. 
	Therefore, we need the so-called \emph{Lagrangian functional}
	\begin{align*}
		\lagrangian(\weakSol,\shape,\advar) \defas \objFunc(\weakSol,\shape) + \interfaceOp(\weakSol,\advar) - \interfaceForce(\advar).
	\end{align*}
	With the Lagrangian we can express the reduced functional as 
	\begin{align*}
		\redFunc(\shape)= \lagrangian(\weakSol(\shape),\shape,\advar), \quad \text{for an arbitrary } \advar \in \LtNSpace(\shape),
	\end{align*}
	where we apply that $\weakSol(\shape)$ is a weak solution of the LtN coupling \eqref{LtN_weak_form} regarding the interface $\shape$. We now fix $\shape$ and denote by $\shapet \defas \Ftb(\shape)$, $\nlDom_l^t \defas \Ftb(\nlDom_l)$ and $\nlDom_{nl}^t \defas \Ftb(\nlDom_{nl})$ the deformed interior boundary and the deformed domains, respectively. Furthermore we indicate by writing $\nlDom(\shapet)$ that we use the decomposition $\nlDom(\shapet) = \nlDom_1^t \dot{\cup} \shapet \dot{\cup} \nlDom_2^t \left( = \nlDom \right)$. As a consequence, the norm $||\cdot||_{\LtNSpace(\shapet)}$ and therefore the space $\LtNSpace(\shapet)$ differ from $||\cdot||_{\LtNSpace(\shape)}$ and $\LtNSpace(\shape)$, since $\shapet(=\partial \nlDom_l^t \cap \partial \nlDom_{nl}^t)$ indicates how $\nlDom$ is decomposed into $\nlDom_l^t$ and $\nlDom_{nl}^t$, which determines the integration domains that are employed to describe the operator $\LtNOp_{\shape}$. Then, the reduced objective functional regarding $\shapet$ can be written as
	\begin{align}
		\label{deformed_problem}
		\redFunc(\shapet)= \lagrangian(\weakSol(\shapet),\shapet,\advar), \quad \text{for any } \advar \in \LtNSpace(\shapet),
	\end{align}
	where $\weakSol(\shapet) \in \LtNSpace(\shapet)$.
	In order to differentiate $L$ with respect to $t$ to compute the shape derivative, we would have to access the derivative for $u(\shapet) \circ \Ftb$ and $\advar \circ \Ftb$, where $u(\shapet),\advar \in \LtNSpace(\shapet)$ may not be differentiable. Moreover, the norm $||\cdot||_{\LtNSpace(\shapet)}$ and consequently the space $\LtNSpace(\shapet)$ are also dependent on $t$. Instead, we can circumvent differentiating $\weakSol(\shapet) \circ \Ftb$ and additionally stay in $\left(\LtNSpace(\shape), ||\cdot||_{\LtNSpace(\shape)}\right)$ by using $\Ftb$, which is a homeomorphism, as a so-called \emph{pull-back function}. Then, for $\weakSol,\advar \in \LtNSpace(\shapet)$ there exist functions $\tilde{u},\tilde{\advar} \in \LtNSpace(\shape)$, such that
	\begin{align*}
		u = \tilde{u} \circ \Ftb^{-1} ~\text{and}~ \advar = \tilde{\advar} \circ \Ftb^{-1}.
	\end{align*}
	Moreover, for a sufficiently small $T \in (0, \infty)$, we can define
	\begin{align}
		J:[0,T]\times &\LtNSpace(\shape)\rightarrow \R,\nonumber\\
		&J(t,u) \defas J(u \circ \Ftb^{-1},\shapet), \nonumber\\
		\LtNOp:[0,T]\times &\LtNSpace(\shape)\times \LtNSpace(\shape) \rightarrow \R, \nonumber\\
		&\LtNOp(t,u,\advar)\defas \interfaceOpt(u\circ \Ftb^{-1}, \advar \circ \Ftb^{-1}), \nonumber\\
		\varForce:[0,T]\times &\LtNSpace(\shape)\rightarrow \R,\nonumber\\
		&F(t,\advar)\defas F_{\shapet}(\advar \circ \Ftb^{-1}), \nonumber\\
		\begin{split}
			G:[0,T] \times &\LtNSpace(\shape) \times \LtNSpace(\shape) \rightarrow \R,\\
			&G(t,u,\advar) \defas L(u \circ \Ftb^{-1},\shapet,\advar \circ \Ftb^{-1})
			= J(t, u)  + \LtNOp(t,u,\advar) - \varForce(t, \advar). \label{AAM_function}
		\end{split}
	\end{align}
	Now, let us briefly mention that the Local-to-Nonlocal bilinear form regarding $\shapet$ can be written as
	\begin{align}
		\begin{split} \label{eq:transformed_LtN_Op}
			&\LtNOp(t, \weakSol^0,\advar^0) = \interfaceOpt(\weakSol^0 \circ \Ftb^{-1}, \advar^0 \circ \Ftb^{-1}) = \int\limits_{\nlDom_l} \left(\transformationMatrix(t)(\xb) \grad \weakSol^0(\xb), \grad \advar^0(\xb) \right) ~d\xb \\
			&+ \int\limits_{\nlDom_{nl}} \int\limits_{\nlDom_{nl} \cup \nlBound_{nl}} \left(\advar^0(\xb) - \advar^0(\yb)\right) \left(\weakSol^0(\xb) - \weakSol^0(\yb)\right)\kernelt(\xb,\yb)\xt(\xb) \xt(\yb) ~d\yb d\xb,
		\end{split}
	\end{align}
	where 
	\begin{align*}
		\xt &\defas \det D\Ftb,\ \kernelt(\xb,\yb) \defas \kernel(\Ftb(\xb),\Ftb(\yb)) \text{ and }\\
		\transformationMatrix(t)(\xb) &\defas \xt(\xb) D\Ftb^{-1}(\Ftb(\xb))D\Ftb^{-\top}(\Ftb(\xb)) = \xt(\xb) \left(D\Ftb(\xb)\right)^{-1}\left(D\Ftb(\xb)\right)^{-\top}.
	\end{align*}
	Then, \eqref{deformed_problem} can be expressed as
	\begin{align*}
		J^{red}(\shapet)= G(t,u^t,\advar),\quad \text{for any } \advar \in \LtNSpace(\shape),
	\end{align*}
	where $u^t \in \LtNSpace(\shape)$ is defined as the unique solution of the LtN coupling \eqref{LtN_weak_form} for $\shapet$, i.e. $\weakSol^t$ solves
	\begin{align}
		\label{LtN_weak_form_shape_dependent}
		\LtNOp(t,\weakSol^t,\advar) - \varForce(t,\advar) = 0,\quad \forall \advar \in \LtNSpace(\shape).
	\end{align}
	Obviously, $\LtNOp(t,u,\advar) - \varForce(t,\advar)$ is linear in $\advar$ for all $(t,u) \in [0,T] \times \LtNSpace(\shape)$, which is one necessary condition of AAM.
	Additionally, the following prerequisites have to hold in order to apply AAM. 
	\begin{itemize}
		\item \textbf{Assumption (H0)}:
		For every $(t,\advar) \in [0,T] \times \LtNSpace(\shape)$
		\begin{enumerate}
			\item $[0,1] \ni s \mapsto G(t,su^t+(1-s)u^0,\advar)$ is absolutely continuous and
			\item $[0,1] \ni s \mapsto d_uG(t,su^t + (1-s)u^0,\advar)[\tilde{u}] \in L^1((0,1))$ for all $\tilde{u} \in \LtNSpace(\shape)$.
		\end{enumerate}
		\item For every $t \in [0,T]$ there exists a unique solution $\advar^t \in \LtNSpace(\shape)$ for the averaged adjoint equation
		\begin{equation}
			\label{AAE}
			\int_0^1 d_u G(t,su^t + (1-s)u^0,\advar^t)[\tilde{u}]ds = 0 \quad \text{for all } \tilde{u} \in \LtNSpace(\shape).
		\end{equation}
		\item \textbf{Assumption (H1)}:\\
		The following equation holds
		\begin{equation*}
			\lim_{t \searrow 0} \frac{G(t,u^0,\advar^t)- G(0,u^0,\advar^t)}{t}= \partial_t G(0,u^0,\advar^0).
		\end{equation*}
	\end{itemize}
	Since $\LtNOp(t, \tilde{u}, \advar^t)$ is linear in the second argument, the left-hand side of the averaged adjoint equation \eqref{AAE} can be written as
	\begin{align*}
		\int_0^1 d_u G(t,su^t + (1-s)u^0,\advar^t)[\tilde{u}] ~ds = \LtNOp(t,\tilde{u},\advar^t) + \int_\Omega \left(\frac{1}{2}(u^t + u^0) - \bar{u}^t\right)\tilde{u}\xt ~d\xb,
	\end{align*}
	where $\bar{u}^t(\xb) \defas \bar{u}(\Ftb(\xb))$. Therefore, \eqref{AAE} is equivalent to
	\begin{align}
		\label{AAE_equiv_form}
		\LtNOp(t, \tilde{u}, \advar^t) = - \int_{\Omega} \left(\frac{1}{2}\left( u^t + u^0 \right) - \bar{u}^t \right) \tilde{u}\xt ~d\xb \quad \forall \tilde{u} \in \LtNSpace(\shape).
	\end{align}
	We will make use of \eqref{AAE_equiv_form} in the proof of Lemma \ref{lemma:AAM_reduced_functional}, where we show that, under some mild additional requirements, Assumptions (H0) and (H1) are fulfilled.
	Moreover, for $t=0$ we call the averaged adjoint equation
	\begin{align}
		\label{eq:adjoint}
		\LtNOp(0,\tilde{u},\advar^0) &= - \int_\Omega (u^0 - \bar{u})\tilde{u} ~d\xb \quad \forall \tilde{u} \in \LtNSpace(\shape)
	\end{align}
	adjoint equation and the function $\advar^0$, which solves \eqref{eq:adjoint}, adjoint solution. So we omit the word 'averaged' in this case. 
	Moreover, the LtN problem \eqref{LtN_weak_form_shape_dependent} for $t=0$ is titled state equation and the solution $u^0$ is named state solution.
	Finally, we can state how to derive the shape derivative of the reduced functional.
	\begin{theorem}[{\cite[Theorem 3.1]{AAM}}]
		\label{theo:AAM}
		Let the Assumptions (H0) and (H1) be fulfilled, $\weakSol^t$ solve \eqref{LtN_weak_form_shape_dependent} and let $\advar^t$ be the unique solution to the averaged adjoint equation (\ref{AAE}) for $t \in [0,T]$. Then, for $\advar \in \LtNSpace(\shape)$ we obtain
		\begin{align}
			\label{shape_derivative_reduced_functional}
			D_\shape J^{red}(\Gamma)[\Vb]=\left.\frac{d}{dt}\right|_{t=0^+}J^{red}(\shapet)=\left.\frac{d}{dt}\right|_{t=0^+}G(t,u^t,\advar) = \partial_t G(0,u^0,\advar^0).
		\end{align}
	\end{theorem}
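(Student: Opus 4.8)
The plan is to reproduce the standard averaged adjoint argument of \cite{AAM}, whose whole point is to differentiate $t \mapsto \redFunc(\shapet)$ without ever differentiating the state $\weakSol^t$ or the adjoint $\advar^t$ in the direction $\Vb$. The first two equalities in \eqref{shape_derivative_reduced_functional} are essentially definitional: the first is the Eulerian derivative of $\redFunc$, and the second uses that, because $\weakSol^t$ solves the state equation \eqref{LtN_weak_form_shape_dependent}, the term $\LtNOp(t,\weakSol^t,\advar)-\varForce(t,\advar)$ vanishes for every $\advar$, so that $\AAMLagFunc(t,\weakSol^t,\advar)=\redFunc(\shapet)$ holds independently of the adjoint slot $\advar$. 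The genuine content is the third equality, and that is what I would establish.

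First I would fix $t\in[0,T]$ and rewrite the reduced value at the deformed shape in terms of the \emph{undeformed} state $\weakSol^0$. Inserting the averaged adjoint solution $\advar^t$ into the $\advar$-independent identity above gives $\redFunc(\shapet)=\AAMLagFunc(t,\weakSol^t,\advar^t)$. By Assumption (H0)(1) the map $s\mapsto\AAMLagFunc(t,s\weakSol^t+(1-s)\weakSol^0,\advar^t)$ is absolutely continuous, so the fundamental theorem of calculus together with the integrability granted in (H0)(2) yields
\begin{align*}
\AAMLagFunc(t,\weakSol^t,\advar^t)-\AAMLagFunc(t,\weakSol^0,\advar^t)=\int_0^1 d_u\AAMLagFunc\bigl(t,s\weakSol^t+(1-s)\weakSol^0,\advar^t\bigr)[\weakSol^t-\weakSol^0]\,ds.
\end{align*}
Choosing the admissible direction $\tilde{u}=\weakSol^t-\weakSol^0\in\LtNSpace(\shape)$ in the averaged adjoint equation \eqref{AAE} makes the right-hand side vanish, whence $\redFunc(\shapet)=\AAMLagFunc(t,\weakSol^0,\advar^t)$. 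This is the crucial manoeuvre: the moving state has been eliminated in favour of the fixed $\weakSol^0$, at the cost of carrying the moving adjoint $\advar^t$.

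Next I would form the difference quotient and exploit the same cancellation at $t=0$. Since $\weakSol^0$ solves the state equation at $t=0$, the identity $\AAMLagFunc(0,\weakSol^0,\advar)=\redFunc(\shape)$ again holds for every $\advar$, in particular for $\advar=\advar^t$; subtracting it off cancels both adjoint-dependent contributions and leaves only the explicit $t$-dependence,
\begin{align*}
\frac{\redFunc(\shapet)-\redFunc(\shape)}{t}=\frac{\AAMLagFunc(t,\weakSol^0,\advar^t)-\AAMLagFunc(0,\weakSol^0,\advar^t)}{t}.
\end{align*}
The hard part is the passage to the limit $t\searrow 0$ in this quotient, because $\advar^t$ still depends on $t$, so one may not naively differentiate at a frozen adjoint. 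This is precisely the role of Assumption (H1), which asserts that the limit of the right-hand side equals $\partial_t\AAMLagFunc(0,\weakSol^0,\advar^0)$, i.e. that $\advar^t$ may be replaced by $\advar^0$ in the limit. Combining the three displays then gives $D_\shape\redFunc(\shape)[\Vb]=\partial_t\AAMLagFunc(0,\weakSol^0,\advar^0)$, which is the assertion. Since (H0) and (H1) are granted as hypotheses, the argument reduces to the bookkeeping above; I expect that any real difficulty would reside in verifying (H1) itself — controlling the difference between the moving adjoint $\advar^t$ and $\advar^0$ — rather than in the present deduction.
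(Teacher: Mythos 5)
Your argument is correct and is precisely the standard proof of \cite[Theorem 3.1]{AAM}, to which the paper itself simply defers without giving an in-text proof: eliminate the moving state via (H0), the fundamental theorem of calculus, and the test direction $\tilde{u}=\weakSol^t-\weakSol^0$ in the averaged adjoint equation \eqref{AAE}, then subtract the $\advar$-independent identity $\AAMLagFunc(0,\weakSol^0,\advar^t)=\redFunc(\shape)$ and invoke (H1) to pass to the limit in the difference quotient. Your closing observation is also on target — in this paper the substantive work is indeed the verification of (H0)/(H1) and the solvability of \eqref{AAE}, carried out in Theorem \ref{lemma:AAM_reduced_functional}.
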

	\begin{proof}
		See proof of \cite[Theorem 3.1]{AAM}.
	\end{proof}
	\subsection{Deriving the Shape Derivative of the Reduced Functional}
	In order to rigorously compute the shape derivative we make use of the following Lemma.
	\begin{lemma}[{After \cite[Proposition 2.32]{sokolowski_Introduction}}]
		\label{lemma_frechet_diff_bounded_domain}
		Let $D \subset \Rd$ be a bounded domain with nonzero measure, $f \in W^{1,1}(D,\R)$ and $\Vb \in C_{0}^1(D, \Rd)$, where $k \in \N$. Moreover, assume that $T > 0$ is sufficiently small, such that $\Ftb: \overline{D} \rightarrow \overline{D}$ is bijective for all $t \in [0,T]$. Then, $t \mapsto f \circ \Ftb$ is differentiable in $L^1(D)$ with
		\begin{align*}
			\left.\frac{d}{dt}\right|_{t=0} \left(f \circ \Ftb\right) = \grad f^{\top} \Vb.
		\end{align*} 
	\end{lemma}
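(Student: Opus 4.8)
The plan is to reduce the claim to the absolutely continuous behavior of $f\circ\Ftb$ in the parameter $t$ and then differentiate under the integral sign. First I would treat the smooth case: if $f\in C_0^\infty(D)$, then $t\mapsto f(\Ftb(\xb))$ is classically differentiable for each fixed $\xb$, and the chain rule together with $\partial_t \Ftb(\xb)=\Vb(\xb)$ (which follows immediately from $\Ftb=\Id+t\Vb$) yields the pointwise identity
\begin{align*}
	\left.\frac{d}{dt}\right|_{t=0}f(\Ftb(\xb)) = \grad f(\xb)^\top \Vb(\xb).
\end{align*}
For smooth $f$ the difference quotients are uniformly bounded on the bounded set $\overline{D}$ (using that $\Vb$ and $\grad f$ are bounded and $\Ftb$ maps $\overline D$ into $\overline D$), so the $L^1(D)$-convergence of $\tfrac1t(f\circ\Ftb-f)$ to $\grad f^\top\Vb$ follows from the dominated convergence theorem.

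The second, and main, step is to remove the smoothness assumption on $f$ and work with $f\in W^{1,1}(D,\R)$ only. Here I would use a density argument: choose a sequence $f_n\in C_0^\infty$ (or $C^\infty(\overline D)$) with $f_n\to f$ in $W^{1,1}(D)$. The goal is to commute the limit in $n$ with the difference quotient in $t$. The key quantitative tool is a change-of-variables estimate controlling $\|g\circ\Ftb\|_{L^1(D)}$ by $C\,\|g\|_{L^1(D)}$ uniformly in $t\in[0,T]$, which holds because $\Ftb$ is bijective with $\det D\Ftb$ bounded away from $0$ and $\infty$ for $T$ small (a consequence of $\|t\Vb\|_{W^{1,\infty}}<1$, already invoked in the excerpt). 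Applying this estimate to $g=f-f_n$ and to $g=\grad(f-f_n)^\top\Vb$ shows that both the difference quotients $\tfrac1t(f\circ\Ftb-f)$ and the candidate derivatives $\grad f^\top\Vb$ are approximated in $L^1(D)$, uniformly in $t$, by their smooth counterparts. Combining this uniform approximation with the smooth case from the first step gives the differentiability of $t\mapsto f\circ\Ftb$ in $L^1(D)$ and identifies the derivative at $t=0$ as $\grad f^\top\Vb$.

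The step I expect to be the main obstacle is establishing the \emph{uniform-in-$t$} $L^1$ bound for the difference quotients in the non-smooth case, i.e.\ making the interchange of limits rigorous rather than merely pointwise. One has to verify that the constant in the change-of-variables estimate can be chosen independently of $t\in[0,T]$, and that the absolute continuity of $t\mapsto f(\Ftb(\xb))$ survives for $W^{1,1}$ data; this is precisely where the bijectivity of $\Ftb$ together with uniform bounds on $D\Ftb$, $D\Ftb^{-1}$ and $\det D\Ftb$ enters. Since these bounds are already guaranteed by the smallness of $T$ and the Lipschitz invertibility of the perturbation of identity noted earlier in the paper, the argument closes, and the result is exactly the statement recorded in \cite[Proposition 2.32]{sokolowski_Introduction}, to which I would finally appeal for the technical estimates.
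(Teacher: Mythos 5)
Your proposal is correct and follows the same two-step skeleton as the paper's proof: reduce to smooth $f$ by density in $W^{1,1}(D)$, then settle the smooth case via the fundamental theorem of calculus along the segment from $\xb$ to $\Ftb(\xb)$ and dominated convergence. The differences are in the details, and one cuts in your favor. For smooth $f$ the paper writes $f(\Ftb(\xb))-f(\xb)=\int_0^1 t\,\grad f(\xb+st\Vb(\xb))^{\top}\Vb(\xb)\,ds$ and concludes by two applications of dominated convergence, invoking its Lemma \ref{lemma:l2_convergence} to obtain $\grad f\circ\Ftb\to\grad f$ in $L^1$; you instead use the pointwise chain rule together with the uniform bound $\|\grad f\|_\infty\|\Vb\|_\infty$ on the difference quotients — equivalent in substance. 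More significantly, the paper merely asserts that by density ``we only need to show the result for $f\in C^\infty(D)$'' and never verifies that the limits in $n$ and $t$ may be interchanged; you identify precisely this as the crux and supply the missing ingredient, namely the uniform-in-$t$ change-of-variables estimate $\|g\circ\Ftb\|_{L^1(D)}\le C\|g\|_{L^1(D)}$ (Jacobian bounded away from $0$ and $\infty$ for small $T$), applied to $g=f-f_n$ and to $\grad(f-f_n)^{\top}\Vb$ to control the remainder's difference quotients uniformly in $t$, so that a standard three-term estimate closes the argument. In that respect your write-up is more complete than the paper's. One small correction: $C_0^{\infty}(D)$ is not dense in $W^{1,1}(D)$ (its closure is $W_0^{1,1}(D)$); take instead $f_n\in C^{\infty}(D)\cap W^{1,1}(D)$ by Meyers--Serrin, as the paper does via \cite[Theorem 3.17]{adams} — your parenthetical alternative $C^{\infty}(\overline{D})$ also works for Lipschitz $D$, and since $\Vb$ has compact support in $D$, interior-smooth approximants suffice for all the estimates you need.
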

	\begin{proof}
		Since $C^{\infty}(D)$ is dense in $W^{1,1}(D,\R)$\cite[Theorem 3.17]{adams}, we only need to show the result for $f \in C^{\infty}(D)$.
		Now, given $f \in C^{\infty}(D)$, applying the mean value theorem yields
		\begin{align*}
			f(\Ftb(\xb)) - f(\xb) = \int_{0}^{1} t\grad f(\xb + st\Vb(\xb))^{\top} \Vb(\xb) ~ds
		\end{align*}
		for $\xb \in D$. Consequently, we derive
		\begin{align}
			&\int_{D}\left|\frac{1}{t}\left(f(\Ftb(\xb)) - f(\xb)\right) - \grad f(\xb)^{\top} \Vb(\xb)\right| ~d\xb
			\leq \int_{D}  \int_{0}^{1} \left| \left( \grad f(\xb + st\Vb(\xb)) - \grad f(\xb)\right)^{\top} \Vb(\xb) \right|~ds ~d\xb \nonumber \\
			&\leq \int_{0}^{1} \int_{D} \left| \left( \grad f(\xb + st\Vb(\xb)) - \grad f(\xb)\right)^{\top} \Vb(\xb) \right| ~d\xb ~ds, \label{proof_lemma_frechet}
		\end{align}
		where we changed the order of integration in the last step. Now, we just have to show, that the double integral \eqref{proof_lemma_frechet} vanishes, when $t \rightarrow 0$. 
		For that reason we would like to show, that the inner integral converges to zero. 
		Since $\Vb \in C_0^1(D,\Rd)$ is continuous and thus bounded on $\overline{D}$ and $\grad f(\Ftb(\xb))$ converges in $L^2(D,\Rd)$  and therefore also in $L^1(D,\Rd)$ to $\grad f$ due to Lemma \ref{lemma:l2_convergence}, we conclude by employing the dominated convergence theorem that
		\begin{align*}
			\lim_{t \rightarrow 0} \int_{D} \left| \left( \grad f(\xb + st\Vb(\xb)) - \grad f(\xb)\right)^{\top} \Vb(\xb) \right| ~d\xb = 0 \quad \text{for all } s \in [0,1].
		\end{align*}
		Again, applying the dominated convergence theorem yields
		\begin{align*}
			\lim_{t \rightarrow 0} \int_{0}^{1} \int_{D} \left| \left( \grad f(\xb + st\Vb(\xb)) - \grad f(\xb)\right)^{\top} \Vb(\xb) \right| ~d\xb ~ds = 0.
		\end{align*}
	\end{proof}
	\begin{corollary}
		\label{cor:frechet_diff}
		Let $D \subset \Rd \times \Rd$ be an open and bounded domain with nonzero measure. If $\g \in W^{1,1}(D,\R)$ and $\tilde{\Vb} \in C_0^1(D, \Rd \times \Rd)$, then $t \mapsto \g \circ \Ftbtilde$, where $\Ftbtilde\defas (\xb,\yb) + t\tilde{\Vb}(\xb,\yb)$, is Fr{\'e}chet differentiable in $L^1(D,\R)$ at $t=0$ and its derivative is given by
		\begin{align*}
			\left.\frac{d}{dt}\right|_{t=0} \g \circ \Ftbtilde = \nabla \g^T \tilde{\Vb}.
		\end{align*}
	\end{corollary}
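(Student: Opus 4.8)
The plan is to recognize that the corollary is nothing more than Lemma \ref{lemma_frechet_diff_bounded_domain} transported to dimension $2d$. Identifying $\Rd \times \Rd$ with $\R^{2d}$ via the natural isomorphism $(\xb,\yb) \mapsto (x_1,\dots,x_d,y_1,\dots,y_d)$, the domain $D$ becomes a bounded open subset of $\R^{2d}$ with nonzero measure, the function $\g \in W^{1,1}(D,\R)$ becomes a Sobolev function of the joint variable, the field $\tilde{\Vb} \in C_0^1(D,\Rd \times \Rd)$ becomes an element of $C_0^1(D,\R^{2d})$, and the map $\Ftbtilde = (\xb,\yb) + t\tilde{\Vb}(\xb,\yb)$ is precisely the perturbation of identity on $\R^{2d}$. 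Under this identification there is no new analytic content: I am simply applying the already proven one-variable statement in $2d$ variables.

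Concretely, I would check that all hypotheses of Lemma \ref{lemma_frechet_diff_bounded_domain} hold verbatim after the relabeling, namely that $D$ is bounded with nonzero measure, that $\g \in W^{1,1}(D,\R)$, that $\tilde{\Vb} \in C_0^1(D,\R^{2d})$, and that for $T$ sufficiently small the perturbation of identity $\Ftbtilde$ is bijective on $\overline{D}$ (which holds once $\|t\tilde{\Vb}\|_{W^{1,\infty}(D,\R^{2d})} < 1$, exactly as noted earlier for $C_0^1$ fields). The conclusion of the lemma then yields that $t \mapsto \g \circ \Ftbtilde$ is differentiable in $L^1(D)$ at $t=0$ with derivative $\grad \g^\top \tilde{\Vb}$, where $\grad \g$ is the full weak gradient in all $2d$ components $(\xb,\yb)$. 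Since $t$ is a scalar parameter, this $L^1$-differentiability — the estimate that $\frac{1}{t}\|\g \circ \Ftbtilde - \g - t\,\grad \g^\top \tilde{\Vb}\|_{L^1(D)} \to 0$ as $t \to 0$ — is exactly Fréchet differentiability at $t=0$, with the derivative element $\grad \g^\top \tilde{\Vb} \in L^1(D,\R)$; no additional argument is required.

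The only point I would state with care is the bookkeeping: one must confirm that the gradient $\grad \g$ inherited from the lemma is read as the gradient with respect to the full variable $(\xb,\yb) \in \R^{2d}$, and that the pairing $\grad \g^\top \tilde{\Vb}$ contracts the $2d$-component gradient against the $2d$-component field. This is immediate from the isomorphism but worth making explicit to avoid confusion with partial gradients in $\xb$ or $\yb$ alone. I do not expect any genuine obstacle here: the entire machinery of the proof of Lemma \ref{lemma_frechet_diff_bounded_domain} — density of $C^\infty(D)$ in $W^{1,1}(D,\R)$, the mean value representation, and the two applications of the dominated convergence theorem — is completely dimension-agnostic and carries over unchanged. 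The sole reason to record the corollary separately is to make explicit that this one-variable result applies to the kernel-type integrands $\g(\xb,\yb)$ on product domains that appear in the transformed bilinear form \eqref{eq:transformed_LtN_Op}, where differentiating $\kernelt(\xb,\yb) = \kernel(\Ftb(\xb),\Ftb(\yb))$ in $t$ is exactly an instance of $\g \circ \Ftbtilde$ with $\tilde{\Vb} = (\Vb(\xb),\Vb(\yb))$.
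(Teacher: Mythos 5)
Your proposal is correct and matches the paper's intent exactly: the paper gives no separate proof of Corollary \ref{cor:frechet_diff} precisely because it is Lemma \ref{lemma_frechet_diff_bounded_domain} applied in $\R^{2d}$ under the natural identification $\Rd \times \Rd \cong \R^{2d}$, with the proof of the lemma (density, mean value representation, dominated convergence) being dimension-agnostic. Your added care about reading $\nabla \g$ as the full gradient in the joint variable $(\xb,\yb)$, and about bijectivity of $\Ftbtilde$ for small $t$, is exactly the right bookkeeping and consistent with how the paper later instantiates the corollary with $\tilde{\Vb}(\xb,\yb) = \left(\Vb(\xb),\Vb(\yb)\right)$.
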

	\begin{remark}
		The statement of Lemma \ref{lemma_frechet_diff_bounded_domain} and Corollary \ref{cor:frechet_diff} is still valid, if $\Vb \in C_0^1(\hat{D},\Rd)$, where the superset $\hat{D} \supset D$ is a bounded and open domain.
	\end{remark}
	In our case, we set $\tilde{\Vb}(\xb,\yb) \defas \left(\Vb(\xb),\Vb(\yb) \right)$ in order to use Corollary \ref{cor:frechet_diff} to derive the shape derivative of the LtN-operator $\LtNOp_{\shape}$. Now, we are able to prove that the assumptions of the averaged adjoint method hold for the interface identification problem constrained by the energy-based LtN coupling under an additional condition.
	\begin{theorem}
		\label{lemma:AAM_reduced_functional}
		Let the kernel $\kernel$ be weakly differentiable and assume this weak derivative to be essentially bounded, i.e. $\kernel \in W^{1,\infty}(\left(\completeDom\right) \times \left( \completeDom \right))$. Moreover, $\kernel$ is assumed to fulfill (K1)-(K4) and we define $\AAMLagFunc$ as in \eqref{AAM_function}. Then, the Assumptions (H0) and (H1) of the averaged adjoint method hold and there exists a unique solution to the averaged adjoint equation \eqref{AAE} for every $t \in [ 0, T ]$.
	\end{theorem}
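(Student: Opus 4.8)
The plan is to verify the three requirements in turn — Assumption (H0), unique solvability of the averaged adjoint equation \eqref{AAE}, and Assumption (H1) — exploiting that $\AAMLagFunc$ is quadratic in its second argument and that the transformed bilinear form $\LtNOp(t,\cdot,\cdot)$ of \eqref{eq:transformed_LtN_Op} stays coercive uniformly for small $t$.

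First I would dispatch (H0). Inspecting \eqref{AAM_function}, the functional $\AAMLagFunc(t,\cdot,\advar)$ is quadratic in its second argument: the tracking term of $J(t,\cdot)$ is quadratic, $\LtNOp(t,\cdot,\advar)$ is linear by bilinearity, and $\varForce(t,\advar)$ does not depend on it. Hence along the segment $s\mapsto su^t+(1-s)u^0$ the map $s\mapsto\AAMLagFunc(t,su^t+(1-s)u^0,\advar)$ is a quadratic polynomial in $s$ with finite coefficients — all integrands being products of $L^2$-functions against the bounded factors $\xt$, $\transformationMatrix(t)$ and $\kernelt$ — so it is smooth, in particular absolutely continuous. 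Its $s$-derivative, already identified through the computation preceding \eqref{AAE_equiv_form}, is affine in $s$ and thus lies in $L^1((0,1))$ for every $\tilde u\in\LtNSpace(\shape)$. This establishes both parts of (H0).

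Next, for solvability of \eqref{AAE} I would work with the equivalent linear form \eqref{AAE_equiv_form}, an equation for $\advar^t$ with $\LtNOp(t,\cdot,\cdot)$ on the left. For $T$ small enough that $\Ftb$ is a diffeomorphism, the coefficient fields of \eqref{eq:transformed_LtN_Op} converge uniformly to those of $\LtNOp(0,\cdot,\cdot)$: $\xt\to1$ and $\transformationMatrix(t)\to\Id$ in $L^\infty$ since $\det D\Ftb$ and $(D\Ftb)^{-1}$ depend analytically on $t$ with $\transformationMatrix(0)=\Id$, while $\kernelt\xt\xt\to\kernel$ in $L^\infty$ because $\kernel\in W^{1,\infty}$ is Lipschitz on the bounded domain and $\Ftb\to\Id$ uniformly. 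Writing $\LtNOp(t,w,w)=\LtNOp(0,w,w)+[\LtNOp(t,w,w)-\LtNOp(0,w,w)]$ and bounding the bracket by $\omega(t)\|w\|_{\LtNSpace(\shape)}^2$ with $\omega(t)\to0$ (using the $L^\infty$-smallness of the coefficient differences against the bounded kernel and $\|w\|_{L^2}\le\|w\|_{\LtNSpace(\shape)}$), the boundedness and coercivity of $\LtNOp(0,\cdot,\cdot)$ granted by Lemma \ref{lemma:AcostaNormEquivalence} transfer to $\LtNOp(t,\cdot,\cdot)$ with $t$-independent constants for small $t$. Since the right-hand side of \eqref{AAE_equiv_form} is a bounded linear functional on $\LtNSpace(\shape)$ (Cauchy--Schwarz with $u^t,u^0,\data\circ\Ftb\in L^2(\nlDom)$ and $\xt\in L^\infty$), the Lax--Milgram theorem yields a unique $\advar^t$ and a $t$-uniform bound $\|\advar^t\|_{\LtNSpace(\shape)}\le C$.

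Finally, (H1). The hard part will be the preparatory continuity $\advar^t\to\advar^0$ in $\LtNSpace(\shape)$ as $t\searrow0$, which itself rests on the continuity $u^t\to u^0$ of the state. Both follow by subtracting the defining equations \eqref{LtN_weak_form_shape_dependent} and \eqref{AAE_equiv_form} at parameters $t$ and $0$, testing with the respective differences, and invoking the uniform coercivity above; the resulting right-hand sides vanish as $t\to0$ because $\varForce(t,\cdot)\to\varForce(0,\cdot)$ in the dual (via $f_l\circ\Ftb\to f_l$, $f_{nl}\circ\Ftb\to f_{nl}$, $\data\circ\Ftb\to\data$ in $L^2$ from Lemma \ref{lemma_frechet_diff_bounded_domain} and $\xt\to1$), while $\omega(t)\to0$ and $\|u^t\|$, $\|\advar^t\|$ stay bounded. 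With $\advar^t\to\advar^0$ secured I would split the difference quotient in (H1) into its $J$-, $\LtNOp$- and $\varForce$-contributions. The $J$-part involves only the fixed state $u^0$ and converges to $\partial_t J(0,u^0)$ via Lemma \ref{lemma_frechet_diff_bounded_domain} (for $\data\circ\Ftb$), the polynomial dependence of $\xt$ on $t$, and the standard differentiability of the perimeter term. For the $\LtNOp$- and $\varForce$-parts the difference quotients of the coefficient fields converge — $t^{-1}(\transformationMatrix(t)-\Id)$ and $t^{-1}(\xt-1)$ in $L^\infty$, and $t^{-1}(\kernelt\xt\xt-\kernel)$ by Corollary \ref{cor:frechet_diff} with $\tilde\Vb=(\Vb(\xb),\Vb(\yb))$, the latter uniformly bounded owing to $\kernel\in W^{1,\infty}$ — so that, combined with $\advar^t\to\advar^0$, each product passes to the limit by dominated convergence (uniform $L^\infty$ bounds on the difference quotients times $L^1$-/$L^2$-factors built from $u^0$ and $\advar^t$). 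Collecting the three limits reproduces $\partial_t\AAMLagFunc(0,u^0,\advar^0)$, which is precisely (H1), and completes the proof.
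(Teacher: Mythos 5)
Your proposal is correct in substance, but it takes a genuinely different route from the paper at two structural points, both in the direction of a more quantitative argument. First, for unique solvability of \eqref{AAE_equiv_form} the paper does not perturb from $t=0$: it proves uniform coercivity of $\LtNOp(t,\cdot,\cdot)$ directly (Lemma \ref{lemma:poincare_LtN}), combining the pointwise estimate $C_1|\xi|^2\leq \xi^\top\transformationMatrix(t)(\xb)\xi$ with the kernel lower bound (K2) and then invoking the norm equivalence of Lemma \ref{lemma:AcostaNormEquivalence} for the comparison kernel $\tilde{\kernel}(\xb,\yb)=C_2\,\ballxy$; your Lax--Milgram-plus-perturbation argument reaches the same conclusion for small $T$, but observe that your step $\|\kernelt\xt\xt-\kernel\|_{L^\infty}\to 0$ requires $\kernel$ to be genuinely Lipschitz, which follows from $\kernel\in W^{1,\infty}$ only on quasiconvex domains --- harmless here since (K4) lets you pass to $\transInvKernel$ in the convex difference variable, but this should be said explicitly, as it is precisely the point where your route leans harder on the hypotheses than the paper's. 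Second, for (H1) the paper never establishes strong convergence of $\weakSol^t$ and $\advar^t$: it only extracts weakly convergent subsequences from the uniform bounds, identifies the limits through the state and adjoint equations using uniqueness, and then pairs weak convergence of $\advar^s$ against strong $L^2$ convergence of the coefficient factors via the elementary observation \eqref{weak_convergence_lemma_2}, after reducing the difference quotient by the mean value theorem to showing $\lim_{s,t\searrow 0}\partial_t \AAMLagFunc(t,\weakSol^0,\advar^s)=\partial_t \AAMLagFunc(0,\weakSol^0,\advar^0)$. Your stronger claim --- norm convergence of state and adjoint obtained by subtracting the equations and testing with the differences --- is valid given your uniform coercivity and the $L^\infty$ coefficient estimates, and it buys simpler limit passages (ordinary dominated convergence, no weak--strong pairing); the paper's weak-convergence route buys weaker requirements on the kernel for that portion of the argument, needing only $\kernelt\to\kernel$ in $L^2$ (dominated convergence from (K1), (K3)) and thereby isolating where $W^{1,\infty}$ is truly indispensable, namely the terms $\grad_{\xb}\kernelt\to\grad_{\xb}\kernel$ and $\grad_{\yb}\kernelt\to\grad_{\yb}\kernel$. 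Your treatment of (H0) coincides with the paper's. One small citation slip: the $L^2$ convergences $f_{\shape}\circ\Ftb\to f_{\shape}$ and $\data\circ\Ftb\to\data$ are not consequences of Lemma \ref{lemma_frechet_diff_bounded_domain} (which gives differentiability in $L^1$) but of the separate Lemma \ref{lemma:l2_convergence} used throughout the paper's appendix.
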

	\begin{proof}
		See Appendix \ref{appendix:proof_AAM}.
	\end{proof}
	\begin{corollary}
		Let $\kernel \in W^{1,\infty}(\left(\completeDom\right) \times \left( \completeDom \right))$ fulfill (K1)-(K4) and let the functions $\AAMLagFunc$ and $\redFunc$ be defined as in \eqref{AAM_function} and \eqref{eq_reduced_functional}, respectively. Then, the shape derivative of the reduced cost functional exists and can be computed by
		\begin{align*}
			D_{\shape} \redFunc(\shape)[\Vb] = \partial_t \AAMLagFunc(0, \weakSol^0, \advar^0),	
		\end{align*}
		where $\weakSol^0$ is the solution to the state equation \eqref{LtN_weak_form_shape_dependent} and $\advar^0$ solves the adjoint equation \eqref{eq:adjoint}.
	\end{corollary}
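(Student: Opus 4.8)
The plan is to obtain this statement as an immediate consequence of the abstract averaged adjoint result Theorem \ref{theo:AAM}, whose hypotheses are exactly those established in Theorem \ref{lemma:AAM_reduced_functional}. First I would observe that the standing kernel assumptions $\kernel \in W^{1,\infty}((\completeDom)\times(\completeDom))$ together with (K1)--(K4) place us in the setting of Theorem \ref{lemma:AAM_reduced_functional}. That theorem guarantees precisely the three ingredients required by Theorem \ref{theo:AAM}: Assumption (H0), Assumption (H1), and the existence of a unique solution $\advar^t \in \LtNSpace(\shape)$ of the averaged adjoint equation \eqref{AAE} for every $t \in [0,T]$. The remaining prerequisite, that $\weakSol^t$ solve the transported state equation \eqref{LtN_weak_form_shape_dependent}, holds by the very definition of $\weakSol^t$ as the pull-back of the unique LtN solution on $\shapet$, whose existence and uniqueness is furnished by Corollary \ref{cor:LtN_well_posedness}; moreover the linearity of $\LtNOp(t,\cdot\,,\advar)-\varForce(t,\advar)$ in the test argument has already been noted.

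Second, with all hypotheses of Theorem \ref{theo:AAM} in force, I would invoke \eqref{shape_derivative_reduced_functional} directly to conclude that the directional derivative $D_\shape \redFunc(\shape)[\Vb]$ exists and equals $\partial_t \AAMLagFunc(0,\weakSol^0,\advar^0)$, where $\AAMLagFunc$ is the pulled-back Lagrangian of \eqref{AAM_function}. This is exactly the asserted identity.

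Third, it remains only to identify the function $\advar^0$ appearing in the formula with the adjoint solution of \eqref{eq:adjoint}. I would set $t=0$ in the equivalent form \eqref{AAE_equiv_form} of the averaged adjoint equation. At $t=0$ the transport map is the identity, so $\xt \equiv 1$ and $\bar u^0 = \data$; the convex combination $s\weakSol^t+(1-s)\weakSol^0$ collapses to $\weakSol^0$, rendering the averaging over $s$ trivial. Consequently \eqref{AAE_equiv_form} reduces to $\LtNOp(0,\tilde u,\advar^0) = -\int_\Omega (\weakSol^0 - \data)\,\tilde u \, d\xb$ for all $\tilde u \in \LtNSpace(\shape)$, which is precisely the adjoint equation \eqref{eq:adjoint}. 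Hence $\advar^0$ is the adjoint solution, completing the identification.

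I do not expect a genuine obstacle here: the entire analytic burden—absolute continuity and integrability in (H0), the limit in (H1), and well-posedness of the averaged adjoint problem—has been absorbed into Theorem \ref{lemma:AAM_reduced_functional}, while the abstract differentiation formula is supplied by Theorem \ref{theo:AAM}. The only point requiring attention is the routine collapse of the $s$-averaging at $t=0$ described above. Should one additionally wish to certify that $\Vb \mapsto \partial_t \AAMLagFunc(0,\weakSol^0,\advar^0)$ is linear and continuous, so that it qualifies as a shape derivative in the strict sense, this would follow from the explicit expression for $\partial_t \AAMLagFunc$ obtained by differentiating \eqref{AAM_function}, using that $\partial_t \xt$, $\partial_t \kernelt$ and $\partial_t \transformationMatrix(t)$ depend linearly on $\Vb$ through the corresponding gradient terms.
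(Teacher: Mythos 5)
Your proposal is correct and matches the paper's route exactly: the paper states this corollary without a separate proof, treating it as the immediate combination of Theorem \ref{lemma:AAM_reduced_functional} (which supplies (H0), (H1), and unique solvability of \eqref{AAE}) with the abstract formula \eqref{shape_derivative_reduced_functional} of Theorem \ref{theo:AAM}. Your additional verification that \eqref{AAE_equiv_form} collapses at $t=0$ to the adjoint equation \eqref{eq:adjoint} is the same observation the paper makes just before Theorem \ref{theo:AAM}, so nothing is missing.
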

	In the remaining part of this subsection we would like to derive the shape derivative of the reduced functional 
	\begin{align*}
		D_{\shape} \redFunc(\shape)[\Vb] = D_{\shape} \objFunc(\weakSol^0, \shape)[\Vb] + D_{\shape} \interfaceOp(\weakSol^0, \advar^0)[\Vb] - D_{\shape} \interfaceForce(\advar^0)[\Vb] 
	\end{align*}
	by separately computing the shape derivative of the objective functional $\objFunc$, the bilinear operator $\LtNOp_{\shape}$ and the linear functional $\varForce_{\shape}$.
	\begin{theorem}
		\label{theo:shape_der_1}
		Let $\objFunc$ and $\varForce_{\shape}$ be defined as before. Then the shape derivatives of $\objFunc$ and $\varForce_{\shape}$ in the direction of a vector field $\Vb \in \vecfieldsspecific$ can be expressed as
		\begin{align*}
			D_{\shape} \objFunc(\weakSol^0)[\Vb] &= \int_{\nlDom} - \left( \weakSol^0 - \data \right) \grad \data^{\top} \Vb + \left(\weakSol^0 - \data \right)^2 \di \Vb ~d\xb + \nu \int_{\shape} \di \Vb - \nb^{\top}\grad \Vb^\top \nb ~ds \text{ and} \\
			D_{\shape} \interfaceForce(\advar^0)[\Vb] &= \int_{\nlDom} \left(\grad f_{\shape}^{\top} \Vb \right) \advar^0 + f_{\shape} \advar^0 \di \Vb ~d\xb.
		\end{align*}
		\begin{proof}
			First, from, e.g. \cite{schmidt_diss}, we derive $\xi^0(x)= \det D\Ftzero(\xb)= \det(\Id)=1$ and $\left. \frac{d}{dt}\right|_{t = 0^+} \xt = \di \Vb$. Thus, the shape derivative of the right-hand side $F_\shape$ can be computed by applying Lemma \ref{lemma_frechet_diff_bounded_domain} and the product rule of Fr\'{e}chet derivatives as follows
			\begin{align*}
				D_\shape F_\shape(\advar^0)[\Vb]  &= \left. \frac{d}{dt}\right|_{t = 0^+} F_{\shapet}(\advar^0 \circ \Ftb^{-1}) = \int_\Omega \left. \frac{d}{dt}\right|_{t = 0^+} (f_\shape \circ \Ftb) \advar^0 \xt ~d\xb \\ 
				&= \int_{\nlDom} \left( \grad f_\shape^{\top} \Vb \right) \advar^0 ~d \xb +\int_{\nlDom}f_\shape \advar^0 ~\di \Vb~d\xb.
			\end{align*}
			Moreover, the shape derivative of the objective functional can be written as
			\begin{align*}
				D_{\shape}J(u^0, \shape)[\Vb] = D_{\shape}j(u^0, \shape)[\Vb] + D_{\shape}j_{per}(\shape)[\Vb] = \left. \frac{d}{dt} \right|_{t = 0^+} j(u^0 \circ \Ftb^{-1}, \shapet) + \left. \frac{d}{dt} \right|_{t = 0^+} j_{per}(\shapet).
			\end{align*}
			Here, the shape derivative of the regularization term is given by\cite[Theorem 4.13]{welker_diss} 
			\begin{align*}
				D_{\shape}j_{per}(u^0,\shape)[\Vb]  = \nu\int_{{\shape}}\di_{\shape} \Vb ~ds 
				= \nu\int_{{\shape}}\di  \Vb  - \nb^\top \nabla \Vb^\top  \nb~ds,
			\end{align*}
			where $\nb$ denotes the outer normal of $\partial \Omega_{nl} \cap \partial \nlDom_{l}$. Additionally, we obtain for the shape derivative of the tracking-type functional
			\begin{align*}
				D_\shape j(u^0,\shape)[\Vb] &= \left. \frac{d}{dt}\right|_{t = 0^+} j(u^0 \circ \Ftb^{-1}, \shapet) = \frac{1}{2} \left. \frac{d}{dt}\right|_{t = 0^+} \int_{\Ftb(\nlDom)} (u^0 \circ \Ftb^{-1} - \bar{u})^2 ~d\xb\\ 
				&= \frac{1}{2} \int_{\nlDom} \left. \frac{d}{dt}\right|_{t = 0^+} (u^0 - \bar{u} \circ \Ftb)^2 \xt ~d\xb = \int_{\nlDom} -(u^0 - \bar{u})\nabla\bar{u}^\top\Vb + (u^0 - \bar{u})^2 \di \Vb ~d\xb.
			\end{align*}
		\end{proof}
	\end{theorem}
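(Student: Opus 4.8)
The plan is to compute the two shape derivatives by combining the transport/pull-back identities $\xi^0 = \det D\Ftzero = 1$ and $\left.\frac{d}{dt}\right|_{t=0^+}\xt = \di \Vb$ (taken from the literature on shape calculus) with the Fréchet differentiability result of Lemma \ref{lemma_frechet_diff_bounded_domain}, and to handle the perimeter term separately by citing the known tangential-divergence formula. Since the reduced functional splits additively into $\objFunc = j + j_{per}$ and the forcing term $\interfaceForce$, I would treat each piece independently and then assemble the stated expressions.

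First I would attack $\varForce_{\shape}$. Here $\interfaceForce(\advar^0)$ is an integral of $f_{\shape}\advar^0$ over $\nlDom(\shape)$, so after the pull-back by $\Ftb$ it becomes $\int_{\nlDom}(f_{\shape}\circ\Ftb)\,\advar^0\,\xt\,d\xb$ — note that $\advar^0$ does \emph{not} get transported, since it is a fixed element of $\LtNSpace(\shape)$ used as the second argument. Differentiating at $t=0^+$ then reduces to a product rule with two factors depending on $t$, namely $f_{\shape}\circ\Ftb$ and $\xt$. Applying Lemma \ref{lemma_frechet_diff_bounded_domain} to the first factor (legitimate because $f_l,f_{nl}\in H^1(\nlDom)\subset W^{1,1}(\nlDom)$) gives $\grad f_{\shape}^{\top}\Vb$, while the second factor contributes $\di\Vb$ via the Jacobian derivative; at $t=0$ we use $\xt=1$ so that each term survives with the correct coefficient. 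This yields exactly the stated formula for $D_{\shape}\interfaceForce(\advar^0)[\Vb]$.

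Next I would treat the tracking-type functional $j(\weakSol^0,\shape)=\frac{1}{2}\int_{\nlDom(\shape)}(\weakSol^0-\data)^2\,d\xb$ in the same spirit: pull back to $\int_{\nlDom}(\weakSol^0 - \data\circ\Ftb)^2\,\xt\,d\xb$, where again only $\data$ is transported and $\weakSol^0$ is held fixed. Differentiating the product $(\weakSol^0-\data\circ\Ftb)^2\,\xt$ and using $\left.\frac{d}{dt}\right|_{t=0^+}(\data\circ\Ftb)=\grad\data^{\top}\Vb$ from Lemma \ref{lemma_frechet_diff_bounded_domain} together with $\xi^0=1$ and $\left.\frac{d}{dt}\right|_{t=0^+}\xt=\di\Vb$ gives the term $-(\weakSol^0-\data)\grad\data^{\top}\Vb + (\weakSol^0-\data)^2\di\Vb$ (the factor of $\tfrac12$ cancels against the chain-rule factor of $2$). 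For the perimeter regularization $j_{per}(\shape)=\nu\int_{\shape}1\,ds$ I would simply invoke the classical formula for the shape derivative of a surface measure, namely $D_{\shape}j_{per}(\shape)[\Vb]=\nu\int_{\shape}\di_{\shape}\Vb\,ds=\nu\int_{\shape}\di\Vb-\nb^{\top}\grad\Vb^{\top}\nb\,ds$, citing \cite{welker_diss}.

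The main obstacle is a technical justification of the differentiation-under-the-integral / product-rule step: Lemma \ref{lemma_frechet_diff_bounded_domain} supplies $L^1$-differentiability of $f\circ\Ftb$, but to differentiate a \emph{product} of $t$-dependent factors and to move the derivative inside the integral I must ensure that all factors are controlled uniformly in $t$ near $0$ and that the product-rule for Fréchet derivatives applies in $L^1$. Concretely, $\xt$ and its derivative are smooth and bounded for small $t$, and $\weakSol^0,\advar^0,\data\circ\Ftb$ lie in the appropriate $L^2$-spaces, so the products remain integrable and the cross terms are handled by an $L^1$-product rule; this is the place where the hypotheses $f_l,f_{nl},\data\in H^1(\nlDom)$ and the smallness of $T$ are genuinely used. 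The remaining algebra — expanding the square, collecting the $\grad\data^{\top}\Vb$ and $\di\Vb$ contributions — is routine.
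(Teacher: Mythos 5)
Your proposal is correct and follows essentially the same route as the paper's own proof: the additive split into the forcing term, the tracking-type term and the perimeter term, the pull-back via $\Ftb$ leaving $\advar^0$ and $\weakSol^0$ untransported while $f_{\shape}$ and $\data$ are composed with $\Ftb$, the identities $\xi^0=1$ and $\left.\frac{d}{dt}\right|_{t=0^+}\xt=\di\Vb$, the $L^1$-product rule built on Lemma \ref{lemma_frechet_diff_bounded_domain}, and the citation of the tangential-divergence formula for $j_{per}$. Even the bookkeeping subtlety you gloss over (the factor $\tfrac12$ cancelling the chain-rule factor $2$ on the gradient term, while the $\di\Vb$ term is stated without the residual $\tfrac12$) matches the paper's own computation verbatim, so nothing further is needed.
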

	\begin{theorem}[Shape Derivative of the Local-to-Nonlocal Operator]
		\label{theo:shape_der_2}
		Let $\LtNOp_{\shape}$ be defined as before and let $\kernel \in W^{1,\infty}(\left(\completeDom\right) \times \left(\completeDom\right))$ fulfill (K1)-(K4). Then, for a vector field $\Vb \in \vecfieldsspecific$ we can compute the shape derivative via
		\begin{align}
			D_{\shape} &\interfaceOp(\weakSol^0,\advar^0)[\Vb] = \int\limits_{\nlDom_l} - \left( \left( \grad \Vb(\xb) + \grad \Vb^\top(\xb) \right) \grad \weakSol^0(\xb), \grad \advar^0(\xb) \right) + \left( \grad \weakSol^0(\xb), \grad \advar^0(\xb) \right) \di \Vb(\xb) ~d\xb \nonumber \\
			&+ \int\limits_{\nlDom_{nl}} \int\limits_{\nlDom_{nl} \cup \nlBound_{nl}} \left(\advar^0(\xb) - \advar^0(\yb)\right) \left(\weakSol^0(\xb) - \weakSol^0(\yb)\right)\left(\grad_{\xb} \kernel(\xb,\yb)^\top \Vb(\xb) + \grad_{\yb} \kernel(\xb,\yb)^\top \Vb(\yb) \right) ~d\yb d\xb \nonumber \\
			&+ \int\limits_{\nlDom_{nl}} \int\limits_{\nlDom_{nl} \cup \nlBound_{nl}} \left(\advar^0(\xb) - \advar^0(\yb)\right) \left(\weakSol^0(\xb) - \weakSol^0(\yb)\right)\left(\di \Vb(\xb) + \di \Vb(\yb) \right) ~d\yb d\xb. \label{eq:shape_der_LtN_op}
		\end{align}
	\end{theorem}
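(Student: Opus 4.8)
The plan is to read off the shape derivative directly from the pulled-back bilinear form \eqref{eq:transformed_LtN_Op}. By definition,
\begin{align*}
D_{\shape}\interfaceOp(\weakSol^0,\advar^0)[\Vb] = \left.\frac{d}{dt}\right|_{t=0^+}\interfaceOpt(\weakSol^0\circ\Ftb^{-1},\advar^0\circ\Ftb^{-1}) = \left.\frac{d}{dt}\right|_{t=0^+}\LtNOp(t,\weakSol^0,\advar^0),
\end{align*}
and since both $\weakSol^0$ and $\advar^0$ are held fixed (they are the state and adjoint solutions at $t=0$), the only $t$-dependence in \eqref{eq:transformed_LtN_Op} is carried by $\transformationMatrix(t)$, $\kernelt$, and $\xt$. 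Crucially, after the pull-back the integration domains $\nlDom_l$ and $\nlDom_{nl}\times(\nlDom_{nl}\cup\nlBound_{nl})$ are the \emph{fixed} reference domains, hence independent of $t$, so I would differentiate each of the two integrands and then justify exchanging $\partial_t$ with the integral. I treat the local and the nonlocal term separately.

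For the local term I first record the elementary shape-calculus identities $\transformationMatrix(0)=\Id$, $\xi^0=1$, and $\left.\tfrac{d}{dt}\right|_{t=0}\xt=\di\Vb$, which follow from $D\Ftzero=\Id$ and Jacobi's formula for the determinant. Differentiating $\transformationMatrix(t)=\xt\,(D\Ftb)^{-1}(D\Ftb)^{-\top}$ by the product rule, using $\left.\tfrac{d}{dt}\right|_{t=0}D\Ftb=\grad\Vb$ and the derivative-of-inverse formula $\left.\tfrac{d}{dt}\right|_{t=0}(D\Ftb)^{-1}=-\grad\Vb$, yields
\begin{align*}
\left.\frac{d}{dt}\right|_{t=0}\transformationMatrix(t) = (\di\Vb)\,\Id - \bigl(\grad\Vb + \grad\Vb^\top\bigr).
\end{align*}
Inserting this into the first integral of \eqref{eq:transformed_LtN_Op} and rearranging immediately reproduces the first line of \eqref{eq:shape_der_LtN_op}; the interchange of differentiation and integration is legitimate because $\grad\weakSol^0,\grad\advar^0\in L^2(\nlDom_l)$ while $\grad\Vb$ is bounded, so the differentiated integrand lies in $L^1(\nlDom_l)$.

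For the nonlocal term only the product $\kernelt(\xb,\yb)\,\xt(\xb)\,\xt(\yb)$ depends on $t$. Setting $\tilde{\Vb}(\xb,\yb)\defas(\Vb(\xb),\Vb(\yb))$ and invoking Corollary \ref{cor:frechet_diff} (whose hypotheses are met precisely because $\kernel\in W^{1,\infty}$ is assumed), I obtain $\left.\tfrac{d}{dt}\right|_{t=0}\kernelt=\grad_{\xb}\kernel^\top\Vb(\xb)+\grad_{\yb}\kernel^\top\Vb(\yb)$. Combining this with $\left.\tfrac{d}{dt}\right|_{t=0}\xt(\xb)=\di\Vb(\xb)$ and its analogue in $\yb$ via the product rule, and using $\kernelt|_{t=0}=\kernel$ and $\xi^0=1$, gives
\begin{align*}
\left.\frac{d}{dt}\right|_{t=0}\bigl(\kernelt(\xb,\yb)\,\xt(\xb)\,\xt(\yb)\bigr) = \grad_{\xb}\kernel^\top\Vb(\xb)+\grad_{\yb}\kernel^\top\Vb(\yb) + \kernel\bigl(\di\Vb(\xb)+\di\Vb(\yb)\bigr).
\end{align*}
Multiplying by the fixed factor $(\advar^0(\xb)-\advar^0(\yb))(\weakSol^0(\xb)-\weakSol^0(\yb))$ and integrating then produces the last two lines of \eqref{eq:shape_der_LtN_op}.

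The main obstacle is the rigorous justification of passing the derivative through the double integral of the nonlocal term. Here I would argue that the $t$-derivative of the integrand exists in $L^1(\nlDom_{nl}\times(\nlDom_{nl}\cup\nlBound_{nl}))$ by applying Corollary \ref{cor:frechet_diff} together with the Fr\'echet product rule, noting that $(\advar^0(\xb)-\advar^0(\yb))(\weakSol^0(\xb)-\weakSol^0(\yb))$ belongs to $L^1$ of the product domain by Cauchy--Schwarz (since $\weakSol^0,\advar^0\in L_c^2(\nlDom_{nl}\cup\nlBound_{nl})$), while $\grad\kernel$, $\Vb$, and $\di\Vb$ are all essentially bounded. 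It is exactly the assumption $\kernel\in W^{1,\infty}((\completeDom)\times(\completeDom))$ that supplies both the differentiability of $\kernelt$ and the boundedness needed to dominate the difference quotients, so that the product rule for Fr\'echet derivatives applies and the interchange is valid.
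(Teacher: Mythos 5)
Your proposal is correct and takes essentially the same route as the paper's proof: both differentiate the pulled-back form \eqref{eq:transformed_LtN_Op} over the fixed reference domains, apply Corollary \ref{cor:frechet_diff} with $\tilde{\Vb}(\xb,\yb)=(\Vb(\xb),\Vb(\yb))$ together with the Fr\'{e}chet product rule in $L^1$ to the nonlocal term, and use $\left.\tfrac{d}{dt}\right|_{t=0^+}\xt=\di\Vb$ and $\left.\tfrac{d}{dt}\right|_{t=0^+}\transformationMatrix(t)=\di\Vb\,\Id-\grad\Vb-\grad\Vb^\top$ for the local term. The only difference is cosmetic: you derive $\transformationMatrix'(0)$ by hand via the derivative-of-inverse formula (and spell out the domination of the difference quotients a bit more explicitly), whereas the paper quotes the general formula for $\transformationMatrix'(t)$ from Sturm's thesis and evaluates it at $t=0$.
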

	\begin{proof} 
		We compute the shape derivative of the LtN-operator \eqref{eq:transformed_LtN_Op} in two steps. Therefore, we start by differentiating the nonlocal part of $\LtNOp$, i.e. the double integral, and then we develop the shape derivative of the remaining integral.\\ In, e.g. \cite{schmidt_diss} it is shown, that $\xt$ is continuously differentiable with $\left. \frac{d}{dt} \right|_{t=0^+} \xt(\xb) = \di \Vb(\xb)$.
		Corollary \ref{cor:frechet_diff} combined with the product rule of Fr\'{e}chet derivatives yields the Fr\'{e}chet derivative 
		\begin{align*}
			&\left. \frac{d}{dt} \right|_{t=0^+} \left( \left(\advar^0(\xb) - \advar^0(\yb)\right) \left(\weakSol^0(\xb) - \weakSol^0(\yb) \right) \kernelt(\xb,\yb)\xt(\xb)\xt(\xb) \right) \\
			&= \left(\advar^0(\xb) - \advar^0(\yb)\right) \left(\weakSol^0(\xb) - \weakSol^0(\yb) \right)\left(\grad_{\xb} \kernel(\xb, \yb)^\top \Vb(\xb) + \grad_{\yb} \kernel(\xb, \yb)^\top \Vb(\yb)\right) \\
			&+ \left(\advar^0(\xb) - \advar^0(\yb)\right) \left(\weakSol^0(\xb) - \weakSol^0(\yb) \right) \kernel(\xb,\yb) \left(\di \Vb(\xb) + \di \Vb(\yb)\right) \text{ in } L^1(\nlDom_{nl} \times \left(\nlDom_{nl} \cup \nlBound_{nl} \right), \R).   
		\end{align*}
		Moreover, according to, e.g. \cite[Lemma 2.14]{Sturm_diss} 
		\begin{align*}
			\left. \frac{d}{dt} \right|_{t=s} \transformationMatrix(t)(\xb) =& \tr \left( D\Vb(\xb) \left(D\Ftb(\xb)\right)^{-1} \right)\transformationMatrix(t)(\xb) \\
			&- \left(D\Ftb(\xb)\right)^{-1}D\Vb(\xb)\transformationMatrix(t)(\xb) - \left( \left(D\Ftb(\xb)\right)^{-1}D\Vb(\xb)\transformationMatrix(t)(\xb)\right)^\top,
		\end{align*}
		and therefore $\left. \frac{d}{dt} \right|_{t=0^+} \transformationMatrix(t) = \di \Vb - D\Vb - D \Vb^\top$. Finally, we derive by again utilizing the product rule of Fr\'{e}chet derivatives
		\begin{align*}
			&\left.\frac{d}{dt}\right|_{t=0^+} \left( \transformationMatrix(t)(\xb) \grad \weakSol^0(\xb), \grad \advar^0(\xb) \right) \\
			&= - \left( \left( \grad \Vb(\xb) + \grad \Vb(\xb)^\top \right) \grad \weakSol^0(\xb), \grad \advar^0(\xb) \right) + \left( \grad \weakSol^0(\xb), \grad \advar^0(\xb) \right) \di \Vb(\xb) \text{ in } L^1(\nlDom_l, \R).
		\end{align*}
	\end{proof}
	Combining Theorems \ref{theo:shape_der_1} and \ref{theo:shape_der_2} results in a formula for the shape derivative of the reduced objective functional $D_{\shape}\objFunc^{red}(\shape)[\Vb]$.
	\begin{corollary}
		Let the reduced objective functional $\objFunc$ be defined as in \eqref{eq_reduced_functional} and $\kernel \in W^{1,\infty}(\left(\completeDom\right) \times \left(\completeDom\right))$ is supposed to satisfy (K1)-(K4).\\
		Then, for $\Vb \in \vecfieldsspecific$ we can express the shape derivative $D_{\shape}\objFunc^{red}(\shape)[\Vb]$ as
		\begin{align*}
			&D_{\shape}\objFunc^{red}(\shape)[\Vb] = D_{\shape} \objFunc(\weakSol^0, \shape)[\Vb] + D_{\shape} \interfaceOp(\weakSol^0, \advar^0)[\Vb] - D_{\shape} \interfaceForce(\advar^0)[\Vb] \\
			&= \int_{\nlDom} - \left( \weakSol^0 - \data \right) \grad \data^{\top} \Vb + \left(\weakSol^0 - \data \right)^2 \di \Vb ~d\xb + \nu \int_{\shape} \di \Vb - \nb^{\top}\grad \Vb^\top \nb ~ds \\
			&+ \int\limits_{\nlDom_l} - \left( \left( \grad \Vb + \grad \Vb^\top \right) \grad \weakSol^0(\xb), \grad \advar^0(\xb) \right) + \left( \grad \weakSol^0(\xb), \grad \advar^0(\xb) \right) \di \Vb(\xb) ~d\xb \\
			&+ \int\limits_{\nlDom_{nl}} \int\limits_{\nlDom_{nl} \cup \nlBound_{nl}} \left(\advar^0(\xb) - \advar^0(\yb)\right) \left(\weakSol^0(\xb) - \weakSol^0(\yb)\right)\left(\grad_{\xb} \kernel(\xb,\yb)^\top \Vb(\xb) + \grad_{\yb} \kernel(\xb,\yb)^\top \Vb(\yb) \right) ~d\yb d\xb \\
			&+ \int\limits_{\nlDom_{nl}} \int\limits_{\nlDom_{nl} \cup \nlBound_{nl}} \left(\advar^0(\xb) - \advar^0(\yb)\right) \left(\weakSol^0(\xb) - \weakSol^0(\yb)\right)\left(\di \Vb(\xb) + \di \Vb(\yb) \right) ~d\yb d\xb \\
			&- \int_{\nlDom} \left(\grad f_{\shape}^{\top} \Vb \right) \advar^0 + f_{\shape} \advar^0 \di \Vb ~d\xb.
		\end{align*}
	\end{corollary}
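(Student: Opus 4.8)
The plan is to reduce the claim to a bookkeeping combination of the two preceding shape derivative computations, using the fact that the transported Lagrangian splits additively. By Theorem~\ref{theo:AAM}, together with Assumptions (H0) and (H1) (which hold under the present hypotheses by Theorem~\ref{lemma:AAM_reduced_functional}), the shape derivative of the reduced functional equals the explicit partial derivative, $D_\shape \redFunc(\shape)[\Vb] = \partial_t \AAMLagFunc(0,\weakSol^0,\advar^0)$. First I would recall from \eqref{AAM_function} that this transported Lagrangian splits as $\AAMLagFunc(t,u,\advar) = \objFunc(t,u) + \LtNOp(t,u,\advar) - \varForce(t,\advar)$, so that the entire task is to show that $\partial_t$ at $t=0$ may be applied term by term and that each resulting term coincides with a shape derivative already established.

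The central step is the additivity of the $t$-derivative. To justify it I would verify that each of the three transported maps $t \mapsto \objFunc(t,\weakSol^0)$, $t \mapsto \LtNOp(t,\weakSol^0,\advar^0)$ and $t \mapsto \varForce(t,\advar^0)$ is individually differentiable at $t=0$; since a finite sum of differentiable maps is differentiable with derivative the sum of the derivatives, this gives $\partial_t \AAMLagFunc(0,\weakSol^0,\advar^0) = \partial_t \objFunc(0,\weakSol^0) + \partial_t \LtNOp(0,\weakSol^0,\advar^0) - \partial_t \varForce(0,\advar^0)$. Each single-term derivative is precisely a shape derivative in the sense of the Eulerian derivative definition, because $\Ftzero = \Id$ makes the transported quantities equal to the untransported ones at $t=0$; thus $\partial_t \objFunc(0,\weakSol^0) = D_\shape \objFunc(\weakSol^0,\shape)[\Vb]$, $\partial_t \LtNOp(0,\weakSol^0,\advar^0) = D_\shape \interfaceOp(\weakSol^0,\advar^0)[\Vb]$ and $\partial_t \varForce(0,\advar^0) = D_\shape \interfaceForce(\advar^0)[\Vb]$. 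This reproduces exactly the decomposition $D_\shape \redFunc(\shape)[\Vb] = D_\shape \objFunc(\weakSol^0,\shape)[\Vb] + D_\shape \interfaceOp(\weakSol^0,\advar^0)[\Vb] - D_\shape \interfaceForce(\advar^0)[\Vb]$ announced just before Theorem~\ref{theo:shape_der_1}.

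With the decomposition secured, I would simply substitute the closed forms already proved: the expressions for $D_\shape \objFunc(\weakSol^0,\shape)[\Vb]$ and $D_\shape \interfaceForce(\advar^0)[\Vb]$ from Theorem~\ref{theo:shape_der_1}, and the expression for $D_\shape \interfaceOp(\weakSol^0,\advar^0)[\Vb]$ from Theorem~\ref{theo:shape_der_2}. Collecting the tracking and perimeter terms over $\nlDom$ and $\shape$, the local gradient term over $\nlDom_l$, the two nonlocal double integrals over $\nlDom_{nl} \times (\nlDom_{nl} \cup \nlBound_{nl})$, and finally the forcing term over $\nlDom$ produces the stated formula verbatim.

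The part requiring the most care is the differentiability justification underlying the term-by-term differentiation, which is exactly where the kernel regularity is consumed. For the local Dirichlet part the differentiability is classical, and for the tracking and forcing terms it follows from Lemma~\ref{lemma_frechet_diff_bounded_domain} applied to $\data, f_\shape \in H^1(\nlDom) \subset W^{1,1}(\nlDom)$. For the nonlocal double integral I would invoke Corollary~\ref{cor:frechet_diff} with $\tilde{\Vb}(\xb,\yb) = (\Vb(\xb),\Vb(\yb))$, which demands $\kernel \in W^{1,1}$ on $(\completeDom)\times(\completeDom)$; the hypothesis $\kernel \in W^{1,\infty}$ together with the truncation and boundedness encoded in (K1)--(K4) guarantees this, so that $t \mapsto \kernelt$ is Fr\'echet differentiable in $L^1$ and the product rule for Fr\'echet derivatives applies to the full integrand. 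Once this single differentiability point is in place, the remainder is the routine assembly carried out above.
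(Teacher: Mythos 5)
Your proposal is correct and follows exactly the route the paper takes: the paper presents this corollary as an immediate consequence of combining Theorems \ref{theo:shape_der_1} and \ref{theo:shape_der_2} via the additive decomposition $D_\shape \redFunc(\shape)[\Vb] = D_\shape \objFunc(\weakSol^0,\shape)[\Vb] + D_\shape \interfaceOp(\weakSol^0,\advar^0)[\Vb] - D_\shape \interfaceForce(\advar^0)[\Vb]$, which is licensed by Theorem \ref{theo:AAM} together with Theorem \ref{lemma:AAM_reduced_functional}. Your additional care in justifying the term-by-term differentiation via Lemma \ref{lemma_frechet_diff_bounded_domain} and Corollary \ref{cor:frechet_diff} is sound and merely makes explicit what the paper leaves implicit.
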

	\subsection{Optimization Algorithm}
	\label{section:shape_opt_algorithm}
	In this section we outline the optimization procedure to solve the interface identification problem constrained by Local-to-Nonlocal coupling as described in Section \ref{section:interface_problem_formulation}. 
	As shown in \cite{linear_view}, shape optimization techniques can be identified with corresponding methods in the Hilbert space $\left(\Hc, g\right)$, where $\Hc \subset \{T|T:\nlDom \rightarrow \Rd\}$ and $g$ is an inner product on $\Hc$. Therefore, shape optimization problems can be investigated as optimization problems in $\left(\Hc, g\right)$, see \cite{linear_view}.\\
	In the optimization algorithm, that we will deploy, we would like to derive a Riesz-representation of the shape gradient $\grad \redFunc$ for a scalar product $\shapeInnerProd_{\shape}$ that can depend on the current shape $\shape$. Then, given the state $\weakSol^0$ and adjoint solution $\advar^0$, we compute $\grad \redFunc$ by solving
	\begin{align}
		\label{shape_gradient}
		\shapeInnerProd_{\shape}(\grad \redFunc, \Vb) = D_\shape \redFunc(\shape)[\Vb] \quad \text{for all } \Vb \in \vecfieldsspecific.
	\end{align} 
	In our case, we choose the linear elasticity operator as an inner product, i.e.  
	\begin{align*}
		&\shapeInnerProd_{\shape}: \vecfieldsspecific \times \vecfieldsspecific \rightarrow \R,\quad g(\Ub, \Vb) \defas \int_{\nlDom} \sigma(\Ub):\epsilon(\Vb) ~d\xb, \text{ where}\\
		&\sigma(\Ub) \defas \lambda tr(\epsilon(\Ub))\Id +2\mu\epsilon(\Ub) \text{ and } \epsilon(\Ub) \defas \frac{1}{2} \left( \grad \Ub + \grad \Ub^\top \right).
	\end{align*} 
	Here, $\sigma(\Ub)$ is called the strain and $\epsilon(\Ub)$ stress tensor of $\Ub$ with Lame parameters $\lambda$ and $\mu$. According to \cite{schulz2016computational} a locally varying choice for $\mu$ can have a stabilizing effect on the mesh. So, we follow \cite{schulz2016computational} and set $\lambda=0$ and $\mu(\shape) \defas \mu$ as the solution to
	\begin{alignat}{2}
		\Delta \mu &= 0 \quad && \text{on } \nlDom, \nonumber \\
		\mu &= \mu_{min} \quad && \text{on } \partial \nlDom,\label{mu_heuristic} \\
		\mu &= \mu_{max} \quad && \text{on } \shape.\nonumber
	\end{alignat}
	As a consequence $\mu(\shape)$ is dependent on the current shape $\shape$ and therefore the inner product $\shapeInnerProd_{\shape}$ is also depending on the interface $\shape$. Moreover, the choice of $\mu_{min}$, $\mu_{max} \in \R$ has a strong effect on the step size and the speed of convergence of the optimization algorithm.\\
	Further, we apply the finite element method to solve the described interface identification problem. Accordingly, the domain $\nlDom$ is represented by a mesh and the boundary $\shape$ is characterized by edges. The triangle vertices and, as a consequence, also the edges are then deformed by a vector $\Ub_k$, which resembles the finite element approximation of the descent direction in the linear space of deformations. Here, $\Ub_k$ is a linear combination of piecewise linear and continuous basis functions, i.e. we apply continuous Galerkin. On the other hand, the state and adjoint solution are chosen to be constructed by a linear combination of piecewise linear and continuous basis functions on $\nlDom_{nl}$ and on $\nlDom_l$. As a result, the state and adjoint can be discontinuous on the interface $\shape$.\\
	Furthermore, for vector fields that do not deform the interface $\shape$, i.e. $\supp(\Vb) \cap \shape = \emptyset$, it should hold that $D_{\shape} \redFunc(\shape)[\Vb] = 0$. However, in the finite element setting it can happen that for such test vector fields $\Vb$ the corresponding shape derivative $D_{\shape}\redFunc[\Vb]$ is not zero due to discretization errors. Following \cite{schulz2016efficient}, we therefore set in every iteration
	\begin{align*}
		D_{\shape}\redFunc(\shape)[\Vb] = 0 \text{ for } \Vb \in \vecfieldsspecific \text{ with } \supp(\Vb) \cap \shape = \emptyset.
	\end{align*}
	The complete method is shown in Algorithm \ref{alg:shape_opt}.\\
	\begin{algorithm}
		\label{alg:shape_opt}
		Given: $\data$, $\kernel$, $f_{\shape}$, $\texttt{maxiter} \in \N$, $\texttt{tol} \in (0,\infty)$.\\
		\While{$k \leq \texttt{maxiter}$ and $||D_{\shape} \redFunc(\shape_k)|| > \texttt{tol}$}{
			Interpolate data $\data$ onto the current finite element mesh $\nlDom_k$\\
			$\weakSol(\shape_k) \leftarrow$ solve state equation w.r.t. $\shape_k$.\\
			$\advar(\shape_k) \leftarrow$ Solve adjoint equation w.r.t. $\shape_k$.\\
			$D_\shape \redFunc(\shape_k)[\Vb] \leftarrow$ Assemble the shape derivative for all basis functions $\Vb$.\\
			Set $D_\shape \redFunc(\shape_k)[\Vb]=0$ if $\supp (\Vb) \cap \shape_k = \emptyset$.\\
			$\mu \leftarrow$ Derive the locally varying Lam\'{e} parameter.\\
			$\grad \redFunc(\shape_k) \leftarrow$ Solve \eqref{shape_gradient} to get Riesz-representation of the shape gradient.\\
			\eIf{curvature condition fulfilled}{
				$\Ub_k$ = L-BFGS-Update
			}{
				$\Ub_k = - \grad \redFunc(\shape_k)$
			}
			\While{$\redFunc((\Id + \alpha_k\Ub_k)(\shape_k)) > \redFunc(\shape_k) + c D_{\shape} \redFunc(\shape_k)[\Ub_k]$\label{shape_algo:backtracking_start}} 
			{$\alpha = \tau \alpha$\label{shape_algo:backtracking_end}}
			$\alpha_k \leftarrow \alpha$\\
			$\nlDom_{k+1} = \left(\Id + \alpha_k \Ub_k\right)(\nlDom_k)$\\
			$k=k+1$
		}
		\caption{Shape optimization algorithm}
	\end{algorithm}
	Additionally, after we have found a shape gradient $\grad \redFunc(\shape_k)$ we make use of an limited memory BFGS-update, if the corresponding curvature condition is satisfied. Otherwise we take the negative gradient as the descent direction. For more information on how to apply L-BFGS, we refer to \cite{num_opt}.
	After we have found a descent direction $\Ub_k$ we compute the step length by applying a backtracking line search in  Lines \ref{shape_algo:backtracking_start}-\ref{shape_algo:backtracking_end}, where we stop after the sufficient decrease condition is satisfied. The parameter $c$ is chosen to be $c=10^{-4}$ as it is suggested in \cite{num_opt}.
	
	\begin{remark}
		Since we use an iterative solver to compute solutions to the state and adjoint equation, the application of (multi-step) one-shot methods as in \cite{one-shot,multistep}, where state, adjoint and the shape are simultaneously calculated, seems to be a natural way to solve the interface identification problem in a faster time. However, the assembly of the stiffness matrix needed to compute the state or adjoint solution is quite costly and compared to that the time saved by stopping the algorithm for calculating $\weakSol(\shape_k)$ and $\advar(\shape_k)$ after a few iterations is rather small. Consequently, if we need more outer iteration steps, (multi-step) one-shot methods do not yield a faster way to solve the interface identification problem constrained by a LtN coupling. On the other hand, we can only reduce the time by a few seconds, if we slightly increase the tolerance for solving the state and adjoint equation such that we still need the same number of outer iterations. Therefore, we keep the standard approach introduced in this Section and do not use one-shot methods.
	\end{remark}
	
	\section{Numerical Experiments}
	\label{section:num_ex}
	In this section we demonstrate the shape optimization algorithm that was introduced in the previous Section \ref{section:shape_opt_algorithm} on two examples. Snapshots of the algorithm are depicted in Figure \ref{fig:ex_1} for the first and in Figure \ref{fig:ex_2} for the second example.
	\begin{figure}[h!] 
		\begin{center}
			\begin{small}
				\begin{tabular}{cccc}
					\includegraphics[width = 0.2\textwidth]{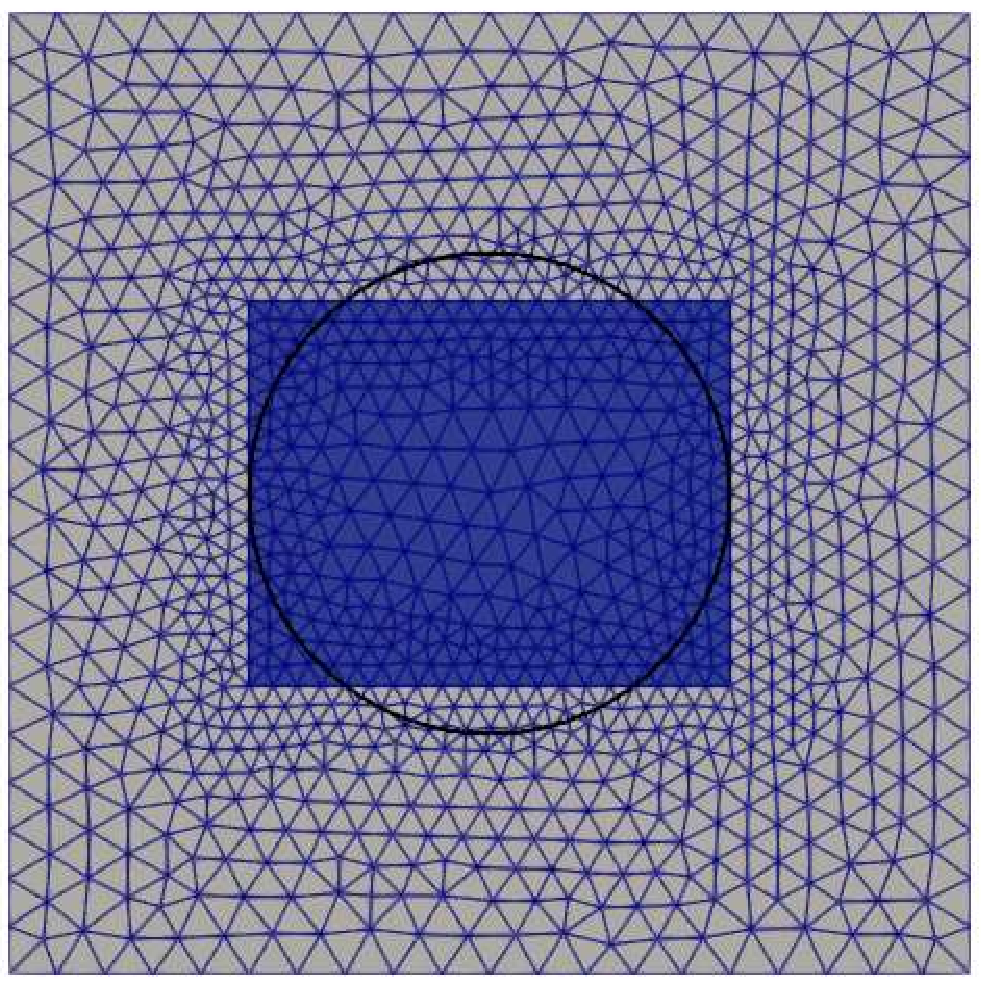}
					&\includegraphics[width = 0.2\textwidth]{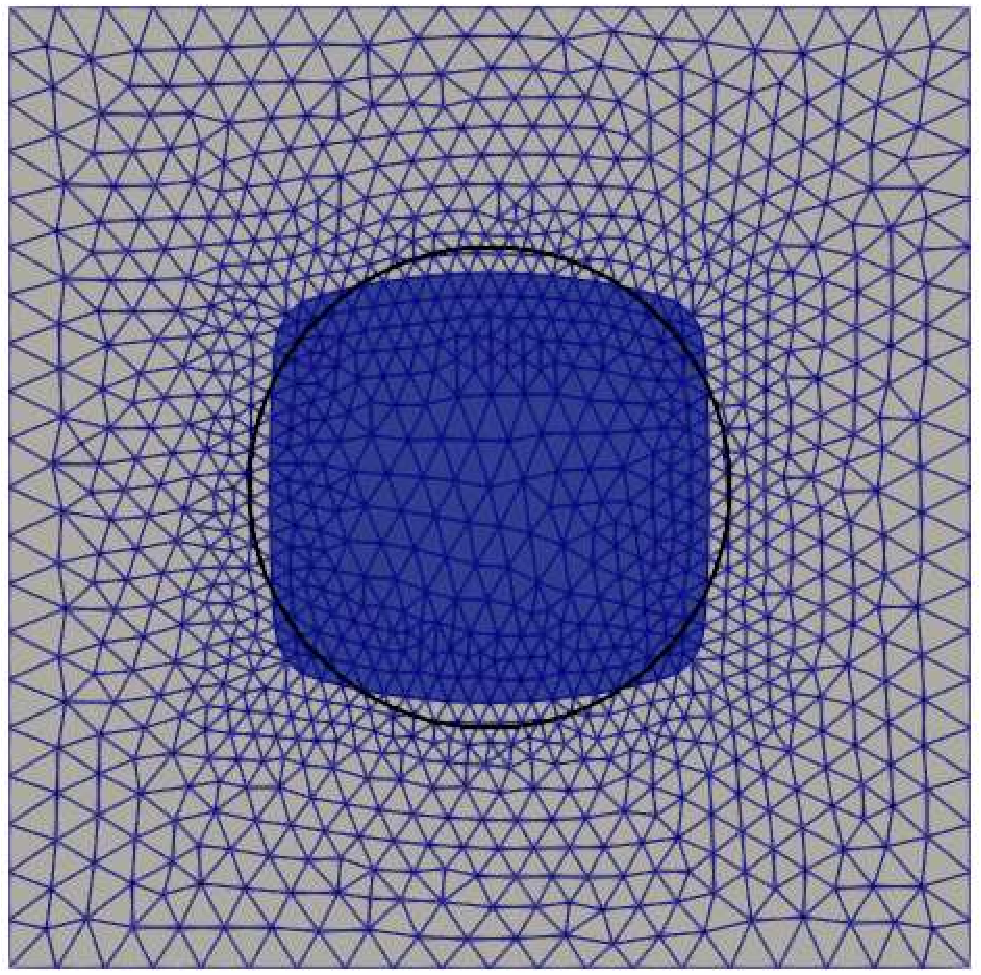}
					&\includegraphics[width = 0.2\textwidth]{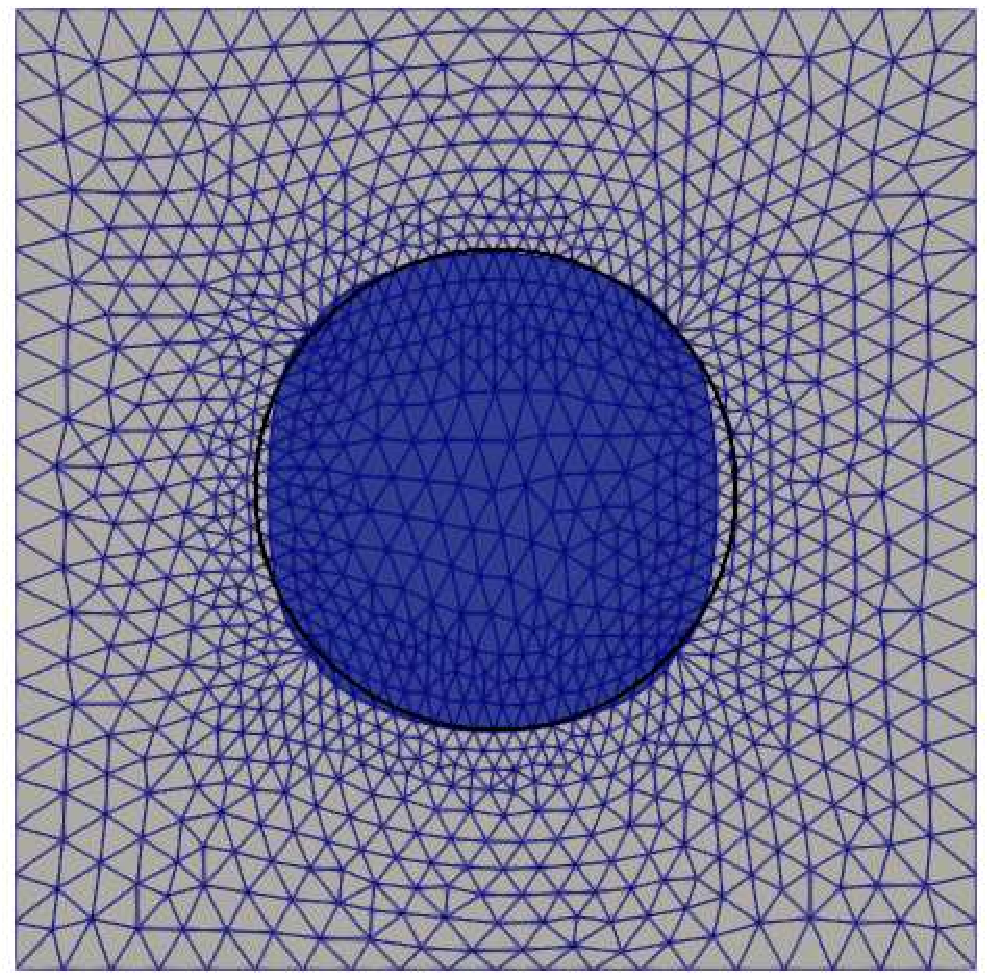}
					&\includegraphics[width = 0.2\textwidth]{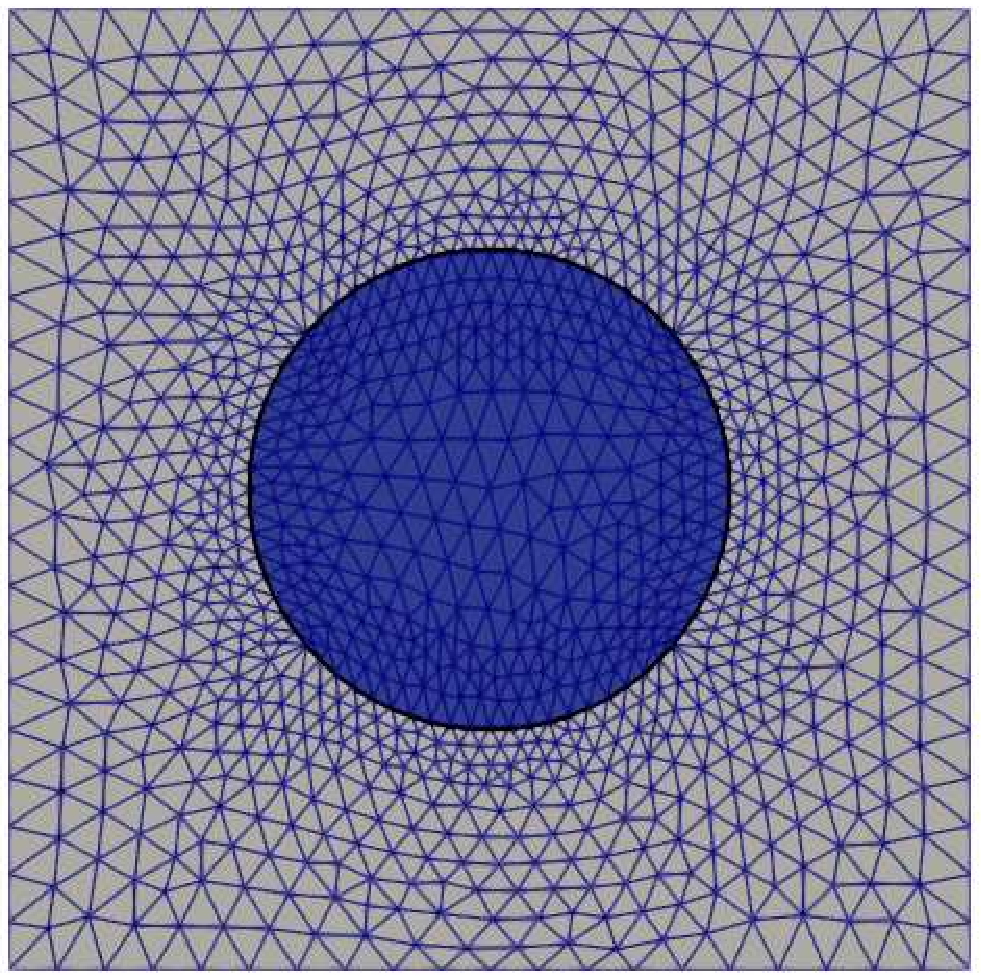}\\
					Start setup & Iteration 3 & Iteration 5 & Iteration 14
				\end{tabular}
			\end{small}
		\end{center}
		\caption{Example 1}
		\label{fig:ex_1}
	\end{figure}
	\begin{figure}[h!] 
		\begin{center}
			\begin{small}
				\begin{tabular}{cccc}
					\includegraphics[width = 0.2\textwidth]{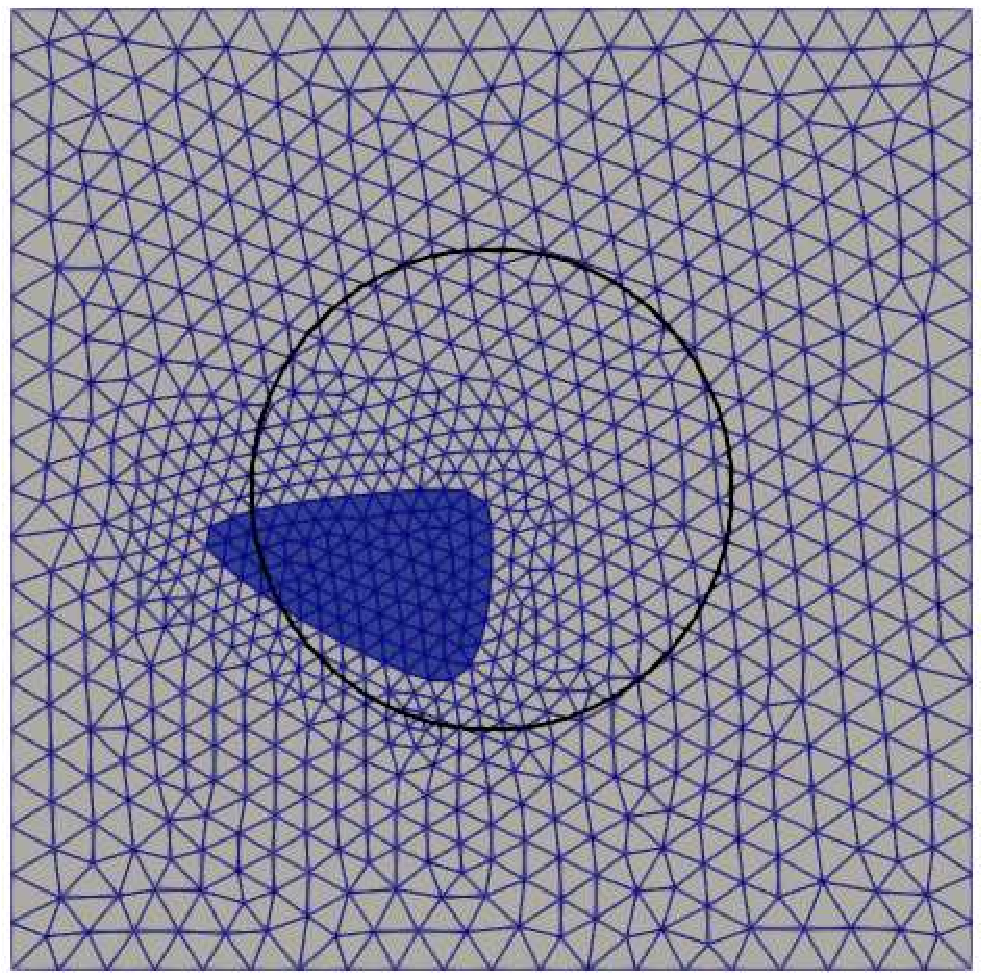}
					&\includegraphics[width = 0.2\textwidth]{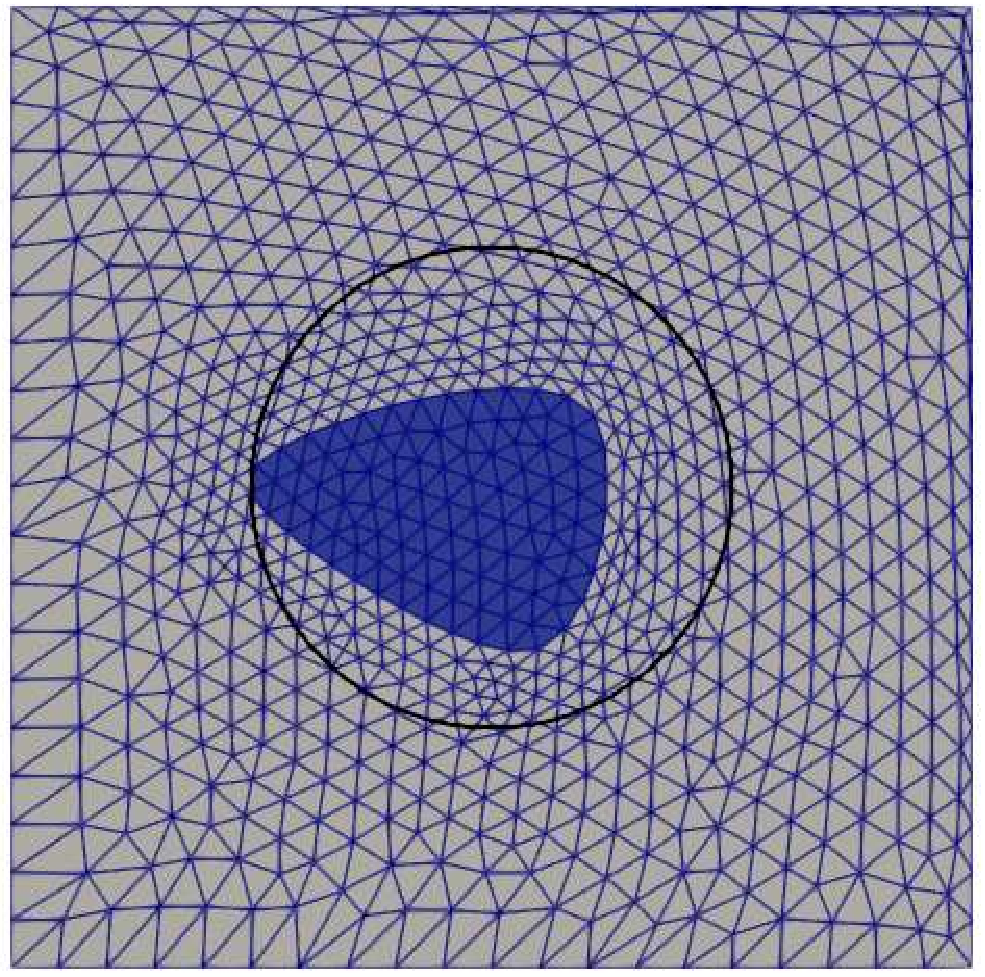}
					&\includegraphics[width = 0.2\textwidth]{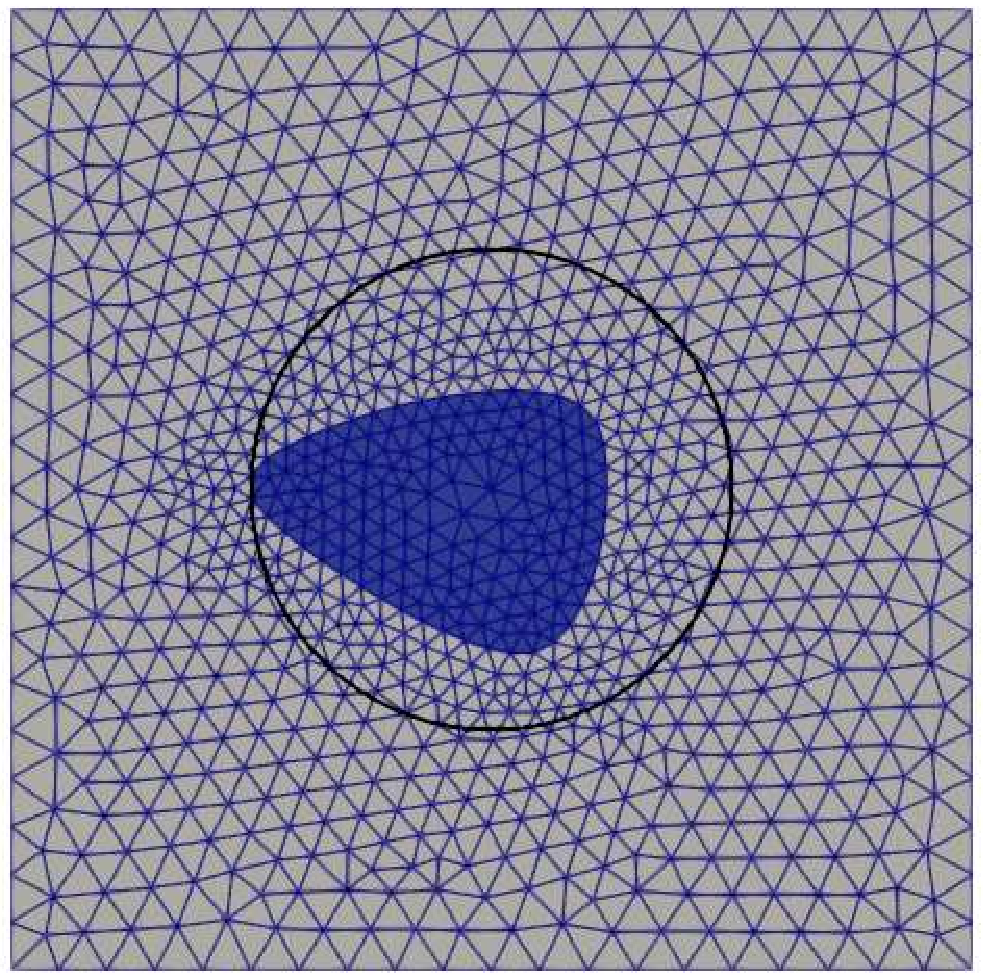}
					&\includegraphics[width = 0.2\textwidth]{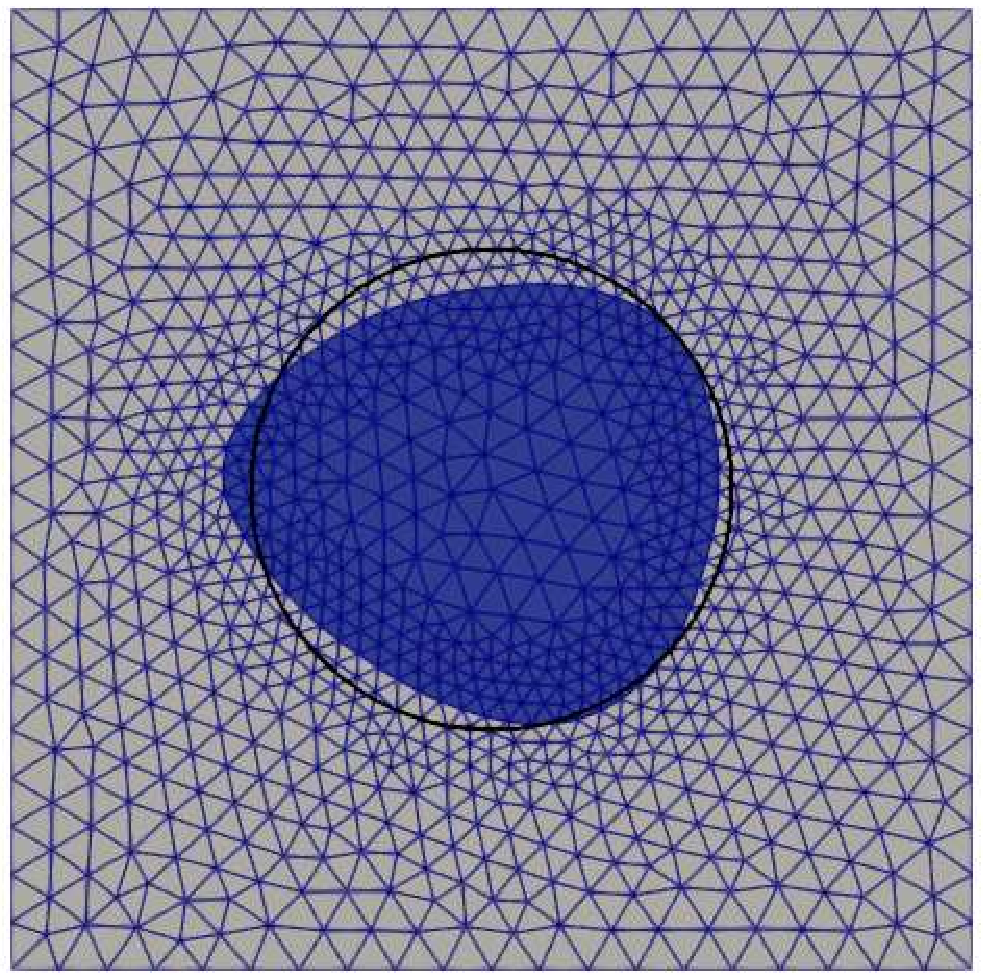}\\
					Start setup & Iteration 5 & Iteration 5 remeshed & Iteration 10 remeshed\\
					&\includegraphics[width = 0.2\textwidth]{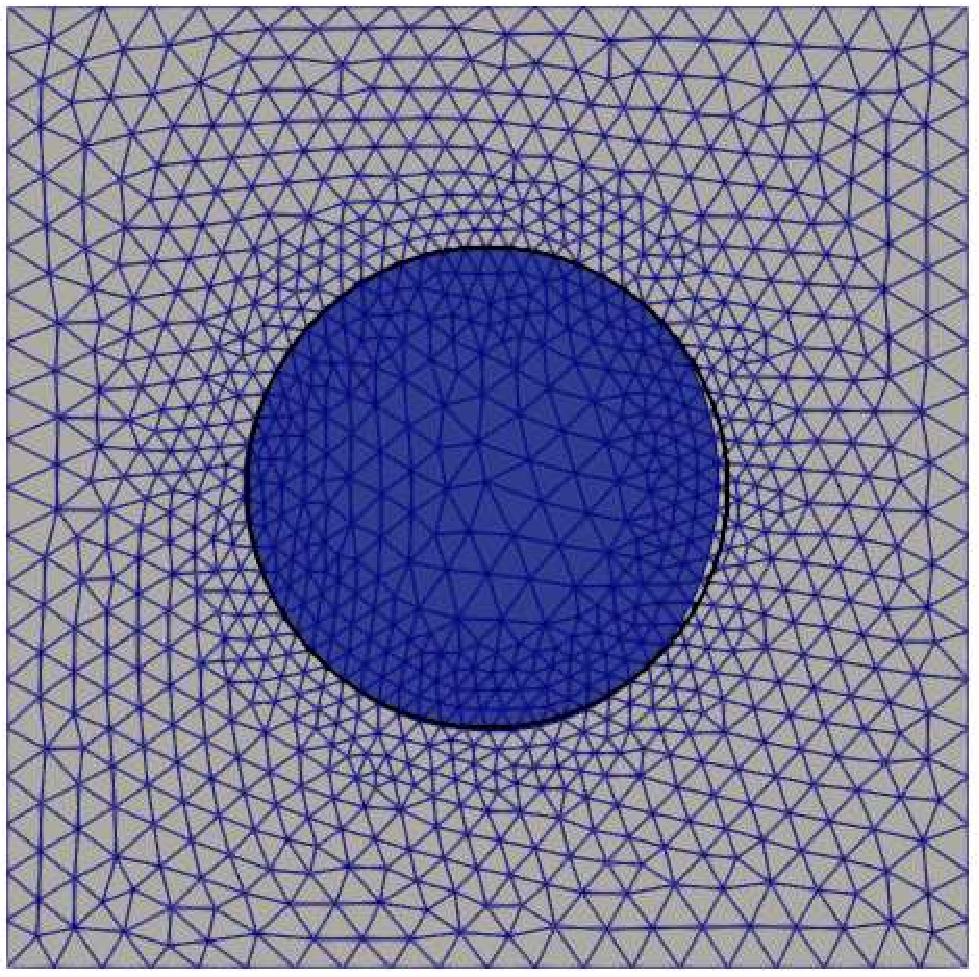}
					&\includegraphics[width = 0.2\textwidth]{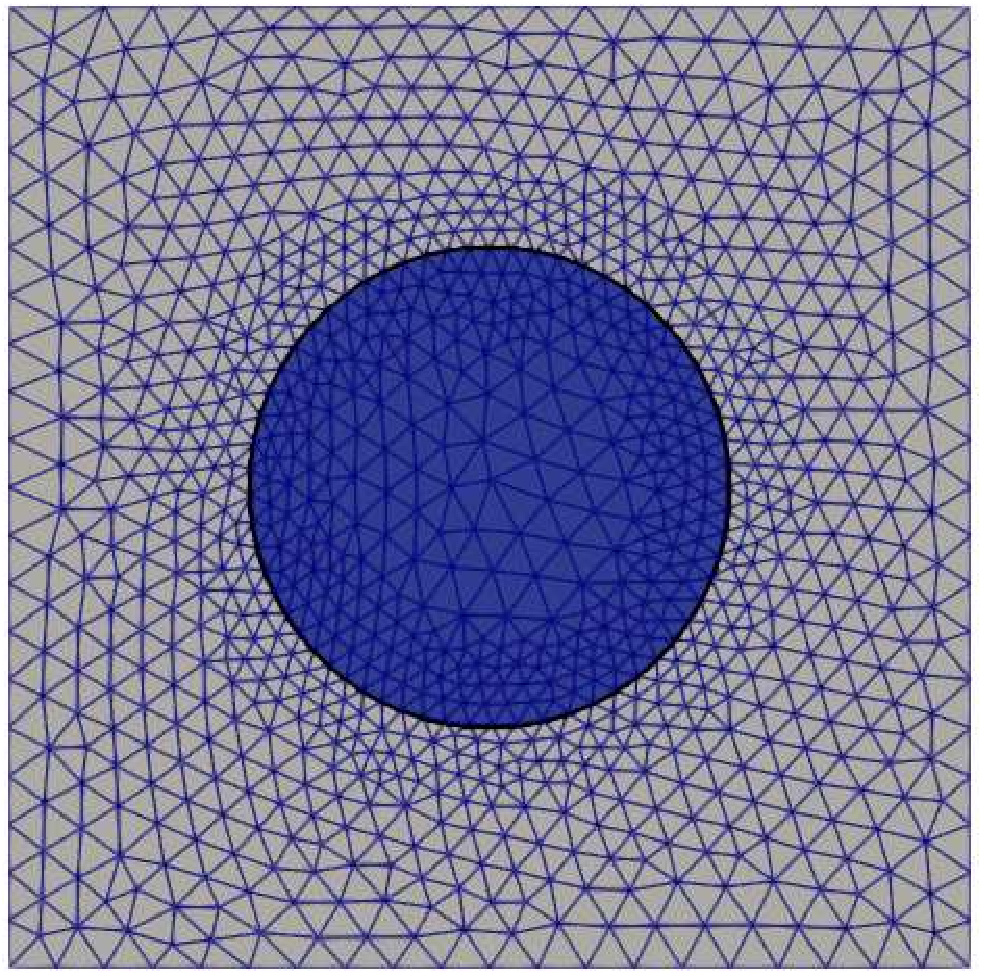}
					&\\
					& Iteration 15 & Iteration 20 & 
				\end{tabular}
			\end{small}
		\end{center}
		\caption{Example 2}
		\label{fig:ex_2}
	\end{figure}
	In both cases, we choose a small perimeter regularization with parameter $\nu = 0.001$ and the forcing term
	\begin{align*}
		f_{\shape}(\xb) = 10 \ind_{\nlDom_{nl}}(\xb) - 10 \ind_{\nlDom_l}(\xb).
	\end{align*}
	In the first experiment, we use the kernel $\kernel_1$ and in the second example the kernel $\kernel_2$, which are defined as follows 
	\begin{align*}
		\kernel_1(\xb,\yb) = \frac{4}{\pi \delta^4}\ind_{B_{\delta}(\xb)}(\yb) \quad \text{and} \quad \kernel_2(\xb,\yb) = \frac{4}{\pi \delta^4}\left( 1 - 0.5 \frac{||\xb - \yb||^2}{\delta^2} \right)\ind_{B_{\delta}(\xb)}(\yb). 
	\end{align*}
	Since in the first case the kernel $\kernel_1$  is piecewise constant, the shape derivative of the corresponding LtN-operator \eqref{eq:shape_der_LtN_op} reduces to
	\begin{align*}
		D_{\shape} &\interfaceOp(\weakSol^0,\advar^0)[\Vb] = \int\limits_{\nlDom_l} - \left( \left( \grad \Vb + \grad \Vb^\top \right) \grad \weakSol^0(\xb), \grad \advar^0(\xb) \right) + \left( \grad \weakSol^0(\xb), \grad \advar^0(\xb) \right) \di \Vb(\xb) ~d\xb \\
		&+ \int\limits_{\nlDom_{nl}} \int\limits_{\nlDom_{nl} \cup \nlBound_{nl}} \left(\advar^0(\xb) - \advar^0(\yb)\right) \left(\weakSol^0(\xb) - \weakSol^0(\yb)\right)\left(\di \Vb(\xb) + \di \Vb(\yb) \right) ~d\yb d\xb.
	\end{align*}
	Now, we solve the interface identification problem described in Section \ref{section:interface_problem_formulation} by applying the finite element method, where we choose to employ piecewise linear and continuous basis functions. 
	Here, the underlying mesh is generated by Gmsh \cite{gmsh}.
	The nonlocal components for the stiffness matrix regarding $\interfaceOp$ that is used in the Schwarz method, see Section \ref{section:schwarz}, as well as the shape derivative $D_{\shape} \interfaceOp$ are computed by a modified version of the Python package nlfem \cite{nlfem}. The local components and the shape derivatives of the objective functional $D_{\shape} \objFunc$ and of the forcing term $D_{\shape}  \interfaceForce$ are assembled by using FEniCS \cite{FEniCS1, FEniCS2}.
	Then, the data $\data$ is computed as the solution of the Local-to-Nonlocal coupling regarding a circle as the interface $\shape$, i.e. the boundary of the nonlocal domain. This circle can be observed in black on every picture. After that, we start with a different interface $\shape_0$ and try to approximate $\data$ by following Algorithm \ref{alg:shape_opt}, where we stop, if $||D_{\shape}\redFunc(\shape_k)|| < 5 \cdot 10^{-5}$ or if $k > 25$. As a result, the nonlocal domain $\nlDom_{nl}$, which is colored in blue, should roughly end up as the disc with the black circle as the boundary.\\
	In the first example the algorithm terminates after 14 iterations. Here, the edges of the initial shape $\shape_0$ disappear due to the perimeter regularization and the nonlocal domain $\nlDom_{nl}$ has approximately the above described target shape. As mentioned in Section \ref{section:shape_opt_algorithm} the convergence speed and therefore the number of iterations is highly dependent on the choice of Lam\'{e} parameters, which we derive from \eqref{mu_heuristic} with $\mu_{min}=0.0$ and $\mu_{max}=1.0$ for both experiments.\\
	In the second example, our initial shape is small compared to the target shape and needs to move a bit to the upper right and therefore the mesh quality decreases. Also, nodes in $\nlDom$ are not allowed to be pushed outside of $\nlDom$, which is tested in the code by a function. So, in the beginning of the algorithm some nodes are moved towards the boundary and as a result the algorithm stagnates, because they cannot be pushed further. Therefore, we remesh after the fifth and tenth iteration. In the end, the algorithm stops after 20 iterations and we are able to recover the target shape quite well, which can be seen in Picture \ref{fig:ex_2}.\\
	The development of the objective function values and the norm of the shape gradients for both examples are presented in Figure \ref{fig:comparison_objective_funtional}. 
	\begin{figure}[h!]
		\includegraphics[width = 0.5\textwidth]{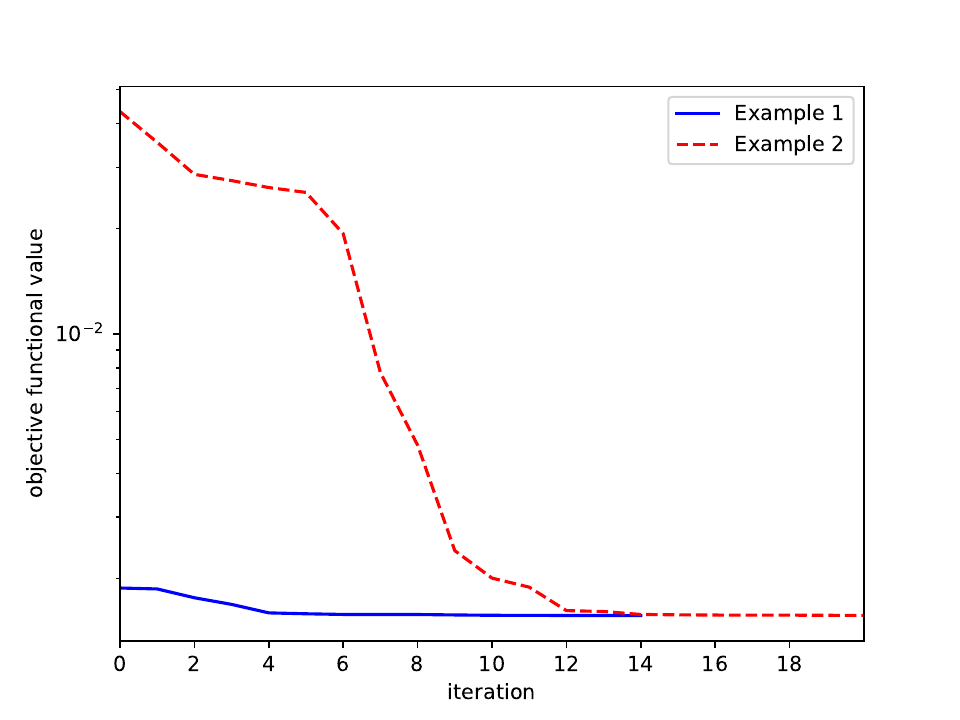}
		\includegraphics[width = 0.5\textwidth]{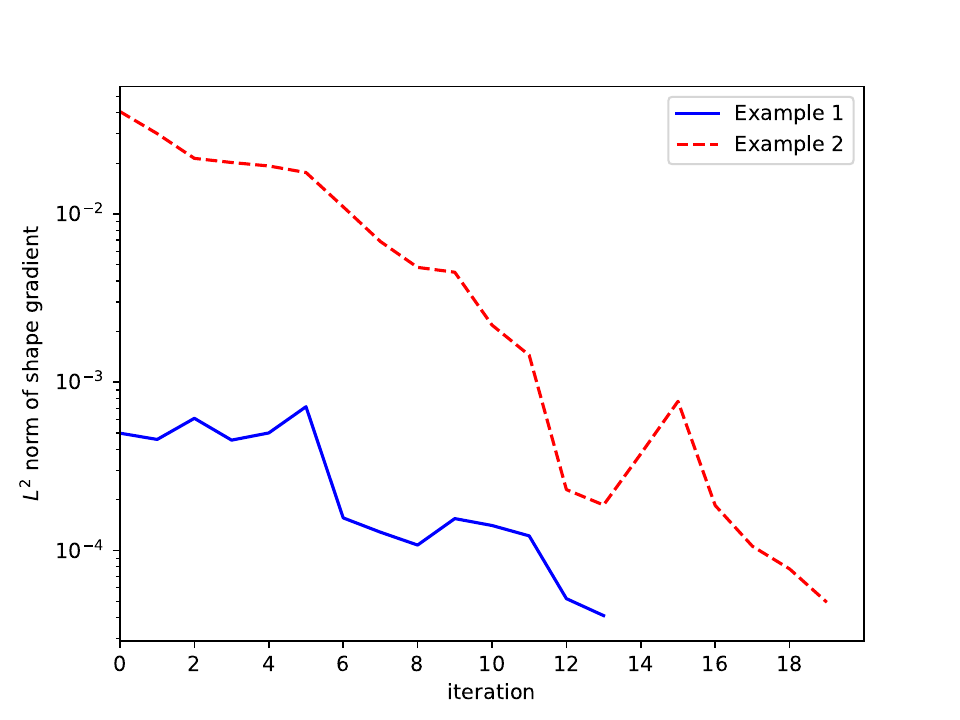}
		\caption{On the left hand side we see the development of the objective functional values during the algorithm for each example. Here, the starting solution of the first example is quite near the optimal solution such that the objective function value decreases only a bit. In the second example the objective function value is reduced quite fast in the first twelve iterations. After that there is also only a small improvement regarding the objective function value. On the right hand side, we can see the history of the $L^2(\nlDom)$ norm of the shape gradients.}
		\label{fig:comparison_objective_funtional}
	\end{figure}
	\section{Conclusion}
	As we have seen in this work, shape optimization techniques can be applied on interface identification problems that are constrained by an energy-based Local-to-Nonlocal coupling in order to find out where the nonlocal domain of this coupling should be located. Here, the shape derivative of the reduced functional is derived by the averaged adjoint method. In the future it could be interesting to enlarge this framework to applications in the field of Peridynamics, where most of these LtN combinations are currently employed.

	\section*{Acknowledgements}
	
	This work has been supported by the German Research Foundation (DFG) within the Research Training Group 2126: 'Algorithmic Optimization'.

	\section*{Disclosure statement}
	
	The authors report there are no competing interests to declare.

	\bibliographystyle{plain}
	\bibliography{literature}
	
	\appendix
	\section{Proof of Lemma \ref{lemma:AAM_reduced_functional}}
	For this proof we rely on the next two Lemmata:
	\begin{lemma}
		\label{lemma:poincare_LtN}
		There exists a positive constant $C \in (0, \infty)$, such that 
		\begin{align*}
			\LtNOp(t, \weakSol, \weakSol) \geq  C ||\weakSol||_{H^1(\nlDom_l) \times L_c^2(\nlDom_{nl} \cup \nlBound_{nl})}^2 \quad \text{for all } \weakSol \in \LtNSpace(\shape) \text{ and } t \in [0, T].
		\end{align*}
	\end{lemma}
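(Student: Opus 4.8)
The plan is to prove coercivity with a constant that is \emph{uniform} in $t$, by reducing the pulled-back form $\LtNOp(t,\weakSol,\weakSol)$ to quantities bounded below independently of $t$ via the explicit structure $\Ftb=\Id+t\Vb$. Setting $\weakSol=\advar$ in \eqref{eq:transformed_LtN_Op}, the form splits into a local quadratic term $\int_{\nlDom_l}\left(\transformationMatrix(t)(\xb)\grad\weakSol(\xb),\grad\weakSol(\xb)\right)d\xb$ and a nonlocal double integral of $\left(\weakSol(\xb)-\weakSol(\yb)\right)^2$ against the weight $\kernelt(\xb,\yb)\xt(\xb)\xt(\yb)$. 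First I would shrink $T$ so that $\|t\Vb\|_{W^{1,\infty}(\nlDom,\R^d)}<1$ for all $t\in[0,T]$; then, since $\Vb\in\vecfieldsspecific$ and $\completeDom$ is compact, $D\Ftb=\Id+tD\Vb$ is uniformly invertible, the Jacobian $\xt=\det D\Ftb$ satisfies $1-\eta\le\xt\le 1+\eta$, the symmetric matrix $\transformationMatrix(t)$ is uniformly positive definite with smallest eigenvalue $\ge\lambda_*>0$, and $\Ftb$ is bi-Lipschitz with a constant $L$ close to $1$ --- all bounds holding uniformly in $\xb,\yb$ and $t\in[0,T]$.

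For the local term, uniform ellipticity gives $\int_{\nlDom_l}\left(\transformationMatrix(t)\grad\weakSol,\grad\weakSol\right)d\xb\ge\lambda_*\int_{\nlDom_l}|\grad\weakSol|^2\,d\xb$, and since $\weakSol=0$ on $\partial\nlDom\cap\partial\nlDom_l$ a Poincar\'{e}--Friedrichs argument (exactly as in the $t=0$ analysis behind Lemma \ref{lemma:AcostaNormEquivalence}) upgrades this to a lower bound of the form $c_l\|\weakSol\|_{H^1(\nlDom_l)}^2$ with $c_l$ independent of $t$.

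The nonlocal term is the main obstacle: the naive idea of comparing $\LtNOp(t,\cdot,\cdot)$ to $\LtNOp(0,\cdot,\cdot)$ pointwise fails, because near the truncation sphere $\|\xb-\yb\|=\delta$ the transformed kernel $\kernelt$ may vanish where $\kernel$ does not. Instead I would rebuild coercivity from the short-range lower bound (K2): choosing $\epsilon':=\epsilon/L$, bi-Lipschitzness ensures $\Ftb(\yb)\in B_{\epsilon}(\Ftb(\xb))$ whenever $\|\xb-\yb\|\le\epsilon'$, so $\kernelt(\xb,\yb)=\kernel(\Ftb(\xb),\Ftb(\yb))\ge\kernel_0$ there. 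Together with $\xt(\xb)\xt(\yb)\ge(1-\eta)^2$, the nonlocal term dominates $(1-\eta)^2\kernel_0\int_{\nlDom_{nl}}\int_{(\nlDom_{nl}\cup\nlBound_{nl})\cap B_{\epsilon'}(\xb)}\left(\weakSol(\xb)-\weakSol(\yb)\right)^2 d\yb\,d\xb$. This last expression is the nonlocal Dirichlet energy for the integrable kernel $\kernel_0\,\ind_{B_{\epsilon'}(\xb)}(\yb)$, which satisfies the integrable-kernel hypotheses of Example \ref{remark:examples_well_posedness}; its coercivity then controls $\|\weakSol\|_{L_c^2(\nlDom_{nl}\cup\nlBound_{nl})}^2$, where the interactions across $\nlBound_{nl}\cap\nlDom_l$ are absorbed using the $H^1$-control of $\weakSol$ on $\nlDom_l$ obtained in the previous step. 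Crucially, $\epsilon'$, $\kernel_0$ and $(1-\eta)^2$ are all independent of $t$.

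Summing the local and nonlocal lower bounds and absorbing constants yields $\LtNOp(t,\weakSol,\weakSol)\ge C\|\weakSol\|_{H^1(\nlDom_l)\times L_c^2(\nlDom_{nl}\cup\nlBound_{nl})}^2$ with $C>0$ independent of $t\in[0,T]$, as claimed. The delicate points to verify carefully are that the bi-Lipschitz distortion leaves a ball of strictly positive radius $\epsilon'$ on which (K2) survives for every $t\in[0,T]$, and that the reduction to a constant-kernel energy is compatible with the mixed boundary interactions, so that the integrable-kernel equivalence of Example \ref{remark:examples_well_posedness} (equivalently Lemma \ref{lemma:AcostaNormEquivalence}) can be invoked with a genuinely $t$-independent constant.
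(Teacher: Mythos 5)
Your proposal is correct and follows essentially the same route as the paper's proof: bound the smallest eigenvalue of $\transformationMatrix(t)$ below uniformly for small $t$, use (K2) together with the Jacobian bounds on $\xt$ to dominate the transformed kernel $\kernelt\,\xt(\xb)\xt(\yb)$ by a constant indicator kernel supported on a fixed small ball, and then invoke the fixed-shape norm equivalence of Lemma \ref{lemma:AcostaNormEquivalence} for that surrogate kernel. You are in fact slightly more explicit than the paper on a point it glosses over --- shrinking the ball radius to $\epsilon'=\epsilon/L$ so that (K2) survives composition with the bi-Lipschitz map $\Ftb$ --- which is exactly the right correction.
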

	\begin{proof}
		Since there exists a positive constant 
		$0 < C_1 < \infty$ with $C_1 |\xb|^2 \leq \xb^\top \transformationMatrix(t)(\xb)$(see \cite[Chapter 10.2.4]{shapes_geometries}), if $t$ is small enough, and since $\kernel(\xb,\yb)$ is bounded from below by $\kernel_0$, if $\yb \in B_{\epsilon}(\xb)$(see Chapter \ref{chap:Dir_Prob}), we set $C_2 \defas \min\{C_1, \kernel_0\}$ and notice that
		\begin{align*}
			\LtNOp(t, \weakSol, \weakSol) =& \int_{\nlDom_l} \left( \transformationMatrix(t)\grad \weakSol , \grad \weakSol \right) ~d\xb + \int_{\nlDom_{nl}} \int_{\nlDom_{nl} \cup \nlDom_l} \left( \weakSol(\xb) - \weakSol(\yb) \right)^2 \kernelt(\xb,\yb)\xt(\xb) \xt(\yb) ~d\yb d\xb \\ &+ \int_{\nlDom_{nl}} \weakSol(\xb)^2 \int_{\nlBound} \kernelt(\xb,\yb) \xt(\xb) ~d\yb d\xb \\
			\geq& C_2 \int_{\nlDom_l} \grad \weakSol^2 ~d\xb + \int_{\nlDom_{nl}} \int_{\nlDom_{nl} \cup \nlDom_l} \left( \weakSol(\xb) - \weakSol(\yb) \right)^2 C_2 \ballxy ~d\yb d\xb \\ 
			&+ \int_{\nlDom_{nl}} \weakSol(\xb)^2 \int_{\nlBound} C_2 \ballxy ~d\yb d\xb \geq C||\weakSol||_{H^1(\nlDom_l) \times L_c^2(\nlDom_{nl} \cup \nlBound_{nl} )}^2,
		\end{align*} 
		where the last step is a result from Lemma \ref{lemma:AcostaNormEquivalence} applied on $\tilde{\kernel}(\xb,\yb) = C_2 \ballxy$ in the corresponding bilinear form, which yields such a constant $C \in (0, \infty)$.
	\end{proof}
	\begin{lemma}[After {\cite[Chapter 10.2.4 Lemma 2.1]{shapes_geometries}}]
		\label{lemma:l2_convergence}
		Let $\hat{\nlDom} \subset \Rd$ be a bounded and open domain with nonzero measure, $\Vb \in C_0^1(\hat{\nlDom},\Rd)$ and $n \in \N$. Then, for $g \in L^2(\hat{\nlDom},\R^n)$, we get
		\begin{align*}
			||g \circ \Ftb - g||_{L^2(\hat{\nlDom},\R^n)} \rightarrow 0 \text{ and } ||g \circ \Ftb^{-1} - g||_{L^2(\hat{\nlDom},\R^n)} \rightarrow 0 \quad \text{for } t \searrow 0.
		\end{align*}
	\end{lemma}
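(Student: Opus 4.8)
The plan is to reduce the claim to a dense subclass of $L^2$ and to exploit that the composition operators $g \mapsto g \circ \Ftb$ are uniformly bounded for small $t$, together with the uniform convergence $\Ftb \rightarrow \Id$. First I would record that, since $\Vb \in C_0^1(\hat{\nlDom},\Rd)$ has compact support, $\Ftb = \Id + t\Vb$ coincides with the identity near $\partial \hat{\nlDom}$; hence for $T$ small enough (so that $||t\Vb||_{W^{1,\infty}} < 1$, as already exploited in the paper) the map $\Ftb:\overline{\hat{\nlDom}} \rightarrow \overline{\hat{\nlDom}}$ is a bijection, and the same holds for $\Ftb^{-1}$. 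This makes both $g \circ \Ftb$ and $g \circ \Ftb^{-1}$ well defined as elements of $L^2(\hat{\nlDom},\R^n)$.

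Next I would establish the uniform bound. By the change of variables $\yb = \Ftb(\xb)$,
\begin{align*}
	\int_{\hat{\nlDom}} |g(\Ftb(\xb))|^2 ~d\xb = \int_{\hat{\nlDom}} |g(\yb)|^2 \left|\det D\Ftb^{-1}(\yb)\right| ~d\yb.
\end{align*}
Since $D\Ftb = \Id + t D\Vb$ with $D\Vb$ bounded on $\hat{\nlDom}$, the Jacobian $\det D\Ftb$ is bounded away from $0$ and from $\infty$ uniformly for $t \in [0,T]$ after possibly shrinking $T$, and consequently so is $\det D\Ftb^{-1}$. Thus there is a constant $C \in (0,\infty)$, independent of $t$, with $||g \circ \Ftb||_{L^2(\hat{\nlDom},\R^n)} \leq \sqrt{C}\, ||g||_{L^2(\hat{\nlDom},\R^n)}$ for all $t \in [0,T]$, and the analogous estimate for $\Ftb^{-1}$ follows by the substitution $\zb = \Ftb^{-1}(\xb)$.

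Then I would prove the convergence on the dense subclass $C(\overline{\hat{\nlDom}})$, which is dense in $L^2$ because $\hat{\nlDom}$ is bounded: for continuous $g$, uniform continuity on the compact set $\overline{\hat{\nlDom}}$ combined with $\sup_{\xb} |\Ftb(\xb) - \xb| = t\,||\Vb||_{L^\infty} \rightarrow 0$ yields $g \circ \Ftb \rightarrow g$ uniformly, hence in $L^2$ on the bounded domain. For general $g \in L^2$ I would run a three-$\varepsilon$ argument: choose $\tilde{g} \in C(\overline{\hat{\nlDom}})$ with $||g - \tilde{g}||_{L^2} < \varepsilon$ and split
\begin{align*}
	||g \circ \Ftb - g||_{L^2} \leq ||(g - \tilde{g}) \circ \Ftb||_{L^2} + ||\tilde{g} \circ \Ftb - \tilde{g}||_{L^2} + ||\tilde{g} - g||_{L^2},
\end{align*}
bounding the first term by $\sqrt{C}\,\varepsilon$ via the uniform bound, the last by $\varepsilon$, and sending the middle term to $0$; as $\varepsilon$ is arbitrary the left-hand side tends to $0$. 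The statement for $\Ftb^{-1}$ follows verbatim once one notes that $\Ftb^{-1} \rightarrow \Id$ uniformly as well, since $|\Ftb^{-1}(\yb) - \yb| = t\,|\Vb(\Ftb^{-1}(\yb))| \leq t\,||\Vb||_{L^\infty}$.

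This argument is essentially the standard continuity of the translation/composition operator on $L^2$, so no single step is genuinely hard; the only points requiring care are the uniform two-sided bound on the Jacobian $\det D\Ftb$ for small $t$ (routine from $D\Ftb = \Id + tD\Vb$) and the observation that the compact support of $\Vb$ keeps $\Ftb$ and $\Ftb^{-1}$ mapping $\hat{\nlDom}$ onto itself, so that every composition stays inside the domain on which $g$ is defined.
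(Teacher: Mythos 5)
Your proposal is correct, but it proceeds by a genuinely different route from the paper. The paper's own proof is a one-line reduction: it extends $g$ and $\Vb$ by zero to functions in $L^2(\Rd,\R^n)$ and $C_0^1(\Rd,\Rd)$, invokes the proof of the cited result in Delfour--Zol\'esio \cite[Chapter 10.2.4, Lemma 2.1]{shapes_geometries} for $n=1$, and handles $n \geq 2$ componentwise. You instead give a self-contained proof by the classical density scheme: a uniform two-sided bound on $\det D\Ftb = \det(\Id + t D\Vb)$ yielding uniform boundedness of the composition operators $g \mapsto g \circ \Ftb$ and $g \mapsto g \circ \Ftb^{-1}$ on $L^2$ for small $t$, uniform (hence, on the bounded domain, $L^2$) convergence on the dense class $C(\overline{\hat{\nlDom}})$, and a three-$\varepsilon$ splitting; this is essentially the mechanism underlying the cited lemma, so in spirit the two arguments coincide, but yours spells it out. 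What each buys: the paper's version is maximally short and delegates the analysis to the literature; yours makes explicit precisely which hypotheses enter --- compact support of $\Vb$ (so that $\Ftb$ and $\Ftb^{-1}$ map $\overline{\hat{\nlDom}}$ onto itself and compositions stay defined, replacing the paper's zero-extension device), smallness of $t$ (injectivity and the Jacobian bounds), and boundedness of $\hat{\nlDom}$ --- and it additionally records the estimate $|\Ftb^{-1}(\yb) - \yb| \leq t\,\|\Vb\|_{L^\infty}$, which the paper leaves implicit. Two cosmetic points you could tighten: surjectivity of $\Ftb$ onto $\hat{\nlDom}$ deserves a word (it follows since $\Ftb = \Id$ off the compact set $\supp \Vb$, whose image stays in $\hat{\nlDom}$ once $t\,\|\Vb\|_{L^\infty} < \dist(\supp \Vb, \partial\hat{\nlDom})$, plus openness of $\Ftb(\hat{\nlDom})$), and the vector-valued case $n \geq 2$ should be mentioned explicitly (componentwise, exactly as in the paper), though your argument runs verbatim for $\R^n$-valued $g$.
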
 
	\begin{proof}
		Since $g$ and $\Vb$ can be extended by zero to functions in $L^2(\Rd)$ and $C_0^1(\Rd,\Rd)$, respectively, we refer for the case $n=1$ to the proof of \cite[Chapter 10.2.4 Lemma 2.1]{shapes_geometries}. Then, the cases $n \in \N$ with $n \geq 2$ are a direct consequence. 
	\end{proof}
	\label{appendix:proof_AAM}
	\begin{proof}
		Define $f_\shapet(\xb) \defas f_\shape(F_t(\xb))$
		and ${\nabla f_\shapet(\xb) \defas \nabla f_\shape(F_t(\xb))}$.\\
		\textbf{Assumption(H0):}\\
		We prove that $\AAMLagFunc(t,su^t + (1-s)u^0,\advar)$ is absolutely continuous in $s$ by showing that $\tilde{\AAMLagFunc}:[0,1]\rightarrow\R, \tilde{\AAMLagFunc}(s)=\AAMLagFunc(t,s\weakSol^t + (1-s)\weakSol^0,\advar)$ is continuously differentiable.
		Since $\varOp$ is linear in the second argument, we can directly conclude
		\begin{align*}
			\partial_s \tilde{\AAMLagFunc}(s) &= d_u\AAMLagFunc(t,s\weakSol^t + (1-s)\weakSol^0,\advar)[\weakSol^t - \weakSol^0]\\ 
			&= d_u \LtNOp(t, s\weakSol^t + (1-s)\weakSol^0, \advar)[\weakSol^t - \weakSol^0] + d_u \objFunc(t, s\weakSol^t + (1-s)\weakSol^0)[\weakSol^t - \weakSol^0]\\
			&= \LtNOp(t,\weakSol^t - \weakSol^0,\advar) + \int_{\nlDom} \left(s\weakSol^t + (1-s)\weakSol^0 - \data^t \right)\left(\weakSol^t - \weakSol^0\right)\xt ~d\xb \\
			&= \LtNOp(t,\weakSol^t - \weakSol^0,\advar) + s \int_{\nlDom} \left(\weakSol^t - \weakSol^0\right)^2\xt ~d\xb + \int_{\nlDom} \left(\weakSol^0 - \data^t \right) \left( \weakSol^t - \weakSol^0 \right)\xt ~d\xb,
		\end{align*}
		which is clearly continuous in $s$, i.e. $\tilde{\AAMLagFunc}$ is continuously differentiable and the first part of Assumption (H0) holds.
		Furthermore, the second criterion of (H0) is also fulfilled, since
		\begin{align*}
			&\int_0^1 \left| d_u \AAMLagFunc(t,su^t +  (1-s)u^0,\advar)[\tilde{u}] \right| ~ds = \int_0^1 \left| \LtNOp(t,\tilde{u},\advar) + \int_{\nlDom} \left( s(u^t-u^0) + (u^0 - \data^t) \right)\tilde{u}\xt ~d\xb \right| ~ds \\
			&\leq \left| \LtNOp(t, \tilde{u}, \advar) \right| + \frac{1}{2} \left| \int_{\nlDom} \left( u^t - u^0 \right) \tilde{u}\xt ~d\xb \right| + \left| \int_{\nlDom} \left( u^0 - \data^t \right)\tilde{u}\xt ~d\xb \right| < \infty.
		\end{align*}
		Notice that the averaged adjoint equation \eqref{AAE} can be rephrased as
		\begin{align}
			\label{LtNAAE}
			\LtNOp(t,\tilde{u},\advar^t) = - \int_{\nlDom} \left( \frac{1}{2}(u^t + u^0) - \data^t \right) \tilde{u} \xt ~d\xb,
		\end{align}
		where the right-hand side can be interpreted as a linear and continuous operator with regards to $\tilde{u}$ and therefore, equation \eqref{LtNAAE} is  also a Local-to-Nonlocal coupling problem which has a unique solution $\advar^t \in \LtNSpace(\shape)$.\\
		We now show that $\{u^t\}_{t\in[0,T]}$ and $\{\advar^t\}_{t\in[0,T]}$ are bounded, which will be used to prove assumption (H1).  
		Therefore Lemma \ref{lemma:poincare_LtN} yields the existence of a constant $C \in (0, \infty)$ with
		\begin{align}
			&||u^t||_{H^1(\nlDom_l) \times L_c^2(\nlDom_{nl}\cup \nlBound_{nl})}^2 \leq C \LtNOp(t, u^t, u^t) = C | \int_{\nlDom}\weakSol^t f_{\shape}^t \xt d\xb| \leq C ||f_{\shape}^t\xt||_{L^2(\nlDom)} ||u^t||_{L^2(\nlDom)} \nonumber \\
			&\Rightarrow ||u^t||_{H^1(\nlDom_l) \times L_c^2(\nlDom_{nl} \cup \nlBound_{nl})} \leq C_1,\label{eq:weakSol_bounded}
		\end{align}
		where we used in the last step, that $||u^t||_{L^2(\nlDom)} \leq ||u^t||_{H^1(\nlDom_l) \times L_c^2(\nlDom_{nl}\cup \nlBound_{nl})}$ and  $||f_{\shape}^t\xt||_{L^2(\nlDom)} \rightarrow ||f_{\shape}||_{L^2(\nlDom)}$(Lemma \ref{lemma:l2_convergence}), such that consequently $\{||f_{\shape}^t\xt||_{L^2(\nlDom)}\}_{t \in [0,T]}$ is bounded and \eqref{eq:weakSol_bounded} holds for some $C_1 \in (0,\infty)$.
		Additionally, since also $\{||\data^t||_{L^2(\nlDom)}\}_{t \in [0,T]}$ is bounded from above by a positive constant as a consequence of Lemma \ref{lemma:l2_convergence}, we similarly derive the existence of a constant $C_2 \in (0, \infty)$ with
		\begin{align*}
			&||\advar^t||^2_{H^1(\nlDom_l) \times L_c^2(\nlDom_{nl} \cup \nlBound_{nl})} \leq C \left| \left((\frac{1}{2}(u^t + u^0) - \data^t)\xt, \advar^t \right)_{L^2(\nlDom)} \right| \leq \bar{\xi} C ||\frac{1}{2}(u^t + u^0) - \data^t||_{L^2(\nlDom)} ||\advar^t||_{L^2(\nlDom)}\\
			&\Rightarrow ||\advar^t||_{H^1(\nlDom_l) \times L_c^2(\nlDom_{nl} \cup \nlBound_{nl})} \leq C_2.
		\end{align*}
		\textbf{Assumption (H1):}\\
		In this part of the proof we make use of the following observation:\\
		Given a domain $\widehat{\nlDom} \subset \Rd$ for $d \in \N$. Let a sequence $\{t_k\}_{k \in \N} \in [0,T]^{\N}$ with $t_k \rightarrow 0$, if $k \rightarrow \infty$. Moreover, assume $g^t$,$h^{t_k}, h^0 \in L^2(\widehat{\nlDom})$ for $t \in [0,T]$, $k \in \N$ and $h^{t_k} \rightharpoonup h^0 \in L^2(\widehat{\nlDom})$ for $k \rightarrow \infty$ as well as $g^{t} \rightarrow g^0 \in L^2(\widehat{\nlDom})$, if $t \searrow 0$. Then we get
		\begin{align}
			\begin{split}
				\label{weak_convergence_lemma_2}
				&\left| \int_{\widehat{\nlDom}} h^{t_k}g^t - h^0g^0 ~d\xb \right| \leq \left| \int_{\widehat{\nlDom}} h^{t_k}\left( g^t - g^0 \right) ~d\xb \right| + \left| \int_{\widehat{\nlDom}} \left( h^{t_k} - h^0 \right)g^0 ~d\xb \right| \\
				&\leq ||h^{t_k}||_{L^2(\widehat{\nlDom})} ||g^t - g^0||_{L^2(\widehat{\nlDom})} + \left| \int_{\widehat{\nlDom}} \left( h^{t_k} - h^0 \right) g^0 ~d\xb \right| \rightarrow 0 \text{ for } t \searrow 0 \text{ and } k \rightarrow \infty.
			\end{split}
		\end{align}\\
		As a first step we will show that $u^t \rightharpoonup u^0$ and $\advar^t \rightharpoonup \advar^0$ in $H^1(\nlDom_l) \times L_c^2(\nlDom_{nl} \cup \nlBound_{nl})$ and consequently also in $\LtNSpace(\shape)$.\\
		Due to the boundedness of $u^t$ and $\advar^t$ in $H^1(\nlDom_l) \times L_c^2(\nlDom_{nl} \cup \nlBound_{nl})$ for $t \in [0,T]$ there exists for every sequence $\{ t_n \}_{n \in \N}$ with $t_n \rightarrow 0$ for $n \rightarrow \infty$ two subsequences $\{t_{n_k}\}_{k\in \N}$, $\{t_{n_l}\}_{l\in \N}$ and two functions $q_1, q_2 \in H^1(\nlDom_l) \times L_c^2(\nlDom_{nl} \cup \nlBound_{nl})$ such that $u^{t_{n_l}} \rightharpoonup q_1$ and $\advar^{t_{n_k}} \rightharpoonup q_2$ in $H^1(\nlDom_l) \times L_c^2(\nlDom_{nl} \cup \nlBound_{nl})$.\\
		With Lemma \ref{lemma:l2_convergence} it holds that $\kernelt \rightarrow \kernel$ in $L^2(\nlDom_{nl} \times (\nlDom_{nl} \cup \nlBound_{nl}))$ and thus 
		\begin{align*}
			\psi^t(\xb, \yb) \defas \left( \advar(\xb) - \advar(\yb) \right)\kernelt(\xb,\yb)\xt(\xb)\xt(\yb) \rightarrow \left( \advar(\xb) - \advar(\yb) \right)\kernel(\xb,\yb)= \psi^0(\xb,\yb)
		\end{align*}
		pointwise for a.e. $(\xb,\yb) \in \nlDom_{nl} \times (\nlDom_{nl} \cup \nlBound_{nl})$. Since the kernel $\kernelt$ and $\xt$ are bounded, we derive by applying the dominated convergence theorem that $\psi^t \rightarrow \psi^0$ in $L^2(\nlDom_{nl} \times (\nlDom_{nl} \cup \nlBound_{nl}))$.
		Therefore, by employing \eqref{weak_convergence_lemma_2} we can conclude
		\begin{align*}
			&\int_{\nlDom_{nl}} \int_{\nlDom_{nl} \cup \nlBound_{nl}} \left( u^{t_{n_l}}(\xb) - u^{t_{n_l}}(\yb) \right) \left( \advar(\xb) - \advar(\yb) \right) \kernel^{t_{n_l}}(\xb,\yb) \xi^{t_{n_l}}(\xb) \xi^{t_{n_l}}(\yb) ~d\yb d\xb \\
			&= \int_{\nlDom_{nl}} \int_{\nlDom_{nl} \cup \nlBound_{nl}} \left(u^{t_{n_l}}(\xb) - u^{t_{n_l}}(\yb) \right) \psi^t(\xb,\yb) ~d\yb d\xb \rightarrow \int_{\nlDom_{nl}} \int_{\nlDom_{nl} \cup \nlBound_{nl}} \left( q_1(\xb) - q_1(\yb) \right) \psi^0(\xb,\yb) ~d\yb d\xb \\
			&= \int_{\nlDom_{nl}} \int_{\nlDom_{nl} \cup \nlBound_{nl}} \left( q_1(\xb) - q_1(\yb) \right) \left( \advar(\xb) - \advar(\yb) \right)   \kernel(\xb,\yb) ~d\yb ~d\xb.
		\end{align*}
		Moreover, we can directly follow by using \eqref{weak_convergence_lemma_2} and the continuity of $\transformationMatrix(t)$ in $t$ that
		\begin{align*}
			\int_{\nlDom_l} \left( \transformationMatrix(t) \grad \advar, \grad u^{t_{n_l}}\right) ~d\xb \rightarrow \int_{\nlDom_l} \left(\grad \advar, \grad q_1 \right) ~d\xb.
		\end{align*}
		Since $f_{\shape}^t\xt \rightarrow f_{\shape}$ in $L^2(\nlDom)$ due to Lemma \ref{lemma:l2_convergence} we get
		\begin{align*}
			\LtNOp(0,q_1,\advar) = \lim_{l \rightarrow \infty} \LtNOp(t_{n_l}, u^{t_{n_l}}, \advar) = \lim_{l \rightarrow \infty} \int_{\nlDom} f_{\shape}^{t_{n_l}} \xi^{t_{n_l}} \advar ~d\xb = \int_{\nlDom} f_{\shape} \advar ~d\xb.
		\end{align*}
		Because the solution is unique, we derive $u^0 = q_1$ and $u^t \rightharpoonup u^0$. Analogously, one can show
		\begin{align*}
			\LtNOp(0,\hat{u}, q_2) = \lim_{k \rightarrow \infty} \LtNOp(t_{n_k}, \hat{u}, \advar^{t_{n_k}}) &= - \lim_{k \rightarrow \infty} \int_{\nlDom} \left( \frac{1}{2} \left( u^{t_{n_k}} + u^0 \right) - \bar{u}^{t_{n_k}} \right)\hat{u} \xi^{t_{n_k}} ~d\xb \\ 
			&= - \int_{\nlDom} \left( u^0 - \bar{u} \right) \hat{u} ~d\xb,
		\end{align*}
		which yields $q_2=\advar^0$ and $\advar^t \rightharpoonup \advar^0$ for $t \searrow 0$.\\
		In order to show Assumption (H1) we make use of the following observation: The mean value theorem yields the existence of an $s_t \in (0,t)$ such that
		\begin{align*}
			\frac{G(t,u^0,\advar^t) - G(0,u^0,\advar^t)}{t} = \partial_t G(s_t, u^0, \advar^t).
		\end{align*}
		Therefore, if
		\begin{align}
			\label{ass_h1_convergence}
			\lim_{s,t \searrow 0} \partial_t G(t, u^0, \advar^s) = \partial_t G(0,u^0,\advar^0)
		\end{align}
		holds, Assumption (H1) is fulfilled. Since $\partial_t G(t,u^0\advar^s) = \partial_t \LtNOp(t, u^0, \advar^s) - \partial_t \varForce(t,\advar^s) + \partial_t \objFunc(t,u^0)$ we show the convergence \eqref{ass_h1_convergence} in three steps.\\ 
		First, we want to mention that \mbox{$\kernel \in W^{1,\infty}((\completeDom) \times (\completeDom)) \subset W^{1,1}((\completeDom) \times (\completeDom))$}, \\ \mbox{$f_{\shape} \in H^1(\nlDom) \subset W^{1,1}(\nlDom)$}, such that we can apply Lemma \ref{lemma_frechet_diff_bounded_domain} or Corollary \ref{cor:frechet_diff}, and that the functions $\xt$ and $\transformationMatrix(t)$ are continuously differentiable for $t \in [0,T]$. As a result, the following partial derivatives are all derived by 
		by applying the product rule of Fr\'{e}chet derivatives in $L^1(\nlDom)$ and we just have to show the convergence for $t \searrow 0$.
		Thus, we can conclude by again using \eqref{weak_convergence_lemma_2} that
		\begin{align*}
			\partial_t F(t, \advar^s) &= \int_{\nlDom} (\grad f_{\shape}^t)^T \Vb\advar^s \xt + f_{\shape}^t\advar^s \left. \frac{d}{dr} \right|_{r=t^+} \xi^r ~d\xb \\ 
			&\rightarrow \int_{\nlDom} (\grad f_{\shape})^T\Vb\advar^0 + f_{\shape} \advar^0 \di \Vb ~d\xb = \partial_t \varForce(0,\advar^0), 
		\end{align*}
		since $\grad f_{\shape}^t \xt \rightarrow \grad f_{\shape}$ in $L^2(\nlDom,\Rd)$, $f_{\shape}^t \rightarrow f_{\shape}$ in $L^2(\nlDom)$ by utilizing Lemma \ref{lemma:l2_convergence} and $(\grad f_{\shape}^t)^\top \Vb \xt \rightarrow (\grad f_{\shape})^\top \Vb$ in $L^2(\nlDom)$ due to the boundedness of $\Vb$.
		Then, $\lim_{t \searrow 0} \partial_t \objFunc(t,u^0) = \partial_t \objFunc(0,u^0)$ can be proved in a similar manner.
		In the next step, we investigate the partial derivative of the bilinear form $\LtNOp$ as
		\begin{align*}
			&\partial_t \LtNOp(t, u^0, \advar^s)
			= \int\limits_{\nlDom_l} \left( \transformationMatrix'(t)(\xb) \grad u^0(\xb), \grad \advar^s(\xb) \right) ~d\xb \\ 
			&+ \int\limits_{\nlDom_{nl}}  \int\limits_{\nlDom_{nl} \cup \nlBound_{nl}} \left(\advar^s(\xb) - \advar^s(\yb)\right) \left( u^0(\xb) - u^0(\yb) \right) \kernelt(\xb,\yb) \left. \frac{d}{dr} \right|_{r=t^+} \left( \xi^r(\xb) \xi^r(\yb) \right) ~d\yb d\xb \\
			&+ \int\limits_{\nlDom_{nl}} \int\limits_{\nlDom_{nl} \cup \nlBound_{nl}} \left(\advar^s(\xb) - \advar^s(\yb)\right) \left( u^0(\xb) - u^0(\yb) \right) \left( \grad_{\xb} \kernelt(\xb,\yb) \Vb(\xb) + \grad_{\yb} \kernelt(\xb,\yb)\Vb(\yb) \right)\xt(\xb) \xt(\yb) ~d\yb d\xb,
		\end{align*}
		where we used that $t \rightarrow \transformationMatrix(t)$ is continuously differentiable for every $\xb \in \nlDom_l$ with(see \cite[Lemma 2.14]{Sturm_diss})
		\begin{align*}
			\transformationMatrix'(t)(\xb) =& \tr \left( D\Vb(\xb) \left(D\Ftb(\xb)\right)^{-1} \right)\transformationMatrix(t)(\xb) \\
			&-\left(D\Ftb(\xb)\right)^{-1}D\Vb(\xb)\transformationMatrix(t)(\xb) - \left( \left(D\Ftb(\xb)\right)^{-1}D\Vb(\xb)\transformationMatrix(t)(\xb) \right)^\top,
		\end{align*} 
		Therefore, $\transformationMatrix'(t)\grad u^0 \rightarrow\transformationMatrix'(0)\grad u^0$ in $L^2(\nlDom_l,\Rd)$ due to the dominated convergence theorem, and then \eqref{weak_convergence_lemma_2} yields
		\begin{align*}
			\lim_{s,t \searrow 0} \int_{\nlDom_l} \left( \transformationMatrix'(t)(\xb) \grad u^0(\xb), \grad \advar^s(\xb) \right) ~d\xb = \int_{\nlDom_l} \left( \transformationMatrix'(0)(\xb) \grad u^0(\xb), \grad \advar^0(\xb) \right) ~d\xb.
		\end{align*}
		Moreover, since $\kernel^t(\xb, \yb) \rightarrow \kernel(\xb, \yb)$ in $L^2(\nlDom_{nl} \times \left(\nlDom_{nl} \cup \nlBound_{nl}\right))$, $\grad \kernel^t(\xb, \yb) \rightarrow \grad \kernel(\xb, \yb)$ in \\
		$L^2(\nlDom_{nl} \times \left(\nlDom_{nl} \cup \nlBound_{nl}\right),\Rd \times \Rd)$ and $\xt(\xb)$ continuous in $t$ for almost every $\xb \in \nlDom_{nl}$ we get 
		\begin{align*}
			&\left( u^0(\xb) - u^0(\yb) \right) \kernelt(\xb,\yb) \left. \frac{d}{dr} \right|_{r=t^+} \left( \xi^r(\xb) \xi^r(\yb) \right) \\
			&\rightarrow \left( u^0(\xb) - u^0(\yb) \right) \kernel(\xb,\yb) \left( \di \Vb(\xb) + \di \Vb(\yb) \right) \text{ in } L^2(\nlDom_{nl} \times \left(\nlDom_{nl} \cup \nlBound_{nl} \right) ) \text{ for } t \searrow 0 \text{ and} \\ 
			&\left( u^0(\xb) - u^0(\yb) \right) \left( \grad_{\xb} \kernelt(\xb,\yb) \Vb(\xb) + \grad_{\yb} \kernelt(\xb,\yb)\Vb(\yb) \right)\xt(\xb) \xt(\yb) \\
			&\rightarrow \left( u^0(\xb) - u^0(\yb) \right) \left( \grad_{\xb} \kernel(\xb,\yb) \Vb(\xb) + \grad_{\yb} \kernel(\xb,\yb)\Vb(\yb) \right) \text{ in } L^2(\nlDom_{nl} \times \left(\nlDom_{nl} \cup \nlBound_{nl} \right) ) \text{ for } t \searrow 0.
		\end{align*}
		Again using \eqref{weak_convergence_lemma_2} yields $ \lim_{s,t \searrow 0} \partial_t \LtNOp(t,u^0,\advar^s) = \LtNOp(0,u^0,\advar^0)$.
	\end{proof}
\end{document}